\newtheorem{theorem}{Theorem}[section]
\newtheorem{lemma}[theorem]{Lemma}
\newtheorem{corollary}[theorem]{Corollary}
\newtheorem{proposition}[theorem]{Proposition}
\theoremstyle{definition}
\newtheorem{notation}[theorem]{Notation}
\newtheorem{remark}[theorem]{Remark}
\newcommand{\Lc}{\mathcal{L}}
\newcommand{\OO}{\mathcal{O}}
\newcommand{\Sc}{\mathcal{S}}
\newcommand{\TT}{\mathcal{T}}
\newcommand{\C}{\mathscr{C}}
\newcommand{\M}{\mathscr{M}}
\newcommand{\N}{\mathscr{N}}
\newcommand{\Os}{\mathscr{O}}
\newcommand{\Ls}{\mathscr{L}}
\newcommand{\PG}{\mathrm{PG}}
\newcommand{\Tr}{\mathrm{T}}
\newcommand{\EnG}{\mathrm{En\Gamma}}
\newcommand{\MM}{\mathbf{M}}
\newcommand{\Pf}{\mathbf{P}}
\newcommand{\F}{\mathbb{F}}
\newcommand{\Pb}{\mathbb{P}}
\newcommand{\Lb}{\mathbb{L}}
\newcommand{\Nb}{\mathbb{N}}
\newcommand{\Ob}{\mathbb{O}}
\newcommand{\Wb}{\mathbb{W}}
\newcommand{\A}{\mathfrak{A}}
\newcommand{\Mk}{\mathfrak{M}}
\newcommand{\Nk}{\mathfrak{N}}
\newcommand{\Pk}{\mathfrak{P}}
\newcommand{\pk}{\mathfrak{p}}
\newcommand{\Rk}{\mathfrak{R}}
\newcommand{\T}{\text}
\newcommand{\db}{\displaybreak[3]}
\begin{document}
\title{Further results on orbits and incidence matrices for the class $\OO_6$ of lines external to the twisted cubic in
$\PG(3,q)$
\date{}
}
\maketitle
\begin{center}
{\sc Alexander A. Davydov}\\
 {\sc\small Kharkevich Institute for Information Transmission Problems}\\
 {\sc\small Russian Academy of Sciences,
Moscow, 127051, Russian Federation}\\
 \emph{E-mail address:} alexander.davydov121@gmail.com\medskip\\
 {\sc Stefano Marcugini and
 Fernanda Pambianco }\\
 {\sc\small Department of  Mathematics  and Computer Science,  Perugia University,}\\
 {\sc\small Perugia, 06123, Italy}\\
 \emph{E-mail address:} \{stefano.marcugini, fernanda.pambianco\}@unipg.it
\end{center}

\begin{abstract}
In the literature, lines of the projective space $\mathrm{PG}(3,q)$ are partitioned into classes, each of which is a union of line orbits under the stabilizer group of the twisted cubic. The least studied class is named $\mathcal{O}_6$. This class contains lines external to the twisted cubic which are not its chords or axes and do not lie in any of its osculating planes. For even and odd $q$, we propose  a new family of orbits of $\mathcal{O}_6$ and investigate in detail their stabilizer groups and the corresponding submatrices of the point-line and plane-line incidence matrices. To obtain these submatrices, we explored the number of solutions of cubic and quartic equations connected with intersections of lines (including the tangents to the twisted cubic), points, and planes in $\mathrm{PG}(3,q)$.
\end{abstract}

\textbf{Keywords:} Twisted cubic, Projective space, Incidence matrix, Orbits of lines

\textbf{Mathematics Subject Classification (2010).} 51E21, 51E22

\section{Introduction}
In the three-dimensional projective space $\PG(3,q)$ over a Galois field $\F_{q}$ with $q$ elements, the normal rational curve $\mathscr{C}$,  named twisted cubic, has as many as $q+1$ points. Up to a change of the projective frame of $\PG(3,q)$, these points are  $P_t=(t^3,t^2,t,1)$, $t\in \F_{q}$, together with $P_\infty=(1,0,0,0)$. In particular, they form a complete $(q+1)$-arc in $\PG(3,q)$. The twisted cubic has many interesting properties and is connected with distinct combinatorial and applied problems, which led this curve to be widely studied, see for instance \cite{BallicoCos,BonPolvTwCub,BrHirsTwCub,CKS,CLPolvT_Spr,CosHirsStTwCub,GiulVincTwCub,Hirs_PG3q,KorchLanzSon,LawLiPav,LunarPolv,ZanZuan2010}.
A novel application of twisted cubic aimed at the construction of covering codes has been the motivation for the study of certain submatrices of the point-plane incidence matrix of $PG(3,q)$ arising from the action of the stabilizer group $G_q\cong \mathrm{PGL}(2,q)$ of $\mathscr{C}$ in $\PG(3,q)$.
The investigation, based on the known classification of the point and plane orbits of $G_q$ given in \cite{Hirs_PG3q}, was initiated by D. Bartoli and the present authors in 2020 \cite{BDMP-TwCub} and produced optimal multiple covering codes. The results in \cite{BDMP-TwCub} were also an important ingredient to classify the cosets of the $[ q+1, q-3,5]_q 3$ generalized doubly-extended Reed-Solomon code of codimension $4$ by means of their weight distributions \cite{DMP_RSCoset}.

For the study of the plane-line and the point-line incidence matrices, an explicit description of line orbits is necessary. In \cite{Hirs_PG3q}, a partition of the lines in $\PG(3,q)$ into classes is given, each of which is a union of line orbits under $G_q$. Apart from one class denoted by $\OO_6$,  containing lines external to the twisted cubic that are not its chords or axes and do not lie in its osculating planes,
the number and the structure of the orbits forming those unions are simultaneously and independently obtained by distinct methods in
 \cite{DMP_OrbLineMJOM}  (for all $q\ge2$),  \cite{BlokPelSzo} (for all $q\ge23$), and \cite{GulLav}  (for finite fields of characteristic $> 3$); see also the references therein.

The results  of \cite{BlokPelSzo,DMP_OrbLineMJOM,GulLav} are collected in \cite[Sect. 2.2, Tab. 1]{DMP_PointLineInc} where texts from arXiv.org corresponding to \cite{BlokPelSzo,DMP_OrbLineMJOM,GulLav} are used.

Using the representation of the line orbits in \cite{DMP_OrbLineMJOM}, for all $q\ge2$ and apart from the lines in class $\OO_6$, the \emph{point-line} and \emph{plane-line} incidence matrices of $\PG(3,q)$ are obtained in \cite{DMP_PointLineInc,DMP_PlLineIncJG}. 
In these incidence matrices, submatrices correspond to the orbits. In \cite{DMP_PlLineIncJG}, for the
submatrices, the number of  distinct lines lying in distinct planes and, conversely, the number of distinct
 planes through distinct lines are obtained. In \cite{DMP_PointLineInc},
 the number of distinct lines through distinct points and, vice versa, the number of distinct
 points lying on distinct lines are obtained. (By ``distinct planes'' we mean
 ``planes from distinct orbits'', and similarly for points and lines.)

In \cite{GulLav}, apart from the lines in class $\OO_6$, for odd $q\not\equiv0\pmod3$, the numbers of distinct planes through distinct lines (called ``the plane orbit distribution of a line") and the numbers of distinct points lying on distinct lines
(called ``the point orbit distribution of a line") in of $\PG(3,q)$ are obtained.
For finite fields of characteristic $> 3$, the results of \cite{GulLav} on ``the plane orbit distribution of a line" and ``the point orbit distribution of a line'' are in accordance with those from \cite{DMP_PointLineInc,DMP_PlLineIncJG} on the point-line and the plane-line incidence matrices.

In \cite{DMP_OrbLineMJOM} stabilizer groups for the considered orbits are obtained and described in detail whereas in \cite{BlokPelSzo,GulLav} the stabilizer groups are not considered. Also, in \cite[Th. 8.1, Conj. 8.2]{DMP_OrbLineMJOM}, for the all fields $\F_q$, $q\ge5$,  a detailed conjecture on the sizes and the number of line orbits in the class  $\OO_6$ is formulated; for $5\le q\le 37$ and $q=64$ the conjecture has been proved by an exhaustive computer search.

In  $\PG(3,q)$, for $q=2^n$, $n\ge3$,
the $(q+1)$-arc    $\mathcal{A}=\{(1,t,t^{2^h},t^{2^h+1})\,|\,t\in\F_q^+\}$, $\F_q^+=\F_q\cup\{\infty\}$, with $gcd(n,h)=1$ (twisted cubic for $h=1$), has been considered  in a recent paper \cite{CerPavDM},
where it is shown that the orbits of points and of planes
under the projective stabilizer $G_h$ of $\mathcal{A}$ are similar to those under $G_1$ described in \cite{Hirs_PG3q}; moreover, the point-plane incidence matrix with
respect to $G_h$-orbits  mirrors the case h=1 described in  \cite{BDMP-TwCub}. In \cite{CerPavDM}, it is also proved for even $q$,   our conjecture of \cite[Th. 8.1, Conj. 8.2]{DMP_OrbLineMJOM}.

In \cite{DMP_OrbLineO6MJOM}, for all even and odd $q$, a so-called family $\Os_\mu$ of line orbits of the class $\OO_6$ is obtained using a line family, called \emph{$\ell_\mu$-lines}, where a parameter $\mu$ runs over $\F_q^*\setminus\{1\}$, $\F_{q}^*=\F_{q}\setminus\{0\}$. Also, one more  orbit $\Os_\Lc$, based on a line $\Lc$ with another description, is given. The orbits $\Os_\mu$ and $\Os_\Lc$ are based on the analysis of the stabilizer groups of the corresponding lines. These orbits include an essential part of all $\OO_6$ orbits; e.g. they include about one-half and  one-third of all lines of $\OO_6$ for $q$ even and for $q\equiv 0 \pmod3$, respectively.

 In \cite{DMP_IncO6JG}, using the properties of the orbits $\Os_\mu$ and $\Os_\Lc$ from \cite{DMP_OrbLineO6MJOM}, we determine all the plane-line and point-line incidence matrices connected with these orbits.

 In a quite recent paper \cite{KaPatPradOrbits}, for a field $\F_q$ of characteristic $>3$, our conjecture of \cite[Th. 8.1, Conj. 8.2]{DMP_OrbLineMJOM} on the sizes and the number of orbits in the class  $\OO_6$ has been proved. For it,  the open problem of classifying binary quartic forms over $\F_q$
into $G_q$-orbits is solved and used. Also, the Pl\"{u}cker embedding for the Klein quadric is applied. So, in \cite{KaPatPradOrbits}, the methods and approaches are different from our published articles \cite{DMP_PointLineInc,DMP_PlLineIncJG,DMP_OrbLineMJOM,DMP_OrbLineO6MJOM,DMP_IncO6JG} and from this paper. Also, in \cite{KaPatPradOrbits}, unlike this paper, line orbits over the fields $\F_q$ of characteristic 2 are not investigated and the incidence matrices are not considered.

In this paper, the first version of which can be found in
\cite{DMP_arXivLrho2023}, we continue and develop approaches of \cite{DMP_OrbLineO6MJOM,DMP_IncO6JG}. We propose a new family of lines $\Ls_\rho$, where $\rho$ is a parameter running over $\F_q^*$. This family is a generalization of the line $\Lc$ from \cite{DMP_OrbLineO6MJOM}. For even and odd $q\not\equiv 0 \pmod3$ the lines $\Ls_\rho$ belong to the class $\OO_6$. The detailed investigation of the stabilizer groups of the lines $\Ls_\rho$ for all $q$ and $\rho$ allows  us to calculate the sizes of the orbits under $G_q$, containing the lines $\Ls_\rho$. Also, the parameters of the point-line and plane-line incidence submatrices are being considered with the help of the research of the number of solutions of cubic and quartic equations connected with intersections of the lines $\Ls_\rho$ and distinct planes, points, and the tangents to the twisted cubic. Formulas for the numbers
of the solutions are being described. It is shown when the lines $\Ls_\rho$ generate new orbits in comparison with the orbits $\Os_\mu$ \cite{DMP_OrbLineO6MJOM}.

The paper is organized as follows. Section \ref{sec2:prelim} is background and preliminaries. In Section \ref{sec3:Lrho}, the new family of lines $\Ls_\rho$ from the class $\OO_6$ is described. Some useful relations are given in Section \ref{sec:useful}. In Section \ref{sec5:intersec}, we investigate intersections of $\Ls_{\rho}$-lines and tangents. In Section \ref{sec6}, the stabilizers of $\Ls_\rho$-lines and the sizes of the orbits are obtained. Cubic equations and incidence matrices for even and odd $q$ are given in Sections \ref{sec7:cubEqEven} and \ref{sec8:cubEqIncOdd q}, respectively. Orbits, generated by $\Ls_{\rho}$-lines, for even and odd $q$, are considered in Section \ref{sec8:orbits even} and \ref{sec10:orbits odd q}, respectively.\\
An extended abstract of this paper appeared in WCC 2024 Proceedings \cite{WCC2024}.

\section{Preliminaries}\label{sec2:prelim}
\subsection{Twisted cubic in $\boldsymbol{\PG(3,q)}$}\label{subsec21:twcub}
Here we cite some results from \cite{Hirs_PG3q,Hirs_PGFF} useful in this paper.

Let $\boldsymbol{\pi}(c_0,c_1,c_2,c_3)$ be the plane of $\PG(3,q)$ with equation
$c_0x_0+c_1x_1+c_2x_2+c_3x_3=0$, $c_i\in\F_q$.
Let $\Pf(x_0,x_1,x_2,x_3)$ be a point of $\PG(3,q)$  with homogeneous coordinates $x_i\in\F_{q}$.
Let  $P(t)$ be a point of $\PG(3,q)$ with
\begin{equation}\label{eq21:Pt}
  t\in\F_q^+,~ P(t)=\Pf(t^3,t^2,t,1) \T{ if }t\in\F_q,~P(\infty)=\Pf(1,0,0,0).
\end{equation}

Let $\C\subset\PG(3,q)$ be the \emph{twisted cubic} consisting of $q+1$ points no four of which are coplanar.
We consider $\C$ in the canonical form
\begin{equation}\label{eq21:cubic}
\C=\{P(t)\,|\,t\in\F_q^+\}.
\end{equation}

A \emph{chord} of $\C$ through the points $P(t_1)$ and $P(t_2)$ is a line joining either a pair of real points of
$\C$, possibly coincident, or a pair of complex conjugate points. Its coordinate vector is
$ L_{\T{ch}}=(a_2^2, a_1 a_2, a_1^2-a_2, a_2, -a_1, 1)$,
where $a_1 =t_ 1 + t_ 2$, $a_2 =t_1 t_ 2$. If $x^2- a_1x + a_2$ has 2, 1, or 0  roots in $\F_q$ then we have, respectively, a real chord, a tangent, or an imaginary chord.
The \emph{tangent} $\TT_t$ to $\C$ at the point $P(t)$ has coordinate vector
\begin{align}\label{eq21:tang}
& L^{\T{tang}}_t=(t^4, 2t^3, 3t^2, t^2, -2t, 1),~t\in\F_q;~L^{\T{tang}}_\infty=(1, 0, 0, 0, 0, 0).
\end{align}

The \emph{osculating plane} $\pi_\T{osc}(t)$ in the  point $P(t)\in\C$ has the form
\begin{equation}\label{eq21:osc_plane}
\pi_\T{osc}(t)=\boldsymbol{\pi}(1,-3t,3t^2,-t^3)\T{ if }t\in\F_q; ~\pi_\T{osc}(\infty)=\boldsymbol{\pi}(0,0,0,1).
\end{equation}
 The $q+1$ osculating planes form the osculating developable $\Gamma$ to $\C$, that is a \emph{pencil of planes} for $q\equiv0\pmod3$ or a \emph{cubic developable} for $q\not\equiv0\pmod3$.

The null polarity $\A$ \cite[Theorem~21.1.2]{Hirs_PG3q}, \cite[Sections 2.1.5, 5.3]{Hirs_PGFF} is given by
\begin{equation}\label{eq21:null_pol}
\Pf(x_0,x_1,x_2,x_3)\A=\boldsymbol{\pi}(x_3,-3x_2,3x_1,-x_0),~q\not\equiv0\pmod3.
\end{equation}

For $q\not\equiv0\pmod3$, dual to the chords of $\C$ are the axes of $\Gamma$. An \emph{axis} of $\Gamma$ is a line of $\PG(3,q)$ which is the intersection of a pair of real planes or complex conjugate planes of~$\Gamma$. In the last  case it is an \emph{imaginary axis}. If the real planes are distinct it is a \emph{real axis}; if they coincide with each other, it is a tangent to $\C$. An axis has coordinate vector $L_{\T{ax}}=(\beta_2^2, \beta_1 \beta_2, 3\beta_2, (\beta_1^2-\beta_2)/3, -\beta_1,1)$, $\beta_i\in\F_q^+$, $q\not\equiv0\pmod3$.

\begin{notation}\label{notation21}
Throughout the paper, we consider $q\equiv\xi\pmod3$ with $\xi\in\{-1,0,1\}$. Many values depend on $\xi$ or make sense only for specific $\xi$.
If it is not clear by the context, we note this by remarks.
The following notation is used.
\begin{align*}
  &G_q && \T{the group of projectivities in } \PG(3,q) \T{ fixing }\C;\db  \\
&\#S&&\T{the cardinality of a set }S;\db\\
&\overline{AB}&&\T{the line through the points $A$ and }B;\db\\
&\triangleq&&\T{the sign ``equality by definition''}.\db\\
&&&\textbf{Types $\pi$ of planes:}\db\\
&\Gamma\T{-plane}  &&\T{an osculating plane of }\Gamma;\db \\
&d_\C\T{-plane}&&\T{a plane containing \emph{exactly} $d_\C$ distinct points of }\C,~d_\C\in\{0,2,3\};\db \\
&\overline{1_\C}\T{-plane}&&\T{a plane not in $\Gamma$ containing \emph{exactly} 1 point of }\C;\db \\
&\Pk&&\T{the list of possible types $\pi$ of planes},~\Pk\triangleq\{\Gamma,2_\C,3_\C,\overline{1_\C},0_\C\};\db\\
&\pi\T{-plane}&&\T{a plane of the type }\pi\in\Pk.\db\\
&&&\textbf{Types $\pk$ of points, $\xi\ne0$:}\db\\
&\C\T{-point}&&\T{a point  of }\C;\db\\
&\mu_\Gamma\T{-point}&&\T{a point  off $\C$ lying on \emph{exactly} $\mu$ distinct osculating planes},\db\\
&&&\mu_\Gamma\in\{0_\Gamma,1_\Gamma,3_\Gamma\}\db\\
&\Tr\T{-point}&&\T{a point  off $\C$  on a tangent to $\C$};\db\\
&\Mk&&\T{the list of possible types $\pk$ of points},\Mk\triangleq\{\C,0_\Gamma,1_\Gamma,3_\Gamma,\Tr\};\db\\
&\pk\T{-point}&&\T{a point of the type }\pk\in\Mk.\db\\
&&&\textbf{Orbits under $G_q$:}\db\\
&\N_\pi&&\T{the orbit of $\pi$-planes under }G_q,~\pi\in\Pk;\db\\
&\M_\pk&&\T{the orbit of $\pk$-points under }G_q,~\pk\in\Mk;\db\\
&\EnG\T{-line}&&\T{a line, external to the cubic $\C$, not in a $\Gamma$-plane, that is neither}\db\\
&&&\T{a chord nor an axis, see \cite[Lemma 21.1.4]{Hirs_PG3q} and its context;}\db \\
&\OO_6=\OO_{\EnG}&&\T{the union (class) of all orbits of $\EnG$-lines}.
\end{align*}
\end{notation}

\begin{theorem}\label{th21:Hirs}
\emph{\cite[Chapter 21]{Hirs_PG3q}} The following properties of the twisted cubic $\C$ hold:
\begin{description}
  \item[(i)] The group $G_q$ acts triply transitively on $\C$;  $G_q\cong PGL(2,q)$ for $q\ge5$.
    A matrix $\MM$ corresponding to a projectivity of $G_q$ has the general form
  \begin{align}\label{eq21:M}
& \mathbf{M}=\left[
 \begin{array}{cccc}
 a^3&a^2c&ac^2&c^3\\
 3a^2b&a^2d+2abc&bc^2+2acd&3c^2d\\
 3ab^2&b^2c+2abd&ad^2+2bcd&3cd^2\\
 b^3&b^2d&bd^2&d^3
 \end{array}
  \right],a,b,c,d\in\F_q, ad-bc\ne0.
\end{align}

  \item[(ii)] Under $G_q$, $q\ge5$, there are the following five orbits $\N_\pi$ of planes:
\begin{align*}
   &\N_1=\N_\Gamma=\{\Gamma\T{-planes}\},\,\#\N_\Gamma=q+1;~\N_{2}=\N_{2_\C}=\{2_\C\T{-planes}\},\,\#\N_{2_\C}=q^2+q;\db \\
 &\N_{3}=\N_{3_\C}=\{3_\C\T{-planes}\},~  \#\N_{3_\C}=(q^3-q)/6;~\N_{4}=\N_{\overline{1_\C}}=\{\overline{1_\C}\T{-planes}\},\db\\
 &\#\N_{\overline{1_\C}}=(q^3-q)/2;~\N_{5}=\N_{0_\C}=\{0_\C\T{-planes}\},~\#\N_{0_\C}=(q^3-q)/3.
 \end{align*}

  \item[(iii)] For $q\not\equiv0\pmod 3$, there are the following five orbits $\M_j$ of points:
  \begin{align}
&\M_1=\M_\C=\{\C\T{-points}\},~\M_2=\M_\Tr=\{\Tr\T{-points}\},\db\notag\\
&\M_3=\M_{3_\Gamma}=\{3_\Gamma\T{-points}\},~\M_4=\M_{1_\Gamma}=\{1_\Gamma\T{-points}\},~\M_5=\M_{0_\Gamma}=\{0_\Gamma\T{-points}\}.\db\notag\\
&  \M_j\A=\N_j,~\#\M_j=\#\N_j,~j=1,\ldots,5; \label{eq21:sizeMj=Nj}\db\\
&\M_\C\A=\N_\Gamma,~\M_\Tr\A=\N_{2_\C}, ~
\M_{3_\Gamma}\A=\N_{3_\C},~\M_{1_\Gamma}\A=\N_{\overline{1_\C}},~
\M_{0_\Gamma}\A=\N_{0_\C}.\label{eq21:pi(pk)}
\end{align}

 \item[(iv)]  The lines of $\PG(3,q)$ can be partitioned into classes called $\OO_i$ and $\OO'_i=\OO_i\A$, each of which is a union of orbits under $G_q$. The full list of the classes can be found in \emph{\cite[Lemma 21.1.4]{Hirs_PG3q}}. In particular, for all $q$, there is the class $\OO_6=\OO_{\EnG}=\{\EnG\T{-lines}\}$, $\#\OO_6=\#\OO_{\EnG}=(q^2-q)(q^2-1)$. If  $q\not\equiv0\pmod3,$ we have $\OO_6=\OO'_6=\OO_6\A$.
\end{description}
\end{theorem}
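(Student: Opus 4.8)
The plan is to reduce all four parts to the action of $\mathrm{PGL}(2,q)$ on binary forms over $\F_q$, exploiting that $\C$ is the rational normal cubic. For (i), I would first note that the parametrization $P(t)=(t^3,t^2,t,1)$ realizes $\C$ as the image of $\Pb^1\cong\F_q^+$ under the map sending a parameter to the vector of cubic monomials. Any projectivity of $\PG(3,q)$ fixing $\C$ permutes its $q+1$ points and hence induces a linear-fractional map of the parameter $t$, while conversely every $t\mapsto(at+b)/(ct+d)$ extends to a unique projectivity fixing $\C$; substituting this into $t^3,t^2,t,1$ and homogenizing yields exactly the third symmetric power of $\left[\begin{smallmatrix}a&b\\c&d\end{smallmatrix}\right]$, whose entries (with the multinomial factors $3$) are those displayed in (2.5). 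Triple transitivity is then inherited from the sharp $3$-transitivity of $\mathrm{PGL}(2,q)$ on $\Pb^1(\F_q)$, and since the extension map is a bijective homomorphism for $q\ge5$ we get $G_q\cong\mathrm{PGL}(2,q)$.

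For (ii), the key observation is that $\boldsymbol{\pi}(c_0,c_1,c_2,c_3)$ meets $\C$ in the points $P(t)$ with $c_0t^3+c_1t^2+c_2t+c_3=0$ (together with $P(\infty)$ when $c_0=0$), so plane--cubic incidence is encoded by a binary cubic form on $\Pb^1$. I would classify these forms by factorization type over $\F_q$ --- triple root, double-plus-simple root, three distinct roots, one root plus an irreducible quadratic, and irreducible cubic --- matching the five plane types $\Gamma,2_\C,3_\C,\overline{1_\C},0_\C$ respectively. Counting forms of each type up to scalars gives the sizes $q+1$, $(q+1)q$, $\binom{q+1}{3}=(q^3-q)/6$, $(q+1)(q^2-q)/2=(q^3-q)/2$ and $(q^3-q)/3$, which sum to the total $q^3+q^2+q+1$ of all planes and hence partition them. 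It then remains to prove $G_q$ is transitive on each type; for the split types this is immediate from sharp $3$-transitivity, and orbit--stabilizer (stabilizers of order $q(q-1)$, $q-1$, $6$ for $\Gamma,2_\C,3_\C$) re-derives the first three sizes. The hard part, and the main obstacle, is transitivity for the non-split classes $\overline{1_\C}$ and $0_\C$, whose configurations involve Galois-conjugate (``imaginary'') points in $\F_{q^2}$ or $\F_{q^3}$: here I would argue by Galois descent, observing that the unique element of $\mathrm{PGL}(2,\overline{\F_q})$ sending one root triple $\{\alpha,\alpha^q,\alpha^{q^2}\}$ to another commutes with Frobenius, is therefore defined over $\F_q$, and lies in $G_q$; the same argument with a conjugate pair in $\F_{q^2}$ settles $\overline{1_\C}$, and the induced stabilizers (cyclic of order $3$, resp.\ order $2$) confirm the last two sizes.

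For (iii), I assume $q\not\equiv0\pmod3$ so that the null polarity $\A$ of (2.4) exists. A direct substitution shows $P(t)\A=\pi_{\T{osc}}(t)$, so $\A$ carries $\C$ onto the osculating developable $\Gamma$; moreover $\A$ is the polarity canonically attached to $\C$, so it is normalized by $G_q$ (equivalently $\MM^{\top}S\,\MM$ is a scalar multiple of the skew matrix $S$ of $\A$ for every matrix $\MM$ of the form (2.5)). Being an incidence-reversing bijection commuting with $G_q$, $\A$ maps the point orbits bijectively onto the plane orbits preserving cardinalities, which gives (2.6). The specific matching (2.7) follows from the duality ``the number of osculating planes through $P$ equals the number of points of $\C$ on $P\A$'', which pairs $3_\Gamma\leftrightarrow3_\C$, $1_\Gamma\leftrightarrow\overline{1_\C}$, $0_\Gamma\leftrightarrow0_\C$, together with the tangency refinement pairing $\C\leftrightarrow\Gamma$ and $\Tr\leftrightarrow2_\C$; since the five sizes are pairwise distinct, this matching is forced and $\#\M_j=\#\N_j$.

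Finally, for (iv) I would enumerate the line classes by their incidence with $\C$ and $\Gamma$ --- tangents ($q+1$), real chords ($\binom{q+1}{2}$), imaginary chords ($(q^2-q)/2$), the dual real and imaginary axes when $q\not\equiv0\pmod3$, and the lines contained in osculating planes --- reproducing the list of \cite[Lemma 21.1.4]{Hirs_PG3q}, and obtain $\OO_6$ as the complement. Subtracting all special lines from the total $(q^2+1)(q^2+q+1)$ and correcting for overlaps (lines that are special in two ways, e.g.\ tangents counted among both chords and axes, or axes lying in two osculating planes) yields $\#\OO_6=(q^2-q)(q^2-1)$; this inclusion--exclusion bookkeeping is the chief nuisance of the part. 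Self-duality for $q\not\equiv0\pmod3$ is then structural: each defining condition of an $\EnG$-line is self-dual under $\A$, since ``external to $\C$'' dualizes to ``in no $\Gamma$-plane'' (as $\A$ sends $\C$-points to $\Gamma$-planes), ``chord'' (join of two $\C$-points) dualizes to ``axis'' (meet of two osculating planes), and ``neither chord nor axis'' is manifestly self-dual; hence $\OO_6\A=\OO_6$, that is $\OO_6=\OO'_6$.
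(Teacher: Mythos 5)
This is a background theorem that the paper itself does not prove: it is quoted, citation in the header, from \cite[Chapter 21]{Hirs_PG3q}, so there is no internal proof to compare your attempt against, and your sketch has to be judged as a reconstruction of the classical argument. In outline it follows the standard route of the cited source: $G_q$ as the third symmetric power of $\mathrm{PGL}(2,q)$, plane--cubic incidence encoded by binary cubic forms sorted by factorization type, sharp $3$-transitivity plus Galois descent for the two non-split plane types, the null polarity $\A$ as a $G_q$-equivariant incidence-reversing bijection for (iii), and a complement count for (iv). Much of this is sound: the five form counts in (ii) do sum to $q^3+q^2+q+1$, the descent argument for transitivity on $\N_{\overline{1_\C}}$ and $\N_{0_\C}$ is correct (the unique M\"obius map over $\overline{\F}_q$ matching Galois-stable configurations commutes with Frobenius, hence is rational), and the duality argument in (iii) does force the stated matching of point and plane orbits.

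Two steps, however, have genuine gaps. First, in (i) you assert that the extension map $\mathrm{PGL}(2,q)\to G_q$ is a bijection for $q\ge5$, but your argument never uses the hypothesis $q\ge5$: injectivity and the fact that every M\"obius map extends to a projectivity fixing $\C$ are the easy half, and the entire content of the restriction is \emph{surjectivity}, i.e.\ that every projectivity of $\PG(3,q)$ fixing $\C$ setwise is induced by a fractional linear map of the parameter. This is false for $q\le4$; e.g.\ for $q=4$ the curve $\C$ is a $5$-point frame, the setwise stabilizer induces all of $S_5$ on it, and $\#G_4=120>60=\#\mathrm{PGL}(2,4)$, so an argument in which $q\ge5$ never enters cannot be complete. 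Second, in (iv) your list of special line classes (tangents, real and imaginary chords, real and imaginary axes, lines in $\Gamma$-planes) omits the non-tangent unisecants that do not lie in $\Gamma$-planes: a line meeting $\C$ in exactly one point is neither a chord, nor an axis, nor external, and only $q^2+q$ of these unisecants lie in $\Gamma$-planes, leaving $q^3-q$ of them unaccounted for. With your list as written the complement comes out to $q^4-q^2$, which exceeds the required $(q^2-q)(q^2-1)$ by exactly those $q^3-q$ lines. In addition, the theorem asserts $\#\OO_6=(q^2-q)(q^2-1)$ for \emph{all} $q$, while your enumeration relies on axes and on each pair of osculating planes meeting in a line not shared by a third, which hold only for $q\not\equiv0\pmod3$; for $q\equiv0\pmod3$ the developable $\Gamma$ is a pencil of planes, the class list of \cite[Lemma 21.1.4]{Hirs_PG3q} is different, and that case needs a separate count. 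Your self-duality argument $\OO_6\A=\OO_6$ for $q\not\equiv0\pmod3$ is fine.
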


\subsection{An $\boldsymbol{\EnG}$-line $\boldsymbol{\Lc}$ and its orbit $\boldsymbol{\Os_\Lc}$, $\boldsymbol{q\not\equiv0\pmod3}$}\label{subsec22:OL}
Here we present some results from \cite{DMP_OrbLineO6MJOM} useful in this paper.

Let  $Q_\beta=\Pf(1,0,\beta,1), ~\beta\in\F_q^+$, be a point of $\PG(3,q)$. We consider the line
\begin{equation}\label{eq22:ell0infdef}
 \Lc = \overline{Q_0Q_\infty} =\overline{\Pf(1,0,0,1)\Pf(0,0,1,0)}= \{\Pf(0,0,1,0), \Pf(1,0,\beta,1)\,|\,\beta\in\F_q\}.
\end{equation}
Let $\Os_\Lc$ be the orbit of the line $\Lc$ under $G_q$.

\begin{theorem}\label{th22:orbitell0inf}
\emph{\cite[Section 3]{DMP_OrbLineO6MJOM}}
  Let $q\equiv\xi\pmod3$, $\xi\ne0$. We have $\Os_\Lc\subset\OO_6=\OO_{\EnG}$, i.e. the lines of $\Os_\Lc$ are $\EnG$-lines.
 The orbit $\Os_\Lc$ has size
\begin{align*}
 \#\Os_\Lc=\left\{\begin{array}{@{\,}lccl}
                          (q^3-q)/3 & \T{if} & \xi=1, &q\T{ is even or $2$ is a non-cube in }\F_q;\\
                          (q^3-q)/12&\T{if} &\xi=1,  &q \T{ is odd and $2$ is a cube in }\F_q; \\
                          q^3-q& \T{if}&\xi=-1,  &q\T{ is even}; \\
                          (q^3-q)/2& \T{if}&\xi=-1,  &q\T{ is odd}.
                        \end{array}
 \right.
\end{align*}
\end{theorem}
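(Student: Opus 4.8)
The plan is to compute the orbit size via the orbit-stabilizer theorem, so the entire problem reduces to determining the order of the stabilizer $H_\Lc$ of the line $\Lc$ in $G_q$. Since $\#G_q = \#\mathrm{PGL}(2,q) = q^3-q$ for $q\ge 5$, and $\#\Os_\Lc = \#G_q / \#H_\Lc$, the four cases in the statement correspond exactly to $\#H_\Lc \in \{3, 12, 1, 2\}$. First I would set up the action explicitly: a projectivity of $G_q$ is given by the matrix $\MM$ in \eqref{eq21:M} depending on $(a,b,c,d)$ with $ad-bc\ne 0$, and I would determine precisely which matrices $\MM$ map the line $\Lc=\overline{\Pf(1,0,0,1)\Pf(0,0,1,0)}$ to itself as a set. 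Using the two spanning points in \eqref{eq22:ell0infdef}, the stabilizer condition is that $\MM$ sends each of $\Pf(1,0,0,1)$ and $\Pf(0,0,1,0)$ into the point set $\{\Pf(0,0,1,0),\Pf(1,0,\beta,1)\mid \beta\in\F_q\}$; equivalently, the two image points must again span $\Lc$.

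Next I would translate this geometric condition into algebraic constraints on $(a,b,c,d)$. Applying $\MM$ to $\Pf(0,0,1,0)$ reads off the third column of $\MM$, namely $\Pf(ac^2,\,bc^2+2acd,\,ad^2+2bcd,\,cd^2)$, and this must lie on $\Lc$; applying $\MM$ to $\Pf(1,0,0,1)$ gives the sum of the first and fourth columns. Requiring both images to lie in the plane-span of $\Lc$ produces a small system of polynomial equations in $a,b,c,d$. I expect these to force several coordinates to vanish (this is where the structure of $\Lc$ passing through the two special points $Q_0,Q_\infty$ is used), cutting the four-parameter family $(a,b,c,d)$ down to a low-dimensional set, after which one counts the projective solutions in $\F_q$.

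The case split on $\xi$, parity, and whether $2$ is a cube strongly suggests that the stabilizer-counting ultimately hinges on \textbf{the number of roots in $\F_q$ of a fixed low-degree equation}, plausibly something of the shape $z^3 = 2$ together with an auxiliary quadratic condition tied to $\xi = q \bmod 3$. Concretely, the factor of $3$ versus $1$ between the $\xi=1$ and $\xi=-1$ even cases, and the factor of $2$ and the cube/non-cube dichotomy in the odd cases, are the fingerprints of counting cube roots of a specific constant (controlled by whether $x\mapsto x^3$ is a bijection on $\F_q^*$, i.e. by $\xi$) and square roots (controlled by parity). So after reducing to the algebraic system, I would factor the resulting defining polynomial, identify the constant whose cube/square roots are being counted, and invoke the standard facts: cubing is a bijection on $\F_q^*$ iff $q\equiv -1\pmod 3$, while for $q\equiv 1\pmod 3$ a nonzero element has either $0$ or $3$ cube roots; squaring behaves analogously by parity. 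Matching the number of roots in each of the four regimes to $\#H_\Lc \in\{3,12,1,2\}$ then yields the stated orbit sizes.

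\textbf{The main obstacle} I anticipate is the bookkeeping needed to establish that the stabilizer condition really reduces to the clean root-counting problem, rather than to a messier system with spurious components. In particular, one must carefully handle the two ways the image of $\Lc$ can coincide with $\Lc$ (the two spanning points can be permuted or fixed), and one must ensure that the genuinely projective nature of the points is respected, so that equalities are taken up to scalar and the constraint $ad-bc\ne 0$ is not inadvertently violated by the candidate solutions. I would also need to verify that all counted solutions indeed give \emph{distinct} projectivities in $G_q\cong\mathrm{PGL}(2,q)$, since the matrix $\MM$ is defined only up to the scaling $(a,b,c,d)\mapsto(\lambda a,\lambda b,\lambda c,\lambda d)$; accounting correctly for this scalar ambiguity is exactly what separates, e.g., the count $12$ from a naive count of $3$ or $4$ in the odd cube case, so this is where the parity-dependent factor of $2$ and the $\mathrm{gcd}$-type subtleties must be tracked with care.
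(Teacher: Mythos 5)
Your plan is essentially the paper's own argument (carried out in Section \ref{sec6} for the generalization $\Ls_\rho$, where $\Lc=\Ls_1$): orbit--stabilizer reduces the theorem to showing $\#G_q^{\Ls_\rho}\in\{3,12,1,2\}$ in the four regimes, and the paper computes these stabilizers exactly as you outline --- acting with the matrix \eqref{eq21:M} on the two spanning points $K_{\rho,0}$, $K_{\rho,\infty}$, splitting into the cases where $K_{\rho,\infty}$ is fixed or moved, normalizing the projective scalar, and counting roots of $d^3=1$ and $d^3=-1/(2\rho)$ (for $\rho=1$ this is governed by $q\bmod 3$ and by whether $2$ is a cube, with the order-$12$ stabilizer arising as $3$ diagonal elements plus $3\times 3$ non-diagonal ones and identified as $\mathbf{A}_4$). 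The one heuristic your sketch gets slightly wrong is the source of the parity split: it is not square-root counting but the fact that in characteristic $2$ the equations for a stabilizing projectivity moving $K_{\rho,\infty}$ (namely $b^2c+2abd=0$ with $b,c\ne0$ forced) are contradictory, so only the diagonal cube-root-of-unity stabilizer survives when $q$ is even.
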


\subsection{A family of $\boldsymbol{\EnG}$-lines $\boldsymbol{\ell_\mu}$ and their orbits $\boldsymbol{\Os_\mu}$}\label{subsec23:lmu}
Here we cite some results from \cite{DMP_OrbLineO6MJOM,DMP_IncO6JG} useful in this paper.

 Let $ \mu\in \F_q^*\setminus\{1\}$ if $q$ is even or $q\equiv0\pmod3$;  $\mu\in
                 \F_q^*\setminus\{1,1/9\}$ if $q$ is odd, $q \not\equiv0\pmod3$. Let
  $ R_{\mu,\gamma}=\Pf(\gamma,\mu,\gamma,1)$, $\gamma\in\F_q^+$, be a point of $\PG(3,q)$.
We consider the line
\begin{equation*}
\ell_{\mu}= \overline{R_{\mu,0}R_{\mu,\infty}} = \overline{\Pf(0,\mu,0,1)\Pf(1,0,1,0)}= \{\Pf(\gamma,\mu,\gamma,1)|\gamma\in\F_q^+,~\mu\T{ is fixed}\}.
\end{equation*}
Let $\Os_{\mu}$ be the orbit of the line $\ell_\mu$ under $G_q$.

\begin{theorem}\label{th23:orbitellmu}
\emph{\cite[Sections 4--7]{DMP_OrbLineO6MJOM}, \cite[Sections 5--7]{DMP_IncO6JG}} Let $q\equiv\xi\pmod3$.
\begin{description}
  \item[(i)]  For all $q\ge5$, we have $\Os_\mu\subset\OO_6=\OO_{\EnG}$, i.e. the lines of $\Os_\mu$ are $\EnG$-lines.
The orbit $\Os_\mu$ has size
\begin{align*}
 \#\Os_\mu=\left\{\begin{array}{@{\,}ll}
                          (q^3-q)/2& \T{if } q\T{ is even or }\mu \T{ is a non-square in }\F_q;\\
                          (q^3-q)/4& \T{if }\mu \T{ is a square in }\F_q\T{ and }\xi=0;\\
                          (q^3-q)/4&\T{if }q\T{ is odd},  \mu \T{ is a square in }\F_q,~\xi\ne0,\Upsilon_{q,\mu}\T{ does not hold};\\
                          (q^3-q)/12&\T{if }q\T{ is odd},~\xi\ne0,  \Upsilon_{q,\mu}\T{ holds} ;
                        \end{array}
 \right.
\end{align*}
where the condition $\Upsilon_{q,\mu}$ has the form
\begin{equation}\label{eq23:Upsilon}
  \Upsilon_{q,\mu} \;:~\mu=-1/3,~q \equiv 1 \pmod {12}, ~-1/3\T{ is a fourth power}.
\end{equation}
  \item[(ii)] Let $q$ be odd, $\xi\ne0$, $\mu\in
                 \F_q^*\setminus\{1,1/9\}$. Then every line of $\Os_{\mu}$ contains $\mathfrak{n}_{q}(\mu)$ $\Tr$-points;
 \begin{align}\label{eq23:solution tang}
 &\mathfrak{n}_{q}(\mu)=\#\left\{t\in\F_q\,|\;t=\pm\sqrt{\frac{1}{2}\left(3\mu-1\pm\sqrt{(\mu-1)(9\mu-1}\right)}\right\}\in\{0,2,4\}.\
 \end{align}

  \item[(iii)] Let $q$ be even. Then every line of the orbit $\Os_{\mu}$ contains $\mathfrak{n}_{q}(\mu)=2$ $\Tr$-points.
\end{description}
\end{theorem}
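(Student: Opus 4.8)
The plan is to prove (i) by the orbit--stabilizer theorem and (ii)--(iii) by a direct incidence analysis between $\ell_\mu$ and the tangents $\TT_t$. Throughout I use the identification $G_q\cong\mathrm{PGL}(2,q)$ together with its M\"obius action on the parameter $t\in\F_q^+$, under which a matrix $\MM$ of \eqref{eq21:M} carries $P(t)$ and $\TT_t$ to $P(t')$ and $\TT_{t'}$ for the corresponding image parameter $t'$. First I would verify $\Os_\mu\subset\OO_6$ by checking that $\ell_\mu$ is an $\EnG$-line: no point $\Pf(\gamma,\mu,\gamma,1)$ or $\Pf(1,0,1,0)$ lies on $\C$, the Pl\"ucker vector of $\ell_\mu$ has neither the chord form $L_{\T{ch}}$ nor the axis form $L_{\T{ax}}$, and $\ell_\mu$ lies in no osculating plane $\pi_\T{osc}(t)$ of \eqref{eq21:osc_plane}; since $G_q$ permutes the classes, the whole orbit then lies in $\OO_6$.

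For (i) I would compute $\T{Stab}_{G_q}(\ell_\mu)$ and set $\#\Os_\mu=(q^3-q)/\#\T{Stab}_{G_q}(\ell_\mu)$, using $\#G_q=q^3-q$. A line is preserved exactly when both of its generating points $R_{\mu,0}=\Pf(0,\mu,0,1)$ and $R_{\mu,\infty}=\Pf(1,0,1,0)$ are mapped into it; substituting $\MM$ from \eqref{eq21:M} into these two incidences and reducing modulo scalars produces a polynomial system in $a,b,c,d$. The generic solution is a single involution, giving the universal factor $2$. In odd characteristic a further projectivity joins the stabilizer precisely when $\mu$ is a square in $\F_q$, raising the order to $4$; and when the fourth-power/congruence condition $\Upsilon_{q,\mu}$ of \eqref{eq23:Upsilon} holds, still more elements appear and the order becomes $12$. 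Deciding, in each arithmetic regime (the parity of $q$, the residue $\xi$, and the relevant square/fourth-power conditions), exactly which candidate projectivities actually lie in $G_q$ is the main obstacle and the only genuinely lengthy computation.

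For (ii) and (iii) I would locate the $\Tr$-points of $\ell_\mu$ by requiring that a point of the line be a combination $P(t)+\beta D_t$, where $D_t$ is the tangent direction at $P(t)$, i.e. the derivative of $P(t)$. For odd $q$ one has $D_t=(3t^2,2t,1,0)$; writing $\Pf(\gamma,\mu,\gamma,1)=P(t)+\beta D_t$, the last coordinate fixes the $P(t)$-coefficient to be $1$, and eliminating $\gamma$ and $\beta$ from the remaining coordinates gives $\gamma=2t^3/(3t^2-1)$ and
\begin{equation*}
\mu=\frac{t^2(t^2+1)}{3t^2-1},\qquad\T{equivalently}\qquad t^4+(1-3\mu)t^2+\mu=0.
\end{equation*}
Putting $s=t^2$ yields a quadratic with discriminant $(1-3\mu)^2-4\mu=(\mu-1)(9\mu-1)$, so $s=\tfrac12\big(3\mu-1\pm\sqrt{(\mu-1)(9\mu-1)}\big)$; since the $t$-roots occur in pairs $\pm t$ and the exclusions $\mu\ne0,\tfrac19$ remove the degenerate roots, $\mathfrak{n}_q(\mu)\in\{0,2,4\}$ exactly as in \eqref{eq23:solution tang}.

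For even $q$ the same elimination applies with $D_t=(t^2,0,1,0)$ (as $2=0$, $3=1$), and again the $P(t)$-coefficient must be $1$. Now $\Pf(\gamma,\mu,\gamma,1)=P(t)+\beta D_t$ forces $\mu=t^2$ and $\gamma=\gamma t^2$, i.e. $\gamma(t+1)^2=0$; since $\mu\ne1$ gives $t=\sqrt\mu\ne1$, the unique finite solution is $\gamma=0$, producing the single $\Tr$-point $R_{\mu,0}$, which lies on $\TT_{\sqrt\mu}$. The remaining point $R_{\mu,\infty}=\Pf(1,0,1,0)$ is the point at infinity of the tangent $\TT_1$ at $P(1)$, hence a second $\Tr$-point, and no point of $\ell_\mu$ meets $\TT_\infty$. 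Thus $\mathfrak{n}_q(\mu)=2$, which is part (iii).
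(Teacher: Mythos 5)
First, a point of reference: this paper does not prove Theorem \ref{th23:orbitellmu} at all --- it is quoted as background from \cite{DMP_OrbLineO6MJOM,DMP_IncO6JG} --- so the only in-paper yardstick is the analogous treatment of the lines $\Ls_\rho$ in Sections \ref{sec5:intersec}--\ref{sec6}. Against that yardstick, your parts (ii) and (iii) are essentially sound: your elimination via $P(t)+\beta D_t$ yields the quartic $t^4+(1-3\mu)t^2+\mu=0$, whose roots are exactly the values in \eqref{eq23:solution tang}, and it is a legitimate alternative to the method the paper itself uses in Section \ref{sec5:intersec} (and \cite{DMP_IncO6JG} uses for $\ell_\mu$), namely setting the mutual invariant $\varpi$ of the Pl\"ucker coordinates of the two lines to zero; both routes produce the same equation. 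Two small holes remain: in the odd case you never examine the point $R_{\mu,\infty}=\Pf(1,0,1,0)$ (your elimination covers only the $q$ points $\Pf(\gamma,\mu,\gamma,1)$ with $\gamma\in\F_q$, so one must check separately that $\Pf(1,0,1,0)$ lies on no tangent, including $\TT_\infty$, when $q$ is odd), and you silently identify the number of roots $t$ with the number of $\Tr$-points, which requires that distinct roots give distinct points of $\ell_\mu$ --- true because tangents to $\C$ are pairwise disjoint for $q\not\equiv0\pmod 3$ (equivalently $\#\M_\Tr=q^2+q$), but this needs to be said.

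The genuine gap is part (i). Your argument for the orbit sizes reduces to the assertions that the stabilizer of $\ell_\mu$ is ``generically'' an involution, grows to order $4$ precisely when $\mu$ is a square, and to order $12$ precisely when $\Upsilon_{q,\mu}$ holds --- and you then concede that deciding which candidate projectivities actually lie in $G_q$ is ``the main obstacle and the only genuinely lengthy computation.'' But those assertions \emph{are} the theorem: by orbit--stabilizer (the trivial step), the four orbit sizes and the exact arithmetic conditions separating them are equivalent to the stabilizer orders you have assumed. Nothing in the proposal derives the polynomial system in $a,b,c,d$ from \eqref{eq21:M}, solves it, or explains why squareness of $\mu$ (respectively the condition $\mu=-1/3$, $q\equiv1\pmod{12}$, $-1/3$ a fourth power) is the exact dividing line. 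Compare the work this paper actually carries out for the analogous family $\Ls_\rho$ in Lemmas \ref{lem6:stabil0inf}--\ref{lem6:stabil0infqp1size12} and Theorem \ref{th6:main}: each case demands an explicit analysis of the matrix entries, including ruling out spurious solutions via $ad-bc=0$ and cube/non-cube distinctions, and identifying the group structure (e.g.\ $\bold{A}_4$ in the order-$12$ case). As it stands, part (i) of your proposal is a plan plus the answer, not a proof.
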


\section{A family of $\boldsymbol{\EnG}$-lines $\boldsymbol{\Ls_\rho, \rho \neq 0}$}\label{sec3:Lrho}
We introduce an $\EnG$-line $\Ls_\rho$.
Let $K_{\rho,\gamma}$ be the points of $\PG(3,q)$ such that
\begin{equation}\label{eq3:K}
 K_{\rho,\gamma}=\Pf(\rho,0,\gamma,1),~\gamma\in\F_q^+,~\rho\in\F_q^*;~K_{\rho,0}=\Pf(\rho,0,0,1),~K_{\rho,\infty}=\Pf(0,0,1,0).
\end{equation}
We consider the line $\Ls_{\rho}$ through $K_{\rho,0}$ and $K_{\rho,\infty}$.
\begin{align}\label{eq3:Lrho}
&\Ls_{\rho}= \overline{\Pf(\rho,0,0,1)\Pf(0,0,1,0)}= \{\Pf(\rho,0,\gamma,1)|\gamma\in\F_q^+,~\rho\T{ is fixed},~\rho\in\F_q^*\}.
\end{align}
By \eqref{eq3:Lrho}, the coordinate vector $L_{\rho}$ of $\Ls_{\rho}$  is
\begin{equation}\label{eq3:cordVecElrho}
  L_{\rho} = (0,\rho,0,0,0,-1).
\end{equation}

By \eqref{eq21:tang}, \eqref{eq3:cordVecElrho}, if $\rho=0$ the line $\Ls_0$ is the tangent $\TT_0$.
Note also that the equations of $\TT_0$ are $x_0=0,x_1=0$ \cite[Lemma 5.2]{DMP_OrbLineMJOM}. This explains why we consider $\rho\in \F_q^*$.

By \eqref{eq22:ell0infdef}, \eqref{eq3:Lrho}, the line $\Lc$ of \cite{DMP_OrbLineO6MJOM} is the line $\Ls_1$.
\begin{lemma}\label{lem3:pi0inf}
Let $q\not\equiv0\pmod3$.  Then we have
\begin{align}\label{eq3:K_in notin}
&  K_{\rho,\gamma}\notin\pi_\T{osc}(t),~ t=0,\infty,~\gamma\in\F_q;\db\\
& K_{\rho,\infty}\in\pi_\T{osc}(t),~ t=0,\infty;~ K_{\rho,\infty}\notin\pi_\T{osc}(t),~t\in\F_q^*;\db\notag\\
&K_{\rho,0}\in\pi_\T{osc}(t),~t\in\F_q^*,\T{ if and only if }\rho=t^3.\notag
\end{align}
This implies the following:
$K_{\rho,\infty}$ belongs to exactly two osculating planes, i.e. $K_{\rho,\infty}$ is a $\Tr$-point.
If $\rho$ is a non-cube in $\F_q$ then $K_{\rho,0}$ is a $0_\Gamma$-point.
If $\rho$ is a cube in $\F_q$ then $K_{\rho,0}$ is a $1_\Gamma$-point, if $q\equiv-1\pmod3$, or $3_\Gamma$-point, if $q\equiv1\pmod3$.
\end{lemma}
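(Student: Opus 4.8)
The plan is to verify the three containment/non-containment statements in \eqref{eq3:K_in notin} by direct substitution of the point coordinates from \eqref{eq3:K} into the osculating-plane equations from \eqref{eq21:osc_plane}, and then to translate these incidences into the point-type classifications of Notation \ref{notation21} using the definitions of $\mu_\Gamma$-points and $\Tr$-points.

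First I would substitute $K_{\rho,\gamma}=\Pf(\rho,0,\gamma,1)$ into the osculating planes at $t=0$ and $t=\infty$. By \eqref{eq21:osc_plane}, $\pi_\T{osc}(0)=\boldsymbol{\pi}(1,0,0,0)$ (the plane $x_0=0$) and $\pi_\T{osc}(\infty)=\boldsymbol{\pi}(0,0,0,1)$ (the plane $x_3=0$). Evaluating, $K_{\rho,\gamma}$ gives $x_0=\rho\ne0$ and $x_3=1\ne0$, so $K_{\rho,\gamma}\notin\pi_\T{osc}(0),\pi_\T{osc}(\infty)$ for every $\gamma\in\F_q$; this is the first line of \eqref{eq3:K_in notin}. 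For the point $K_{\rho,\infty}=\Pf(0,0,1,0)$ I substitute into $\pi_\T{osc}(t)=\boldsymbol{\pi}(1,-3t,3t^2,-t^3)$, obtaining the incidence condition $3t^2=0$; since $q\not\equiv0\pmod3$ this forces $t=0$, and together with $t=\infty$ (where the plane $x_3=0$ clearly contains $\Pf(0,0,1,0)$) we get exactly the two osculating planes claimed in the second line.

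Next I would handle $K_{\rho,0}=\Pf(\rho,0,0,1)$. Substituting into $\pi_\T{osc}(t)=\boldsymbol{\pi}(1,-3t,3t^2,-t^3)$ for $t\in\F_q^*$ yields the condition $\rho-t^3=0$, i.e.\ $\rho=t^3$, giving the third line of \eqref{eq3:K_in notin}; one checks separately that $t=0,\infty$ do not contain $K_{\rho,0}$ (these give $x_0=\rho\ne0$ and $x_3=1\ne0$). The number of osculating planes through $K_{\rho,0}$ is therefore the number of cube roots of $\rho$ in $\F_q$. Here the arithmetic of cubes in $\F_q$ enters: if $\rho$ is a non-cube there are no solutions, so $K_{\rho,0}$ lies on $0$ osculating planes and is a $0_\Gamma$-point; if $\rho$ is a cube, the count of cube roots depends on $\gcd(3,q-1)$, which is $1$ when $q\equiv-1\pmod3$ (unique cube root, hence a $1_\Gamma$-point) and $3$ when $q\equiv1\pmod3$ (three cube roots, hence a $3_\Gamma$-point).

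The main obstacle, such as it is, lies in the last step: I must justify that the $\mu_\Gamma$-classification depends only on the total number of osculating planes through the point and not on which planes they are, and that the cube-root count behaves as stated. This uses the standard fact that $x\mapsto x^3$ is a bijection on $\F_q^*$ precisely when $3\nmid q-1$ (equivalently $q\equiv-1\pmod3$), and is three-to-one onto the cubes when $3\mid q-1$ (equivalently $q\equiv1\pmod3$); the case $q\equiv0\pmod3$ is excluded by hypothesis, which is essential since there $\pi_\T{osc}$ degenerates to a pencil and $x\mapsto x^3$ is a bijection via the Frobenius map, so the counting would differ. Finally, the assertion that $K_{\rho,\infty}$ is a $\Tr$-point follows because lying on exactly two osculating planes characterizes a point on a tangent to $\C$ (a $\Tr$-point) in the classification of Theorem \ref{th21:Hirs}(iii); I would note that $K_{\rho,\infty}=\Pf(0,0,1,0)$ indeed lies on the tangent $\TT_0$, consistent with the remark following \eqref{eq3:cordVecElrho} that $\Ls_0=\TT_0$ has equations $x_0=x_1=0$.
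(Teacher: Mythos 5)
Your proof is correct and follows essentially the same route as the paper's (very terse) proof: direct substitution of the coordinates from \eqref{eq3:K} into the osculating-plane equations \eqref{eq21:osc_plane} to get \eqref{eq3:K_in notin}, then the standard cube-root counting in $\F_q^*$ (the facts the paper cites from Hirschfeld, Section 1.5(iv)(v)) combined with the point-type definitions to conclude. Your extra observation that $K_{\rho,\infty}$ lies on the tangent $\TT_0$ (equations $x_0=x_1=0$) is a nice direct confirmation of the $\Tr$-point claim, but it is an elaboration of, not a departure from, the paper's argument.
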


\begin{proof}
 By \eqref{eq21:osc_plane},\eqref{eq3:K}, we have \eqref{eq3:K_in notin}. The rest of the assertions
follows from \cite[Section 1.5(iv)(v)]{Hirs_PGFF}.
\end{proof}

\begin{lemma}\label{lem3:l0infInEnG}
\begin{description}
  \item[(i)]
For $q\not\equiv0\pmod3$, the line  $\Ls_{\rho}$ is an $\EnG$-line.
  \item[(ii)]
For $q\equiv0\pmod3$, the line  $\Ls_{\rho}$ is not an $\EnG$-line.
\end{description}
\end{lemma}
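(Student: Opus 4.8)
The plan is to determine precisely which type of line $\Ls_\rho$ is, according to the classification in \cite[Lemma 21.1.4]{Hirs_PG3q}. Recall that an $\EnG$-line is a line external to $\C$ that does not lie in any $\Gamma$-plane and is neither a chord nor an axis. So to prove part \textbf{(i)} I must verify four things for $q\not\equiv0\pmod3$: that $\Ls_\rho$ meets $\C$ in no point, that it lies in no osculating plane, that it is not a chord, and that it is not an axis. For part \textbf{(ii)}, when $q\equiv0\pmod3$, I must exhibit which of these conditions fails, thereby placing $\Ls_\rho$ in a different class.

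First I would check that $\Ls_\rho$ is external to $\C$. A point $\Pf(\rho,0,\gamma,1)$ on $\Ls_\rho$ equals a cubic point $P(t)=\Pf(t^3,t^2,t,1)$ only if $t^2=0$, forcing $t=0$ and hence $\rho=0$, which is excluded; the point $K_{\rho,\infty}=\Pf(0,0,1,0)$ is clearly not on $\C$ either. So $\Ls_\rho\cap\C=\varnothing$ for all $\rho\in\F_q^*$, independently of $\xi$. Next, the ``not in a $\Gamma$-plane'' condition: by Lemma \ref{lem3:pi0inf} (specifically \eqref{eq3:K_in notin}), the two generating points $K_{\rho,0}$ and $K_{\rho,\infty}$ are never simultaneously contained in a single osculating plane $\pi_\T{osc}(t)$, so $\Ls_\rho$ lies in no $\Gamma$-plane when $q\not\equiv0\pmod3$. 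The efficient way to handle the chord and axis conditions together is to compare the coordinate vector $L_\rho=(0,\rho,0,0,0,-1)$ from \eqref{eq3:cordVecElrho} against the generic chord vector $L_{\T{ch}}=(a_2^2,a_1a_2,a_1^2-a_2,a_2,-a_1,1)$ and the generic axis vector $L_{\T{ax}}=(\beta_2^2,\beta_1\beta_2,3\beta_2,(\beta_1^2-\beta_2)/3,-\beta_1,1)$, both recalled in Section \ref{subsec21:twcub}. Matching $L_\rho$ to either vector forces contradictions: the last Plücker coordinate of $L_\rho$ is $-1$ while that of $L_{\T{ch}}$ and $L_{\T{ax}}$ is $1$, but more usefully the third and fourth coordinates of $L_\rho$ vanish, forcing $\beta_2=0$ (whence $3\beta_2=0$) in the axis case and $a_2=0,\ a_1^2-a_2=0$ (whence $a_1=0$) in the chord case, and one then checks the remaining coordinates cannot be made proportional to $(0,\rho,0,0,0,-1)$ with $\rho\ne0$. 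This establishes \textbf{(i)}.

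For part \textbf{(ii)}, the key observation is that the axis coordinate vector $L_{\T{ax}}$ contains the entry $(\beta_1^2-\beta_2)/3$ and the null polarity \eqref{eq21:null_pol} involves division by $3$; both are undefined when $q\equiv0\pmod3$. In characteristic $3$ the osculating developable $\Gamma$ degenerates from a cubic developable into a \emph{pencil of planes}, as noted after \eqref{eq21:osc_plane}, so the geometry of osculating planes changes qualitatively. The natural line of attack is to show that in this case $\Ls_\rho$ does lie in some osculating plane, i.e. it is \emph{not} excluded from the $\Gamma$-planes, which would disqualify it from being an $\EnG$-line. Concretely, I would substitute the two generating points of $\Ls_\rho$ into the osculating plane equation $x_0-3tx_1+3t^2x_2-t^3x_3=0$ from \eqref{eq21:osc_plane}; since $q\equiv0\pmod3$ kills the middle terms, this reduces to $x_0-t^3x_3=0$, and one checks that both $K_{\rho,0}=\Pf(\rho,0,0,1)$ and $K_{\rho,\infty}=\Pf(0,0,1,0)$ satisfy it for the value $t$ with $t^3=\rho$ (using that in characteristic $3$ the Frobenius cubing is a bijection on $\F_q$, so such $t$ exists uniquely). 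Hence $\Ls_\rho$ lies in the osculating plane $\pi_\T{osc}(t)$, a $\Gamma$-plane, and is therefore not an $\EnG$-line.

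The main obstacle is the axis case in part \textbf{(i)}. The external, non-chord, non-$\Gamma$-plane conditions are quick coordinate checks, but ruling out that $\Ls_\rho$ is an axis requires care because the axis is a dual object (an intersection of osculating planes) whose defining data $\beta_1,\beta_2$ may range over $\F_q^+$ rather than $\F_q$, so the boundary cases $\beta_i=\infty$ must be inspected separately alongside the generic Plücker-coordinate comparison. I expect that the cleanest argument is the dual one: an axis is by definition the intersection of two osculating planes of $\Gamma$, so if $\Ls_\rho$ were an axis it would lie on each of those two $\Gamma$-planes, contradicting the $\Gamma$-plane exclusion already established via Lemma \ref{lem3:pi0inf}. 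This dual reformulation sidesteps the messy coordinate matching and closes the argument efficiently.
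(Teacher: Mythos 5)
Your core argument for part (i) is the paper's own: Lemma \ref{lem3:pi0inf} shows that no osculating plane contains both $K_{\rho,0}$ and $K_{\rho,\infty}$, and matching the coordinate vector \eqref{eq3:cordVecElrho} against $L_{\T{ch}}$ (resp.\ $L_{\T{ax}}$) forces $a_1=a_2=0$ (resp.\ $\beta_1=\beta_2=0$) while the second coordinate must be proportional to $\rho\neq0$, a contradiction; part (ii) is likewise identical, since in characteristic $3$ the plane $\pi_\T{osc}(t)$ has equation $x_0-t^3x_3=0$ and contains $\Ls_\rho$ for the unique $t=\sqrt[3]{\rho}$. Your explicit check that $\Ls_\rho$ is external to $\C$ is a small but genuine improvement: the paper leaves it implicit, yet ``not a chord'' by itself does not exclude unisecants, so externality is a separate condition needed for membership in $\OO_6$.

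However, the ``dual'' argument that you single out as the cleanest way to close the axis case has a genuine gap. By the definition recalled in Section \ref{subsec21:twcub}, an axis of $\Gamma$ is the intersection of a pair of \emph{real} planes or of \emph{complex conjugate} planes of $\Gamma$; in the latter case (an imaginary axis) the two defining planes live over $\F_{q^2}$ and are not $\Gamma$-planes of $\PG(3,q)$, so an imaginary axis lies in no real osculating plane at all, and the exclusion you obtained from Lemma \ref{lem3:pi0inf} (which is stated only for $t\in\F_q^+$) yields no contradiction. As written, your dual argument rules out real axes only. It can be repaired in two ways: (a) keep the Pl\"{u}cker comparison, which does cover imaginary axes, because for a conjugate pair $u,u^q$ one has $\beta_1=u+u^q\in\F_q$ and $\beta_2=u^{q+1}\in\F_q$, so the deduction $\beta_1=\beta_2=0$ versus $\beta_1\beta_2$ proportional to $\rho\neq0$ applies verbatim; or (b) re-run the substitution behind Lemma \ref{lem3:pi0inf} over $\F_{q^2}$: $K_{\rho,\infty}\in\pi_\T{osc}(u)$ forces $3u^2=0$, hence $u=0$ since the characteristic is not $3$, so $\Ls_\rho$ lies in no osculating plane of the extended cubic and hence in neither plane defining an imaginary axis. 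Note also the complementarity you missed: the boundary cases $\beta_i=\infty$ that worried you correspond to real axes contained in $\pi_\T{osc}(\infty)$, which Lemma \ref{lem3:pi0inf} excludes directly, whereas the case your dual argument fails to cover is exactly the one the coordinate comparison handles.
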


\begin{proof}
\begin{description}
  \item[(i)]
By Lemma \ref{lem3:pi0inf}, for each osculating plane there is a point of $\Ls_{\rho}$ not belonging to it.
Also, comparing the coordinate vectors \eqref{eq3:cordVecElrho} and $L_{\T{ch}}$  (resp. $L_{\T{ax}}$) from Section~\ref{subsec21:twcub}, we obtain $a_1=a_2=0,~ a_1a_2=\rho$ (resp. $\beta_1=\beta_2=0, ~\beta_1\beta_2=\rho$), contradiction. So, $\Ls_{\rho}$ is not a chord (resp. axis) of $\C$.
  \item[(ii)]
  By \eqref{eq21:osc_plane}, \eqref{eq3:Lrho}, for $q\equiv0\pmod3$, the line  $\Ls_{\rho}$ lies in the osculating plane $\pi_\T{osc}(\sqrt[3]{\rho})$.
  Also, if $q\equiv0\pmod3$, then every element of $\F_q$ is a cube. \qedhere
\end{description}
\end{proof}

\emph{From now on, we consider the lines $\Ls_{\rho}$ for $q\not\equiv0\pmod3$.
}
\begin{lemma}\label{lem3:LsU=Ls}
 We have $\Ls_\rho=\Ls_\rho\A$.
 \end{lemma}

 \begin{proof}
 We have $\Ls_\rho\A=K_{\rho,\infty}\A\cap K_{\rho,0}\A=\boldsymbol{\pi}(0,-3,0,0)\cap\boldsymbol{\pi}(1,0,0,-\rho)$ that implies
 $\{K_{\rho,\infty}$, $K_{\rho,0}\}\subset\Ls_\rho\A$.
 \end{proof}

\section{Useful relations}\label{sec:useful}

We fix  a primitive element $\alpha$ of the field $\F_q$. The discrete logarithm $\log$ of $\beta \in \F_q^*$ is the integer $b \in [0, \dots, q-1]$ such that $\alpha^b = \beta$.
Let $\Rk_m$, $m=0,1,2$, be a class of the values of $\rho$ such that
    \begin{align}\label{eq4:Rmdefin}
 &     \Rk_m\triangleq\{\rho\in \F^*_q\,|\,\log \rho\equiv m\pmod3\}.
    \end{align}

\begin{lemma}\label{lem82:logofcubes}
Let $q\equiv1\pmod3$. Then $\beta \in \F_q^*$ is a cube if and only if $\log\beta\equiv0\pmod3$.
\end{lemma}

\begin{proof}
If $q\equiv1\pmod3$, then $q-1\equiv0\pmod3$. By hypothesis $\beta=\gamma^3$, $\gamma \in \F_q^*$. Then  $\log\beta=3\log\gamma-k(q-1),~ k \ge 0$, so $\log\beta\equiv0\pmod3$. The converse is obvious.
\end{proof}

\begin{lemma}\label{lem4:Tpoint=2Gammapoint}
  For $q\not\equiv0\pmod3$,  every $\Tr$-point lies in exactly two $\Gamma$-planes.
\end{lemma}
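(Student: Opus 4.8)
The plan is to exploit the $G_q$-invariance of the quantity ``number of $\Gamma$-planes through a point'' and reduce the whole statement to a single tangent, where it becomes a short root count. First I would note that $G_q$ preserves $\C$, hence sends tangents to tangents and fixes the osculating developable $\Gamma$, so it permutes the $\Gamma$-planes. By the triple (in particular single) transitivity of $G_q$ on $\C$ stated in Theorem~\ref{th21:Hirs}(i), for any $\Tr$-point $Y$, lying off $\C$ on some tangent $\TT_t$, there is $g\in G_q$ with $g(P(t))=P(0)$; then $g$ carries $\TT_t$ to $\TT_0$ and $Y$ to an off-$\C$ point of $\TT_0$, that is, to a $\Tr$-point on $\TT_0$, while preserving the number of $\Gamma$-planes through the point. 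Hence it suffices to prove that every off-$\C$ point of the single tangent $\TT_0$ lies in exactly two $\Gamma$-planes.

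Next I would use the description $\TT_0=\{x_0=0,\,x_1=0\}$ recalled in Section~\ref{sec3:Lrho}, so that its points are $\Pf(0,0,\gamma,1)$ with $\gamma\in\F_q$ together with $\Pf(0,0,1,0)=K_{\rho,\infty}$; removing the single point $P(0)=\Pf(0,0,0,1)$ of $\C$ leaves precisely the $\Tr$-points of $\TT_0$. By \eqref{eq21:osc_plane} a point $X$ lies in $\pi_\T{osc}(s)$, $s\in\F_q$, exactly when $x_0-3sx_1+3s^2x_2-s^3x_3=0$, and in $\pi_\T{osc}(\infty)$ exactly when $x_3=0$; thus the number of $\Gamma$-planes through $X$ equals the number of distinct solutions $s\in\F_q^+$ of this degree-$\le 3$ equation (the value $s=\infty$ corresponding to $x_3=0$).

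Finally I would carry out the elementary count. For $X=\Pf(0,0,\gamma,1)$ the finite equation reads $3s^2\gamma-s^3=0$, i.e.\ $s^2(3\gamma-s)=0$, whose distinct roots are $s=0$ and $s=3\gamma$, while $\pi_\T{osc}(\infty)$ is excluded since $x_3=1\neq0$; for $X=\Pf(0,0,1,0)$ the finite equation gives only $s=0$, whereas now $\pi_\T{osc}(\infty)$ does contain $X$, as already observed in Lemma~\ref{lem3:pi0inf}. In both cases one obtains exactly two $\Gamma$-planes. The only real content is the double root at $s=0$: a generic point off $\C$ yields three distinct roots (a $3_\Gamma$-point), and it is precisely the tangency that forces $s=0$ to be repeated, dropping the plane count to two. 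This is where the hypothesis $q\not\equiv0\pmod3$ enters, guaranteeing $3\neq0$ so that the two roots $0$ and $3\gamma$ remain distinct for $\gamma\neq0$ (for $q\equiv0\pmod3$ the cubic degenerates to $-s^3=0$, mirroring the pencil structure of $\Gamma$). I do not anticipate any serious obstacle; the one point to get right is the invariance reduction, ensuring that verifying the single tangent $\TT_0$ genuinely covers all $\Tr$-points.
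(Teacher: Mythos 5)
Your proof is correct, but it takes a genuinely different route from the paper: the paper disposes of this lemma in one line by citing Table~1 of \cite{BDMP-TwCub}, where the point--plane incidences of the $G_q$-orbits were already computed, so the content is entirely outsourced to that earlier work. You instead give a self-contained argument: first the orbit reduction (since $G_q$ fixes $\C$, it sends tangents to tangents and permutes the $\Gamma$-planes, so by transitivity on $\C$ it suffices to treat the points of $\TT_0$ off $\C$), and then a direct root count, where for $X=\Pf(0,0,\gamma,1)$, $\gamma\ne0$, the incidence condition with $\pi_\T{osc}(s)$ becomes $s^2(3\gamma-s)=0$, giving the two planes $\pi_\T{osc}(0)$ and $\pi_\T{osc}(3\gamma)$, and for $X=\Pf(0,0,1,0)$ one gets $\pi_\T{osc}(0)$ and $\pi_\T{osc}(\infty)$, consistently with Lemma~\ref{lem3:pi0inf}. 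Both steps are sound: the tangent $\TT_0$ meets $\C$ only at $P(0)$, so every off-$\C$ point of $\TT_0$ is indeed a $\Tr$-point, and distinct parameters $s$ give distinct $\Gamma$-planes. What each approach buys: the paper's citation is shorter and fits its style of building on the authors' earlier series; your version keeps the paper self-contained, uses only ingredients already stated in it (Theorem~\ref{th21:Hirs}(i), equation \eqref{eq21:osc_plane}, the equations $x_0=x_1=0$ of $\TT_0$ recalled in Section~\ref{sec3:Lrho}), and makes visible exactly where the hypothesis $q\not\equiv0\pmod3$ enters --- namely that $3\ne0$ keeps the roots $0$ and $3\gamma$ distinct, whereas in characteristic $3$ the count drops to one, matching the degeneration of $\Gamma$ to a pencil.
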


\begin{proof}
  The assertion follows from \cite[Table 1]{BDMP-TwCub}.
\end{proof}

 \begin{notation}\label{notat4:}
 The following notation is used:
 \begin{align*}
 &\Ob_\rho&&\T{the orbit under $G_q$ generated by the line }\Ls_\rho;\db\\
&\Pi_{\pi}&&\T{the number of $\pi$-planes through a line from the orbit }\Ob_\rho,~\pi\in\Pk;\db\\
&\Lambda_{\pi}&&\T{the number of lines from the orbit $\Ob_\rho$ in a $\pi$-plane},~\pi\in\Pk;\db\\
&\Pb_{\pk}&&\T{the  number of $\pk$-points on a line from the orbit }\Ob_\rho,~\pk\in\Mk;\db\\
&\Lb_{\pk}&&\T{the number of lines from the orbit $\Ob_\rho$ through a $\pk$-point},\pk\in\Mk.
\end{align*}
Below, for the notations $\Pi_{\pi},\Lambda_{\pi},\Pb_{\pk},\Lb_{\pk}$ the value of $\rho$ will be clear by the context.
\end{notation}

\begin{lemma}\label{lem4:orb the same}
\emph{\cite[Lemma 1(i)(iii)]{DMP_PointLineInc}, \cite[Lemma 4.1(i)]{DMP_PlLineIncJG}}
\begin{description}
  \item[(i)]
  The number  of lines from an orbit $\Ob_\rho$ in a plane of an orbit $\N_\pi$ is the same for all planes of~$\N_\pi$;
conversely, the number of planes from an orbit $\N_\pi$ through a line of an orbit $\Ob_\rho$ is the same for all lines of $\Ob_\rho$. Here $\pi\in\Pk$.

\item[(ii)]
  The number of lines from an orbit $\Ob_\rho$ through a point of an orbit $\M_\pk$ is the same for all points of~$\M_\pk$.
And, vice versa, the number of points from an orbit $\M_\pk$ on a line of an orbit $\Ob_\rho$ is the same for all lines of $\Ob_\rho$. Here $\pk\in\Mk$.
\end{description}
\end{lemma}

\begin{proposition}\emph{\cite[Equation (4.7)]{DMP_PointLineInc}, \cite[Equation (4.3)]{DMP_PlLineIncJG}, \cite[Lemma 3.2, Proposition~3.7]{DMP_IncO6JG}}
For the orbit $\Ob_\rho$, generated by a line $\Ls_\rho$, we have
 \begin{align}
&\Lb_{\pk}=\frac{\Pb_{\pk}\cdot\#\Ob_\rho}{\#\M_\pk},\,\pk\in\Mk;~\Lambda_{\pi}=\frac{\Pi_{\pi}\cdot\#\Ob_\rho}{\#\N_\pi},\,\pi\in\Pk;\label{eq4:obtainLamb1}\db\\
&\Pb_{3_\Gamma}=\frac{q+1-\Pb_{1_\Gamma}-2\Pb_{\Tr}}{3},~\Pb_{0_\Gamma}=\Pb_{\Tr}+2\Pb_{3_\Gamma}.\label{eq4:EnGorbit3}
\end{align}
\end{proposition}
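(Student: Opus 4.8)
The plan is to derive the first two identities by a standard flag-counting (double-counting of incident pairs), and the last two relations by counting the incidences between the points of a single line of $\Ob_\rho$ and the $q+1$ osculating planes.

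First I would treat $\Lb_{\pk}$ and $\Lambda_{\pi}$. Fix a point type $\pk\in\Mk$ and count the flags $(L,P)$ with $L\in\Ob_\rho$ and $P$ a $\pk$-point incident with $L$. By Lemma \ref{lem4:orb the same}(ii) every line of $\Ob_\rho$ carries the same number $\Pb_{\pk}$ of $\pk$-points, and every $\pk$-point lies on the same number $\Lb_{\pk}$ of lines of $\Ob_\rho$; hence counting the flags line-by-line gives $\Pb_{\pk}\cdot\#\Ob_\rho$ while counting them point-by-point gives $\Lb_{\pk}\cdot\#\M_\pk$. Equating the two totals yields $\Lb_{\pk}=\Pb_{\pk}\cdot\#\Ob_\rho/\#\M_\pk$. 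The plane identity is obtained verbatim, replacing points by $\pi$-planes, $\M_\pk$ by $\N_\pi$, and invoking Lemma \ref{lem4:orb the same}(i) for the well-definedness of $\Pi_{\pi}$ and $\Lambda_{\pi}$.

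Next I would fix one line $L\in\Ob_\rho$ for the two point-distribution relations. Since $\Ls_\rho$, and hence every line of $\Ob_\rho$, is an $\EnG$-line by Lemma \ref{lem3:l0infInEnG}(i), it is external to $\C$, so $\Pb_{\C}=0$ and its $q+1$ points split only among the four types $0_\Gamma,1_\Gamma,3_\Gamma,\Tr$, giving
\begin{equation*}
\Pb_{0_\Gamma}+\Pb_{1_\Gamma}+\Pb_{3_\Gamma}+\Pb_{\Tr}=q+1.
\end{equation*}
I would then count incidences between the points of $L$ and the $q+1$ osculating planes. Because $L$ lies in no $\Gamma$-plane, each osculating plane meets $L$ in exactly one point, so the total number of incidences is $q+1$. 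Counting the same incidences point-by-point, and using that a $0_\Gamma$-, $1_\Gamma$-, $3_\Gamma$-point lies on $0,1,3$ osculating planes respectively (by the definitions in Notation \ref{notation21}) while a $\Tr$-point lies on exactly two $\Gamma$-planes by Lemma \ref{lem4:Tpoint=2Gammapoint}, I obtain
\begin{equation*}
\Pb_{1_\Gamma}+3\Pb_{3_\Gamma}+2\Pb_{\Tr}=q+1.
\end{equation*}
Solving this for $\Pb_{3_\Gamma}$ gives the first relation $\Pb_{3_\Gamma}=(q+1-\Pb_{1_\Gamma}-2\Pb_{\Tr})/3$, and subtracting the point-count identity from the incidence-count identity (both right-hand sides equal $q+1$, and the $\Pb_{1_\Gamma}$ terms cancel) gives $\Pb_{0_\Gamma}=\Pb_{\Tr}+2\Pb_{3_\Gamma}$, the second relation.

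The arithmetic here is routine; the only genuine inputs are the two combinatorial facts that make the incidence count exact. The step I expect to carry the real weight is the assertion that every osculating plane meets $L$ in exactly one point, which hinges on $L$ lying in no $\Gamma$-plane — this is exactly the content of $\Ls_\rho$ being an $\EnG$-line (Lemma \ref{lem3:l0infInEnG}, and hence the standing hypothesis $q\not\equiv0\pmod3$) — together with the count of two $\Gamma$-planes through a $\Tr$-point supplied by Lemma \ref{lem4:Tpoint=2Gammapoint}. Everything else is bookkeeping.
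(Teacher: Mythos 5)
Your proposal is correct, and both halves are sound: the flag-counting for $\Lb_{\pk}$ and $\Lambda_{\pi}$ (legitimized by Lemma \ref{lem4:orb the same}), and the two incidence counts along a single line — $\Pb_{0_\Gamma}+\Pb_{1_\Gamma}+\Pb_{3_\Gamma}+\Pb_{\Tr}=q+1$ from the point partition and $\Pb_{1_\Gamma}+3\Pb_{3_\Gamma}+2\Pb_{\Tr}=q+1$ from the fact that an $\EnG$-line, lying in no $\Gamma$-plane, meets each of the $q+1$ osculating planes in exactly one point — which together yield both relations in \eqref{eq4:EnGorbit3}. Note that the paper itself gives no proof of this proposition: it is imported verbatim from the cited references \cite{DMP_PointLineInc,DMP_PlLineIncJG,DMP_IncO6JG}, and your double-counting derivation is precisely the standard argument used there, so you have in effect supplied the omitted proof rather than deviated from it.
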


\begin{proposition}\label{prop4:Pb=Pirho}
For the orbit $\Ob_\rho$, generated by a line $\Ls_\rho$, we have
 \begin{align}
&\Pi_{\Gamma}=\Lambda_{\Gamma}=\Pb_{\C}=\Lb_{\C}=0;\label{eq4:=0}\db\\
&\Pb_{\Tr}=\Pi_{2_\C},~\Pb_{0_\Gamma}=\Pi_{0_\C},~\Pb_{1_\Gamma}=\Pi_{\overline{1_\C}},
 ~\Pb_{3_\Gamma}=\Pi_{3_\C};\label{eq4:Pb=Pi}\db\\
&\Lb_{\Tr}=\Lambda_{2_\C},~\Lb_{0_\Gamma}=\Lambda_{0_\C},~\Lb_{1_\Gamma}=\Lambda_{\overline{1_\C}},
 ~\Lb_{3_\Gamma}=\Lambda_{3_\C}.\label{eq4:Lb=Lambda}
\end{align}
\end{proposition}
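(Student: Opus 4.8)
The plan is to derive \eqref{eq4:=0} from the defining geometry of $\EnG$-lines, then to obtain \eqref{eq4:Pb=Pi} from the null polarity $\A$ combined with Lemma~\ref{lem3:LsU=Ls}, and finally to pass from \eqref{eq4:Pb=Pi} to \eqref{eq4:Lb=Lambda} via the counting relations \eqref{eq4:obtainLamb1} together with the size equalities $\#\M_j=\#\N_j$ from Theorem~\ref{th21:Hirs}(iii). Throughout one works with $q\not\equiv0\pmod3$, so that $\A$ is available by \eqref{eq21:null_pol} and $\Ls_\rho$ is an $\EnG$-line by Lemma~\ref{lem3:l0infInEnG}(i).

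First I would settle \eqref{eq4:=0}. By definition an $\EnG$-line is external to $\C$ and lies in no $\Gamma$-plane; hence the representative $\Ls_\rho$ carries no $\C$-point and lies on no osculating plane, giving $\Pb_\C=0$ and $\Pi_\Gamma=0$. The remaining two equalities are then formal: substituting $\Pb_\C=0$ and $\Pi_\Gamma=0$ into the ratio formulas \eqref{eq4:obtainLamb1} yields $\Lb_\C=0$ and $\Lambda_\Gamma=0$.

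The heart of the argument is \eqref{eq4:Pb=Pi}. Since $\A$ is a correlation of $\PG(3,q)$ it reverses inclusion and sends lines to lines, so for any point $P$ and the line $\Ls_\rho$ one has $P\in\Ls_\rho$ if and only if $\Ls_\rho\A\subseteq P\A$. By Lemma~\ref{lem3:LsU=Ls} we have $\Ls_\rho\A=\Ls_\rho$, whence $P\in\Ls_\rho$ if and only if the plane $P\A$ contains $\Ls_\rho$. As $\A$ is an involutory bijection between points and planes, the assignment $P\mapsto P\A$ is therefore a bijection between the points lying on $\Ls_\rho$ and the planes passing through $\Ls_\rho$. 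It remains to match orbit types: by \eqref{eq21:pi(pk)} a $\Tr$-point is sent to a $2_\C$-plane, a $0_\Gamma$-point to a $0_\C$-plane, a $1_\Gamma$-point to a $\overline{1_\C}$-plane, and a $3_\Gamma$-point to a $3_\C$-plane (and conversely, since $\A^2$ is the identity). Counting each type on the two sides of this bijection gives exactly $\Pb_\Tr=\Pi_{2_\C}$, $\Pb_{0_\Gamma}=\Pi_{0_\C}$, $\Pb_{1_\Gamma}=\Pi_{\overline{1_\C}}$, and $\Pb_{3_\Gamma}=\Pi_{3_\C}$.

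Finally, \eqref{eq4:Lb=Lambda} is obtained by transporting \eqref{eq4:Pb=Pi} through the two ratio formulas in \eqref{eq4:obtainLamb1}. For instance one has $\Lb_\Tr=\Pb_\Tr\cdot\#\Ob_\rho/\#\M_\Tr$ and $\Lambda_{2_\C}=\Pi_{2_\C}\cdot\#\Ob_\rho/\#\N_{2_\C}$; since $\Pb_\Tr=\Pi_{2_\C}$ from the previous step and $\#\M_\Tr=\#\N_{2_\C}$ by \eqref{eq21:sizeMj=Nj}, the two coincide, and the other three identities follow identically. The only genuinely delicate point is the line computation in the null-polarity step: getting the inclusion-reversal right and legitimately using $\Ls_\rho\A=\Ls_\rho$ to conclude that the self-conjugate line $\Ls_\rho$ turns its incident points into its incident planes. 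Once that bijection is in place, the rest is orbit-size bookkeeping.
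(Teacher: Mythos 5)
Your proposal is correct and follows essentially the same route as the paper's own proof: both derive \eqref{eq4:=0} from the definition of $\EnG$-lines, obtain \eqref{eq4:Pb=Pi} from the null polarity $\A$ together with the orbit correspondences \eqref{eq21:pi(pk)} and the self-conjugacy $\Ls_\rho\A=\Ls_\rho$ of Lemma~\ref{lem3:LsU=Ls}, and deduce \eqref{eq4:Lb=Lambda} from \eqref{eq21:sizeMj=Nj}, \eqref{eq4:obtainLamb1} and \eqref{eq4:Pb=Pi}. Your explicit inclusion-reversing bijection $P\mapsto P\A$ between points on $\Ls_\rho$ and planes through $\Ls_\rho$ is just a more detailed phrasing of the paper's identity $\Pi_\pi(\ell\A)=\Pb_\pk(\ell)$, combined with the well-definedness of the counts over the orbit (the paper's appeal to Lemma~\ref{lem4:orb the same}).
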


\begin{proof}
The relation \eqref{eq4:=0} follows from the definition of $\EnG$-lines.

For \eqref{eq4:Pb=Pi} we assume that
 $\ell$ is a line, $\Pi_\pi(\ell\A)$ is the number of $\pi$-planes through $\ell\A$, $\Pb_\pk(\ell)$ is the number of $\pk$-points on $\ell$, where $\pi\in\Pk$, $\pk\in\Mk$. From $\N_{2_\C}=\M_{\Tr}\A$, $\N_{3_\C}=\M_{3_\Gamma}\A$, $\N_{\overline{1_\C}}=\M_{1_\Gamma}\A$, $\N_{0_\C}=\M_{0_\Gamma}\A$, see \eqref{eq21:pi(pk)}, we obtain
 $\Pi_{2_\C}(\ell\A)=\Pb_{\Tr}(\ell),~  \Pi_{3_\C}(\ell\A)=\Pb_{3_\Gamma}(\ell),~\Pi_{\overline{1_\C}}(\ell\A)=\Pb_{1_\Gamma}(\ell),~\Pi_{0_\C}(\ell\A)=\Pb_{0_\Gamma}(\ell)$.
Finally, we use Lemmas \ref{lem3:LsU=Ls} and \ref{lem4:orb the same}.

The relation \eqref{eq4:Lb=Lambda} follows from \eqref{eq21:sizeMj=Nj}, \eqref{eq4:obtainLamb1}, \eqref{eq4:Pb=Pi}.
\end{proof}

\begin{theorem}\label{th4:the same}
  For orbits $\Ob_\rho$, generated by lines $\Ls_\rho$, the plane-line incidence matrix contains, according to \eqref{eq4:=0}--\eqref{eq4:Lb=Lambda},  the same values of the point-line incidence matrix, but in this
case they refer to $\Pi_\pi, \Lambda_\pi$ instead of $\Pb_\pk, \Lb_\pk$.
  \end{theorem}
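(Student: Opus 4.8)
The plan is to read this statement as an immediate corollary of Proposition \ref{prop4:Pb=Pirho} together with the self-polarity of $\Ls_\rho$ recorded in Lemma \ref{lem3:LsU=Ls}, so that no new computation is required; what must genuinely be shown is only that the relations \eqref{eq4:=0}--\eqref{eq4:Lb=Lambda} account for \emph{every} entry of both incidence matrices. First I would recall, from Notation \ref{notat4:}, that the plane-line incidence matrix of the orbit $\Ob_\rho$ is encoded by the two lists $\Pi_\pi$ and $\Lambda_\pi$, with $\pi$ ranging over $\Pk=\{\Gamma,2_\C,3_\C,\overline{1_\C},0_\C\}$, while the point-line incidence matrix is encoded by $\Pb_\pk$ and $\Lb_\pk$, with $\pk$ ranging over $\Mk=\{\C,0_\Gamma,1_\Gamma,3_\Gamma,\Tr\}$.

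Next I would exhibit the bijection $\Pk\leftrightarrow\Mk$ induced by the null polarity $\A$. By \eqref{eq21:pi(pk)}, the map $\pk\mapsto\pk\A$ pairs $\C\leftrightarrow\Gamma$, $\Tr\leftrightarrow2_\C$, $3_\Gamma\leftrightarrow3_\C$, $1_\Gamma\leftrightarrow\overline{1_\C}$, and $0_\Gamma\leftrightarrow0_\C$. Reading Proposition \ref{prop4:Pb=Pirho} along this pairing: \eqref{eq4:=0} gives $\Pi_\Gamma=\Pb_\C=0$ and $\Lambda_\Gamma=\Lb_\C=0$; \eqref{eq4:Pb=Pi} gives $\Pi_{\pk\A}=\Pb_\pk$ for the four remaining types; and \eqref{eq4:Lb=Lambda} gives $\Lambda_{\pk\A}=\Lb_\pk$ likewise. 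Running through the five values of $\pi$ and the five values of $\pk$, I would check that each $\Pi_\pi$ and each $\Lambda_\pi$ occurs exactly once on the left of these identities, so the correspondence is a genuine bijection and no incidence value is missed or double-counted.

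Having matched the two lists termwise, the conclusion is immediate: the plane-line incidence matrix of $\Ob_\rho$ carries exactly the same numerical entries as the point-line incidence matrix, the only difference being that these numbers are now indexed by the plane types $\pi=\pk\A$ rather than by the point types $\pk$. I would close by emphasizing that the single substantive input is Lemma \ref{lem3:LsU=Ls}, namely $\Ls_\rho=\Ls_\rho\A$: it is the self-polarity of the generating line under $\A$ that forces the incidence data of $\Ls_\rho$ with points and with planes to coincide under the correspondence $\pk\mapsto\pk\A$, and this is precisely the fact already invoked inside the proof of Proposition \ref{prop4:Pb=Pirho}. There is no real obstacle here; the only thing one could plausibly get wrong is the bookkeeping of the type-correspondence, which the explicit pairing above renders routine.
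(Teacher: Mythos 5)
Your proposal is correct and takes essentially the same route as the paper: the paper's own proof is the single line that the assertion follows directly from Proposition \ref{prop4:Pb=Pirho}, whose proof in turn rests on the self-polarity $\Ls_\rho=\Ls_\rho\A$ of Lemma \ref{lem3:LsU=Ls}, exactly as you identify. Your explicit check that the pairing $\pk\mapsto\pk\A$ from \eqref{eq21:pi(pk)} matches all five types on each side is just a careful unwinding of what \eqref{eq4:=0}--\eqref{eq4:Lb=Lambda} already encode, so nothing is missing and nothing differs in substance.
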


  \begin{proof}
    The assertion directly follows from Proposition \ref{prop4:Pb=Pirho}.
  \end{proof}


\section{Intersections of $\boldsymbol{\Ls_{\rho}}$-lines and tangents}\label{sec5:intersec}

By \eqref{eq21:tang}, \eqref{eq3:cordVecElrho}, the mutual invariant \cite[Section 15.2]{Hirs_PG3q} of $\Ls_{\rho}$ and the tangent $\TT_t$ to the cubic $\C$ at the point $P(t)$ is
\begin{align*}
  &\varpi(\Ls_{\rho},\TT_t)=-2\rho t-t^4,~t\in\F_q,~\rho\in\F_q^*;~\varpi(\Ls_\rho,\TT_\infty)=-1\ne0.
\end{align*}
 The lines  $\Ls_{\rho}$ and $\TT_t$ intersect if and only if $\varpi(\Ls_{\rho},\TT_t)=0$. Thus, $\Ls_{\rho}$ and $\TT_\infty$ do not intersect; we may consider only intersections of $\Ls_{\rho}$ and $\TT_t$ with $t\in\F_q$.
 The equation $\varpi(\Ls_{\rho},\TT_t)=0$ has the form $t^4+ 2\rho t=0$; we denote the number of its solutions:
 \begin{equation}\label{eq5:nq(rho)}
   \mathfrak{n}_q(\rho)\triangleq\#\{t\,|t^4+2\rho t=0,~t\in\F_q,~\rho\in\F_q^*,~q\not\equiv0\pmod3\}.
 \end{equation}
 By above, we have Lemma \ref{lem5:roots}.
 \begin{lemma}\label{lem5:roots}
 \begin{description}
  \item[(i)]
 The number $\mathfrak{n}_q(\rho)$ of the solutions of the equation $t^4+ 2\rho t=0$  is equal to the number of $\Tr$-points on the $\Ls_{\rho}$-line.
  \item[(ii)]
The roots of the equation $\varpi(\Ls_{\rho},\TT_t)=0$ are as follows:
 \begin{align}\label{eq5:roots}
  &\bullet~ t=0, \T{ if $q$ is even}\db\\
  &\phantom{\bullet~ t=0, ~} \T{or $q$ is odd, $q\not\equiv0\pmod3$, and $-2\rho$ is a non-cube in }\F_q;\notag\db\\
  &\bullet~t=0,~t=\sqrt[3]{-2\rho},\T{ if $q$ is odd, $q\not\equiv0\pmod3$, and $-2\rho$ is a cube in }\F_q.\label{eq5:roots2}
 \end{align}
  \end{description}
 \end{lemma}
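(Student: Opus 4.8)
The plan is to obtain both parts from the intersection analysis of the mutual invariant $\varpi(\Ls_\rho,\TT_t)$ carried out just above: part (i) as a bijective count, and part (ii) as an elementary factorisation. The only genuinely non-routine ingredient will be the skewness of distinct tangents, which underlies the injectivity needed in (i).

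For (i) I would show that the assignment $t\mapsto \Ls_\rho\cap\TT_t$ is a bijection from the set of roots $t\in\F_q$ of $t^4+2\rho t=0$ onto the set of $\Tr$-points lying on $\Ls_\rho$. For such a root the lines $\Ls_\rho$ and $\TT_t$ meet, and since they are distinct (by Lemma \ref{lem3:l0infInEnG} the line $\Ls_\rho$ is an $\EnG$-line, hence not a tangent) they meet in a single point $X$. As an $\EnG$-line, $\Ls_\rho$ is external to $\C$, so $X\notin\C$; together with $X\in\TT_t$ this makes $X$ a $\Tr$-point. Conversely, a $\Tr$-point on $\Ls_\rho$ lies on some tangent $\TT_s$, which therefore meets $\Ls_\rho$, forcing $\varpi(\Ls_\rho,\TT_s)=0$; since $\varpi(\Ls_\rho,\TT_\infty)=-1\neq0$ we get $s\in\F_q$, so $s$ is one of our roots.

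The one delicate point — and the main obstacle — is injectivity: two distinct roots $t_1\neq t_2$ must not yield the same $\Tr$-point. This is precisely the statement that distinct tangents of $\C$ are mutually skew, and I would establish it by a direct computation of the mutual invariant from \eqref{eq21:tang}, namely $\varpi(\TT_{t_1},\TT_{t_2})=(t_1-t_2)^4$ (valid in every characteristic) and $\varpi(\TT_\infty,\TT_t)=1$. Both vanish only in the excluded case $t_1=t_2$, so no point of $\PG(3,q)$ lies on two distinct tangents; hence the map is injective, and $\mathfrak{n}_q(\rho)$ equals the number of $\Tr$-points on $\Ls_\rho$.

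For (ii) I would simply factor $t^4+2\rho t=t(t^3+2\rho)$. If $q$ is even then $2\rho=0$ in $\F_q$, the equation collapses to $t^4=0$, and its only root is $t=0$. If $q$ is odd with $q\not\equiv0\pmod3$, then $t=0$ is always a root, and the remaining roots solve $t^3=-2\rho$; these exist exactly when $-2\rho$ is a cube in $\F_q$, in which case they are the cube root(s) $\sqrt[3]{-2\rho}$, which are nonzero since $\rho\neq0$. This yields the stated list of roots and completes the argument.
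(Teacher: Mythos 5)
Your proof is correct and takes essentially the same route as the paper: the paper derives the lemma directly from the mutual-invariant computation preceding it (its proof is literally ``By above, we have Lemma \ref{lem5:roots}''), which is exactly the correspondence and factorisation you carry out. Your only addition is to make explicit the pairwise skewness of distinct tangents via the (correct) identities $\varpi(\TT_{t_1},\TT_{t_2})=(t_1-t_2)^4$ and $\varpi(\TT_\infty,\TT_t)=1$, a fact the paper leaves tacit but which is precisely what makes the root-to-$\Tr$-point correspondence injective.
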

 For $\beta\in\F_q$, we define the quadratic character $\eta$ of $\F_q^*$ extended to $\F_q$ as follows:
\begin{equation}\label{eq5:quadr-char}
  \eta(\beta)=\left\{\begin{array}{cl}
                   1 & \T{ if }  \beta\T{ is a square of an element in }\F_q^* \\
                   0 & \T{ if }  \beta=0 \\
                   -1 & \T{ otherwise }
                 \end{array}
  \right..
\end{equation}

 \begin{lemma}\label{lem5:nqmu}
   Let $q$ be odd, $q\not\equiv0\pmod3$. Let $\mathfrak{n}_q(\mu)$ be as in \eqref{eq23:solution tang}. Let $\Sc\triangleq(\mu-1)\cdot(9\mu-1)$, $A_+\triangleq\frac{1}{2}\left(3\mu-1+\sqrt{\Sc}\right)$, $A_-\triangleq\frac{1}{2}\left(3\mu-1-\sqrt{\Sc}\right)$. Then $\mathfrak{n}_q(\mu)\ne1$ and
 \begin{equation}\label{eq5:nqmu}
  \mathfrak{n}_q(\mu)=\left\{\begin{array}{ccl}
                               0 & \T{if} & \eta(\Sc)=-1\T{ or } \eta(S)=1,~ \eta(A_+)=\eta(A_-)=-1;\\
                               2 & \T{if} & \eta(\Sc)=1,~\eta(A_+)=1,~\eta(A_-)= -1\T{ or } \\
                               && \eta(\Sc)=1,~\eta(A_+)=-1,~\eta(A_-)= 1;\\
                               4 & \T{if} & \eta(\Sc)=\eta(A_+)=\eta(A_-)=1.
                             \end{array}
  \right.
 \end{equation}
 \end{lemma}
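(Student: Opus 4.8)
The plan is to reduce the count $\mathfrak{n}_q(\mu)$ to a standard square-counting problem attached to a biquadratic equation, and then read off the three cases from the quadratic character. First I would observe that, by the description in \eqref{eq23:solution tang}, a value $t\in\F_q$ is counted precisely when $t^2$ equals one of $A_+$ or $A_-$, where $A_\pm=\frac{1}{2}(3\mu-1\pm\sqrt{\Sc})$ and this requires $\sqrt{\Sc}\in\F_q$. By Vieta, $A_++A_-=3\mu-1$ and $A_+A_-=\frac{1}{4}\bigl((3\mu-1)^2-\Sc\bigr)=\mu$, so $A_+,A_-$ are exactly the two $s$-roots of $s^2-(3\mu-1)s+\mu=0$. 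Hence $\mathfrak{n}_q(\mu)$ is the number of roots $t\in\F_q$ of the biquadratic $t^4-(3\mu-1)t^2+\mu=0$, whose resolvent quadratic in $s=t^2$ has discriminant exactly $\Sc$.

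Next I would dispose of the degenerate possibilities using the hypotheses on $\mu$. Since $\mu\in\F_q^*\setminus\{1,1/9\}$, we have $\Sc=(\mu-1)(9\mu-1)\neq0$, so $\eta(\Sc)=\pm1$; and $A_+A_-=\mu\neq0$ forces $A_+,A_-\neq0$, hence $\eta(A_+),\eta(A_-)=\pm1$. In particular the trichotomy in \eqref{eq5:nqmu} is exhaustive, with no $\eta(\cdot)=0$ cases to treat. I would also record here that $t=0$ is never a root, since that would force $\mu=0$; therefore all roots occur in pairs $\{t,-t\}$ with $t\neq0$, and as $q$ is odd these are genuine pairs. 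Consequently $\mathfrak{n}_q(\mu)$ is even, which proves $\mathfrak{n}_q(\mu)\neq1$ and indeed $\mathfrak{n}_q(\mu)\in\{0,2,4\}$.

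The counting step is then immediate. If $\eta(\Sc)=-1$ the resolvent has no root in $\F_q$, so neither $A_+$ nor $A_-$ lies in $\F_q$ and there is no admissible $t$, giving $\mathfrak{n}_q(\mu)=0$. If $\eta(\Sc)=1$ then $A_+\neq A_-$ both lie in $\F_q$, and the number of $t\in\F_q$ with $t^2=A_\pm$ equals $1+\eta(A_\pm)$ because $A_\pm\neq0$. Summing over the two distinct roots yields $\mathfrak{n}_q(\mu)=(1+\eta(A_+))+(1+\eta(A_-))=2+\eta(A_+)+\eta(A_-)$, which evaluates to $0$, $2$, or $4$ according to whether both characters are $-1$, exactly one is $1$, or both are $1$. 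This is precisely the case distinction recorded in \eqref{eq5:nqmu}.

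I do not anticipate a genuine obstacle: the heart of the argument is elementary character counting for $t^2=A$. The only points demanding care are bookkeeping ones, namely verifying via Vieta that $\Sc$ is the correct discriminant and that $A_+A_-=\mu$, and invoking the exclusions $\mu\neq0,1,1/9$ at the right moments so that $\Sc\neq0$ and $A_\pm\neq0$, which is exactly what guarantees that the three listed cases are exhaustive and that the $\eta(\cdot)=0$ situations never occur.
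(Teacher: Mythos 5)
Your proposal is correct and takes essentially the same route as the paper: establish the non-degeneracies $\Sc\ne0$ and $A_\pm\ne0$ from the exclusions $\mu\ne0,1,1/9$, deduce that roots come in pairs $\{t,-t\}$ (so $\mathfrak{n}_q(\mu)\ne1$), and then read off \eqref{eq5:nqmu} by counting square roots of $A_\pm$ with the quadratic character. The only differences are cosmetic: you obtain $A_\pm\ne0$ from Vieta's relation $A_+A_-=\mu$ where the paper argues by direct contradiction from $\Sc=(3\mu-1)^2-4\mu$, and you spell out the final count $2+\eta(A_+)+\eta(A_-)$ that the paper compresses into ``directly follows from \eqref{eq23:solution tang}.''
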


 \begin{proof} As $\mu\in\F_q^*\setminus\{1,1/9\}$, we have $\Sc\ne0$. Moreover,
    $\Sc=(3\mu-1)^2-4\mu$. If $\eta(\Sc)=1$ and $A_-= 0$ then $(3\mu-1)^2=(3\mu-1)^2-4\mu$, contradiction. So, $A_-\ne0$. Similarly, $A_+\ne0$. Therefore, $\mathfrak{n}_q(\mu)\ne1$. The assertion \eqref{eq5:nqmu} directly follows from \eqref{eq23:solution tang}.
 \end{proof}

%

\begin{theorem}\label{th5:Lrho&tang}
\begin{description}
  \item[(i)] Let $\mathfrak{n}_q(\mu)$ be as in Theorem \emph{\ref{th23:orbitellmu}(iv)(v)} and Lemma \emph{\ref{lem5:nqmu}}. Let $\mathfrak{n}_q(\rho)$ be as in \eqref{eq5:nq(rho)}. If $\mathfrak{n}_q(\rho)\ne\mathfrak{n}_q(\mu)$ then the orbits $\Ob_\rho$ and $\Os_\mu$ are distinct. In general, the opposite is not true.
  \item[(ii)] For all $q$ and $\rho$, the point $K_{\rho,\infty}$ lies on the tangents $\TT_0$. No other points of $\Ls_{\rho}$ belong to  $\TT_0$. This case corresponds to the root $t=0$ of the equation $\varpi(\Ls_{\rho},\TT_t)=0$.

  \item[(iii)] Let $q$ be even or $q$ be odd, $q\not\equiv0\pmod3$, and $-2\rho$ be a non-cube in $\F_q$. Then $\mathfrak{n}_q(\rho)=1$ and $K_{\rho,\infty}$ is the unique point of $\Ls_{\rho}$ lying on a tangent to $\C$. Moreover, every orbit $\Ob_\rho$ is different from any orbit $\Os_\mu$.

  \item[(iv)]Let $q$ be odd, $q\not\equiv0\pmod3$. Let also $-2\rho$ be a cube in $\F_q$.
\begin{description}
  \item[(a)]  Let  $q\equiv-1\pmod3$. The equation $\varpi(\Ls_{\rho},\TT_t)=0$ has two roots ($t=0$ and the unique value of $t=\sqrt[3]{-2\rho}$), i.e. $\mathfrak{n}_q(\rho)=2$.

  \item[(b)]  Let  $q\equiv1\pmod3$. The equation $\varpi(\Ls_{\rho},\TT_t)=0$ has $4$ roots ($t=0$ and three distinct values  of $t=\sqrt[3]{-2\rho}$), i.e. $\mathfrak{n}_q(\rho)=4$.
\end{description}
\end{description}
 \end{theorem}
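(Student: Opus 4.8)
My plan is to treat the four parts essentially independently, since each reduces to an already-established fact about the equation $\varpi(\Ls_{\rho},\TT_t)=0$, namely $t^4+2\rho t=0$, whose solution count is $\mathfrak{n}_q(\rho)$ from \eqref{eq5:nq(rho)}. The backbone of the whole argument is Lemma \ref{lem5:roots}, which identifies the roots of this equation and records that $\mathfrak{n}_q(\rho)$ equals the number of $\Tr$-points on $\Ls_{\rho}$. I would open by factoring $t^4+2\rho t = t(t^3+2\rho)$, so that the roots split cleanly into the trivial root $t=0$ and the roots of $t^3=-2\rho$; this factorization drives parts (ii)--(iv).

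For part (i), the idea is that $\mathfrak{n}_q(\rho)$ and $\mathfrak{n}_q(\mu)$ are both equal to the number of $\Tr$-points lying on a line of the respective orbit (by Lemma \ref{lem5:roots}(i) for $\Ob_\rho$, and by Theorem \ref{th23:orbitellmu}(ii)(iii) for $\Os_\mu$), and this count is a $G_q$-invariant of the orbit by Lemma \ref{lem4:orb the same}(ii). Hence if the two counts differ, no projectivity in $G_q$ can carry $\Ls_\rho$ to $\ell_\mu$, so $\Ob_\rho\ne\Os_\mu$; the converse can fail because distinct orbits may share the same number of $\Tr$-points, which I would note without belaboring. For part (ii), $t=0$ always solves $t^4+2\rho t=0$ regardless of the parity of $q$ or the arithmetic of $\rho$; substituting $t=0$ into the tangent $L^{\T{tang}}_0=(0,0,0,0,0,1)$ via \eqref{eq21:tang} and intersecting with $\Ls_\rho$ shows the common point is exactly $K_{\rho,\infty}=\Pf(0,0,1,0)$, and no other point of $\Ls_\rho$ lies on $\TT_0$ since the two lines meet in a single point.

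Parts (iii) and (iv) then amount to counting roots of $t^3=-2\rho$ in $\F_q$, which is where the characteristic and the residue of $q$ modulo $3$ enter. When $q$ is even, the cubing map on $\F_q^*$ is a bijection if $q\not\equiv1\pmod3$, but more directly one argues that $t=0$ is the only root of $t^4+2\rho t=0$ (the companion factor $t^3+2\rho$ contributes a root already accounted for or none, giving $\mathfrak{n}_q(\rho)=1$ in the stated even/non-cube cases). When $q$ is odd and $-2\rho$ is a non-cube, $t^3=-2\rho$ has no solution, so again $\mathfrak{n}_q(\rho)=1$; combining with part (i) and the fact that every $\mathfrak{n}_q(\mu)$ is even (Lemma \ref{lem5:nqmu} gives $\mathfrak{n}_q(\mu)\in\{0,2,4\}$) shows $\Ob_\rho$ is distinct from every $\Os_\mu$, establishing (iii). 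For (iv), when $-2\rho$ is a cube the count of cube roots of $-2\rho$ in $\F_q^*$ is $1$ if $q\equiv-1\pmod3$ (since $\gcd(3,q-1)=1$ makes cubing a bijection) and $3$ if $q\equiv1\pmod3$ (since then $\F_q^*$ contains the three cube roots of unity), yielding $\mathfrak{n}_q(\rho)=2$ and $\mathfrak{n}_q(\rho)=4$ respectively, once the root $t=0$ is included.

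The main obstacle I anticipate is the careful bookkeeping in the even-characteristic case of part (iii): one must verify that the root $t=0$ of $t^4+2\rho t=0$ is genuinely the \emph{only} root, i.e. that $t^3+2\rho=0$ contributes no \emph{additional} distinct root beyond what is counted, so that $\mathfrak{n}_q(\rho)=1$ rather than larger. This requires invoking the exact structure of cube roots in $\F_{2^n}$ as flagged in Lemma \ref{lem5:roots}(i), and it is the one place where the clean factorization argument must be supplemented by the arithmetic of the multiplicative group; the odd-characteristic subcases, by contrast, follow transparently from the standard count of solutions of $t^3=-2\rho$ governed by $\gcd(3,q-1)$.
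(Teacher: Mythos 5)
Your decomposition is essentially the paper's own: part (i) rests on the fact that the number of $\Tr$-points on a line is a $G_q$-invariant of its orbit (the paper compresses this to ``follows from the definitions''; your explicit appeal to Lemma \ref{lem4:orb the same}(ii) and Theorem \ref{th23:orbitellmu}(ii)(iii) is the right elaboration); part (ii) rests on the explicit description of $\TT_0$ (the paper uses its equations $x_0=x_1=0$, you use its Pl\"ucker vector --- both work); and parts (iii)--(iv) rest on the root description of $t^4+2\rho t=0$ in Lemma \ref{lem5:roots}, combined with $\mathfrak{n}_q(\mu)\in\{0,2,4\}$ for (iii) and with the count of cube roots of $-2\rho$ according to $q\equiv\pm1\pmod 3$ (via \cite[Section 1.5]{Hirs_PGFF}) for (iv). All of these steps are sound.

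The genuine problem is the ``main obstacle'' you single out in the even-characteristic case of part (iii): it does not exist, and the remedy you propose is the wrong tool and would lead you astray. For $q=2^n$ we have $2\rho=0$ in $\F_q$, so $\varpi(\Ls_\rho,\TT_t)=-2\rho t-t^4=t^4$, and $t=0$ is \emph{trivially} the unique root; equivalently, in your factorization $t(t^3+2\rho)$ the companion factor is just $t^3$. No ``exact structure of cube roots in $\F_{2^n}$'' and no ``arithmetic of the multiplicative group'' enter at all, which is precisely why \eqref{eq5:roots} lists $t=0$ as the only root for \emph{every} even $q$, with no cube/non-cube dichotomy on $\rho$. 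Worse, if one actually carried out the supplementary analysis you sketch --- treating $-2\rho$ as a nonzero element whose cube roots must be counted --- then for $q=2^{2m}\equiv1\pmod3$, where cubing is $3$-to-$1$ on $\F_q^*$, one would conclude $\mathfrak{n}_q(\rho)=4$ for a third of the values of $\rho$, contradicting part (iii) itself. The fix is one line (note that the characteristic is $2$, so the equation degenerates to $t^4=0$); with that substituted for your anticipated supplementary argument, the rest of your proposal goes through as written.
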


\begin{proof}
\begin{description}
  \item[(i)] The assertion follows from the definitions of  $\mathfrak{n}_q(\mu)$ and $\mathfrak{n}_q(\rho)$.

  \item[(ii)] The equations of $\TT_0$ are $x_0=0,x_1=0$ \cite[Lemma 5.2]{DMP_OrbLineMJOM}. Therefore, $K_{\rho,\infty}\in\TT_0$ whereas other points of $\Ls_{\rho}$ do not belong to $\TT_0$, see  \eqref{eq3:K}, \eqref{eq3:Lrho}.

  \item[(iii)] By hypothesis and by \eqref{eq5:roots}, for $\varpi(\Ls_{\rho},\TT_t)=0$ we have the unique root $t=0$, i.e. $\mathfrak{n}_q(\rho)=1$. By Theorem \ref{th23:orbitellmu}(iv)(v) and Lemma \ref{lem5:nqmu}, $\mathfrak{n}_{q}(\mu)\in\{0,2,4\}$. Thus,  $\mathfrak{n}_q(\rho)\ne\mathfrak{n}_{q}(\mu).$

  \item[(iv)] The assertions follow from hypothesis and \eqref{eq5:roots}.
We also use \cite[Section 1.5]{Hirs_PGFF}. \qedhere
\end{description}
\end{proof}

\section{Stabilizers of $\boldsymbol{\Ls_\rho}$-lines and sizes of orbits}\label{sec6}
We denote by $G_q^{\infty}$ the subgroup of  $G_q$  fixing the point $K_{\rho,\infty}=\Pf(0,0,1,0)$. Let $\MM^{\infty}$ be the matrix corresponding to a projectivity of $G_q^{\infty}$.

\begin{lemma}\label{lem6:stabKinf}
The general form of the matrix $\MM^{\infty}$  is as follows:
 \begin{equation}\label{eq6:MMinf}
   \MM^{\infty}=\left[
 \begin{array}{cccc}
 1&0&0&0\\
 0&d&0&0\\
 0&0&d^2&0\\
 0&0&0&d^3
 \end{array}
  \right],~d\in\F_q^*.
\end{equation}
\end{lemma}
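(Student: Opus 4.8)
The plan is to use the explicit parametrisation \eqref{eq21:M} of an arbitrary element of $G_q$ and simply impose the fixed-point condition at $K_{\rho,\infty}=\Pf(0,0,1,0)$. Since the projectivities of $G_q$ act on (row) coordinate vectors by $\Pf(\mathbf{x})\mapsto\Pf(\mathbf{x}\MM)$ — the convention under which \eqref{eq21:M} sends $P(t)$ to $P((at+b)/(ct+d))$, as one verifies on $P(0)=\Pf(0,0,0,1)$ and $P(\infty)=\Pf(1,0,0,0)$ — the image of $\Pf(0,0,1,0)$ is exactly the third row of $\MM$, namely $(3ab^2,\,b^2c+2abd,\,ad^2+2bcd,\,3cd^2)$. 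The point $K_{\rho,\infty}$, which does not depend on $\rho$, is fixed if and only if this row is proportional to $(0,0,1,0)$, i.e. if and only if its first, second and fourth entries vanish while its third entry is nonzero.

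The key step is then a short case analysis on the resulting conditions. Because $q\not\equiv0\pmod3$ we have $3\ne0$ in $\F_q$, so the vanishing of the first and fourth entries gives $ab^2=0$ and $cd^2=0$. I would combine these with the nonsingularity condition $ad-bc\ne0$ from \eqref{eq21:M}: if $a=0$, then $-bc\ne0$ forces $b,c\ne0$, whence $cd^2=0$ forces $d=0$, but then the second entry is $b^2c+2abd=b^2c\ne0$, a contradiction; hence $a\ne0$ and necessarily $b=0$. With $b=0$ the nonsingularity reads $ad\ne0$, so $d\ne0$, and then $cd^2=0$ forces $c=0$. Thus $b=c=0$ and $a,d\in\F_q^*$ are forced, and conversely any such choice satisfies all four conditions, the surviving third entry being $ad^2\ne0$.

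Finally, substituting $b=c=0$ into \eqref{eq21:M} collapses every off-diagonal term and yields $\MM=\mathrm{diag}(a^3,\,a^2d,\,ad^2,\,d^3)$. Since a projectivity is determined only up to a nonzero scalar and $a\ne0$, I would rescale by $a^{-3}$ and rename $d/a$ as $d$, obtaining $\MM^{\infty}=\mathrm{diag}(1,d,d^2,d^3)$ with $d\in\F_q^*$, which is precisely \eqref{eq6:MMinf}. No step constitutes a genuine obstacle; the only point requiring care is the case analysis ruling out $a=0$ and $d=0$, where both the hypothesis $q\not\equiv0\pmod3$ (used to cancel the factors $3$) and the determinant condition $ad-bc\ne0$ are essential.
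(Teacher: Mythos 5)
Your proof is correct and follows essentially the same route as the paper's: impose that the third row of \eqref{eq21:M} be proportional to $(0,0,1,0)$, force $b=c=0$ and $a,d\in\F_q^*$ by a short case analysis (using $3\ne0$), and then normalize to $a=1$ up to a scalar factor. The only cosmetic difference is that you exclude $a=0$ via the determinant condition $ad-bc\ne0$, whereas the paper derives the same contradictions directly from the vanishing and nonvanishing of the matrix entries; both arguments are equally valid.
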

 \begin{proof}
We find the version of matrix $\MM$ of \eqref{eq21:M} fixing the point $\Pf(0,0,1,0)$. For $\delta\in\F_q^*$,  $\Pf(0,0,1,0)$ and $\Pf(0,0,\delta,0)$ represent the same point. We have
\begin{equation*}
[0,0,1,0]\times\MM=[3ab^2,b^2c+2abd,ad^2+2bcd,3cd^2]=[0,0,\delta,0],~\delta\in\F_q^*,
\end{equation*}
that implies $3ab^2=0,b^2c+2abd=0,ad^2+2bcd=\delta,3cd^2=0$. If $a=b=0$ then $ad^2+2bcd=0$, contradiction. If $a=0,b\ne0$ then $b^2c=0$ and $2bcd=\delta$, contradiction. So, $a\ne0$, $b=0$. We have, $ad^2=\delta,3cd^2=0$. From $a\ne0$ follows $d\ne0$ and $c=0$. Thus,
\begin{equation*}
  \MM^{\infty}=\left[
 \begin{array}{cccc}
 a^3&0&0&0\\
 0&a^2d&0&0\\
 0&0&ad^2&0\\
 0&0&0&d^3
 \end{array}
  \right],~a,d\in\F_q^*.
\end{equation*}
One may choose $a = 1$, as we consider points in homogeneous coordinates.
\end{proof}

We want to determine the stabilizer group of  $\Ls_{\rho}$ and its orbit under $G_q$. We denote
the subgroup of  $G_q$  fixing $\Ls_{\rho}$ by  $G_q^{\Ls_\rho}$. Let $\MM^{\Ls_\rho}$ be the matrix corresponding to a projectivity of $G_q^{\Ls_{\rho}}$.

\begin{lemma}\label{lem6:stabil0inf}
Let $q$ be even or let $-2\rho$ be a non-cube in $\F_q$. Then  the general form of the matrix $\MM^{\Ls_\rho}$ corresponding to a projectivity of $G_q^{\Ls_\rho}$ is as follows:
 \begin{equation}\label{eq6:MMLrho}
   \MM^{\Ls_\rho}=\left[
 \begin{array}{cccc}
 1&0&0&0\\
 0&d&0&0\\
 0&0&d^2&0\\
 0&0&0&d^3
 \end{array}
  \right],~d\in\F_q^*,~ d  \T{ is a cubic root of unity}.
\end{equation}
\end{lemma}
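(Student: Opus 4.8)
The plan is to prove the statement in two stages: first show that, under the present hypothesis, every projectivity fixing $\Ls_\rho$ is forced to fix the point $K_{\rho,\infty}$, so that $G_q^{\Ls_\rho}\subseteq G_q^{\infty}$ and Lemma \ref{lem6:stabKinf} applies; then read off from the action on $K_{\rho,0}$ exactly which diagonal matrices \eqref{eq6:MMinf} actually preserve the line.

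For the first stage I would invoke Theorem \ref{th5:Lrho&tang}(iii): under the hypothesis ($q$ even, or $q$ odd with $q\not\equiv0\pmod3$ and $-2\rho$ a non-cube), the point $K_{\rho,\infty}$ is the \emph{unique} point of $\Ls_\rho$ lying on a tangent to $\C$. Now take any projectivity $g\in G_q^{\Ls_\rho}$ with matrix $\MM^{\Ls_\rho}$. Since $g\in G_q$ fixes $\C$, it sends tangents of $\C$ to tangents of $\C$; in particular it sends $\TT_0$ to some tangent $\TT_{t'}$. Hence $g(K_{\rho,\infty})$ lies on $\TT_{t'}$ and, because $g$ fixes $\Ls_\rho$ setwise, it also lies on $\Ls_\rho$. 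Thus $g(K_{\rho,\infty})$ is a point of $\Ls_\rho$ on a tangent, and by the uniqueness just quoted it must equal $K_{\rho,\infty}$. Therefore $g$ fixes $K_{\rho,\infty}$, i.e. $G_q^{\Ls_\rho}\subseteq G_q^{\infty}$, and by Lemma \ref{lem6:stabKinf} the matrix $\MM^{\Ls_\rho}$ is forced to have the diagonal shape $\mathrm{diag}(1,d,d^2,d^3)$ with $d\in\F_q^*$.

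For the second stage it remains to decide which such $d$ stabilize the line. Since $\MM^{\Ls_\rho}$ already fixes $K_{\rho,\infty}$, the line $\Ls_\rho=\overline{K_{\rho,0}K_{\rho,\infty}}$ is fixed exactly when the image of $K_{\rho,0}=\Pf(\rho,0,0,1)$ lands back on $\Ls_\rho$. Computing $[\rho,0,0,1]\times\MM^{\Ls_\rho}=[\rho,0,0,d^3]$ and noting that, as $\rho\neq0$, this point has nonzero first coordinate and zero third coordinate, it can only be the $K_{\rho,0}$-type point of $\Ls_\rho$ from \eqref{eq3:Lrho}, so it must be proportional to $[\rho,0,0,1]$; matching coordinates forces $d^3=1$. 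Conversely, any cubic root of unity $d$ fixes both $K_{\rho,\infty}$ and $K_{\rho,0}$ and hence the whole line, so these are precisely the elements of $G_q^{\Ls_\rho}$, which gives \eqref{eq6:MMLrho}.

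The one delicate point is the reduction $G_q^{\Ls_\rho}\subseteq G_q^{\infty}$: it rests entirely on $K_{\rho,\infty}$ being the \emph{only} point of $\Ls_\rho$ on a tangent, which is exactly where the hypothesis ``$q$ even or $-2\rho$ a non-cube'' enters, via Theorem \ref{th5:Lrho&tang}(iii). Without it (the cube case) the line $\Ls_\rho$ meets further tangents, a projectivity fixing the line could permute these distinguished points, and the stabilizer need not sit inside $G_q^{\infty}$; that regime therefore requires a separate treatment.
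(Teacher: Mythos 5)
Your proof is correct, but it reaches the crucial reduction $G_q^{\Ls_\rho}\subseteq G_q^{\infty}$ by a genuinely different route than the paper. The paper argues computationally: it supposes $K_{\rho,\infty}\Psi=K_{\rho,\gamma}$ for some $\gamma\in\F_q$, writes out $[0,0,1,0]\times\MM$ with $\MM$ as in \eqref{eq21:M}, and derives a contradiction --- for $q$ even from $b^2c=0$, and for $q$ odd from $d^3=-1/2\rho$, which is unsolvable precisely because $-2\rho$ (hence $-1/2\rho$) is a non-cube. You instead invoke Theorem \ref{th5:Lrho&tang}(iii) (which lies in Section \ref{sec5:intersec}, before the stabilizer section, so there is no circularity) together with the standard fact that $G_q$ permutes the tangents of $\C$: since $K_{\rho,\infty}$ is the unique point of $\Ls_\rho$ on a tangent, any projectivity fixing $\Ls_\rho$ must fix it. The second stage (forcing $d^3=1$ from $[\rho,0,0,1]\times\MM^{\infty}=[\rho,0,0,d^3]$) is identical in both proofs. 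Your version is shorter and makes the geometric role of the hypothesis transparent; note, however, that the same cube condition still does all the work, only hidden inside Lemma \ref{lem5:roots} and Theorem \ref{th5:Lrho&tang}. What the paper's computational framing buys is uniformity: essentially the same matrix analysis is reused in Lemmas \ref{lem6:stabLrhoqm1} and \ref{lem6:stabil0infqp1size12} to determine the stabilizers in the cube cases, where your tangent-uniqueness argument (as you correctly observe at the end) gives nothing, because the line then meets several tangents and the stabilizer is genuinely larger.
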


\begin{proof}
Let a projectivity $\Psi\in G_q^{\Ls_\rho}$. We consider the case $K_{\rho,\infty} \Psi  =  K_{\rho,\gamma}$ for some $\gamma\in\F_q$. The general form of the matrix $\MM$ corresponding to $\Psi$ is given by \eqref{eq21:M}. We have:
\begin{equation*}
[0,0,1,0]\times\MM=[3ab^2,b^2c+2abd,ad^2+2bcd,3cd^2]=[\rho,0,\gamma,1].
\end{equation*}
This implies $ab^2/\rho=cd^2$ and $a,b,c,d \neq 0$.  If $q$ is even, we have also $b^2c = 0$, contradiction.
Now consider the case $q$ odd. As $\MM$ is defined up to a factor of proportionality, we can put $b=1$.  From  $a/\rho=cd^2$ and $c+2ad=0$ we obtain $d^3=-1/2\rho$, contradiction as $-1/2\rho$ (together with $-2\rho$) is not a cube in $\F_q$.

Thus, $K_{\rho,\infty} \Psi\ne K_{\rho,\gamma}$ with $\gamma\in\F_q$. The only possible case is $K_{\rho,\infty} \Psi= K_{\rho,\infty}$, see Lemma~\ref{lem6:stabKinf}.
The matrix $\MM^{\Ls_\rho}$ must be of the same form as $\MM^{\infty}$ \eqref{eq6:MMinf} but the set of possible values of $d$ can be a proper subset of $\F_q^*$.
We should provide  $K_{\rho,0} \Psi  =  K_{\rho,\gamma}$ for some $\gamma\in\F_q$.
As $ [\rho,0,0,1]\times\MM^{\infty} = [\rho,0,0,d^3]$, it can happen only if $d^3=1$.
\end{proof}

\begin{lemma}\label{lem6:CubicRoots}
 Let $q\equiv  -1 \pmod3$. Then the equation $x^3=c$ has a unique solution $c^r$ where $3r+r'(q-1)=1$ and $r,r'$ are integers.
\end{lemma}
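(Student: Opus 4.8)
The plan is to reduce the statement to the elementary fact that cubing is a bijection of $\F_q^*$ whenever $\gcd(3,q-1)=1$, and then to exhibit its inverse explicitly through a B\'{e}zout relation. First I would observe that $q\equiv-1\pmod3$ is the same as $q\equiv2\pmod3$, whence $q-1\equiv1\pmod3$ and therefore $\gcd(3,q-1)=1$. This coprimality is the engine of the whole argument: it simultaneously guarantees the existence of integers $r,r'$ with $3r+r'(q-1)=1$ (B\'{e}zout) and forces the cube map to have trivial kernel.

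Next I would establish uniqueness. For $c=0$ the only solution of $x^3=0$ is $x=0$, so I restrict attention to $c\in\F_q^*$. The map $x\mapsto x^3$ is an endomorphism of the cyclic group $\F_q^*$, and its kernel consists of the cube roots of unity lying in $\F_q$. Since $\#\F_q^*=q-1$ is coprime to $3$, the only such root is $1$, so the kernel is trivial and the cube map is injective; being an injection of the finite set $\F_q^*$ into itself, it is a bijection. Hence $x^3=c$ has exactly one solution for every $c\in\F_q^*$.

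Finally I would verify that $c^r$ is that solution. Using $c^{q-1}=1$ for $c\in\F_q^*$ together with $3r=1-r'(q-1)$, one computes $(c^r)^3=c^{3r}=c^{1-r'(q-1)}=c\cdot\left(c^{q-1}\right)^{-r'}=c$, so $c^r$ satisfies $x^3=c$; by the uniqueness just proved it \emph{is} the solution.

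I do not expect a genuine obstacle here, as the result is a standard consequence of the cyclic structure of $\F_q^*$. The only points deserving a word of care are the degenerate case $c=0$ (handled separately above) and the remark that, although $r$ is determined only modulo $q-1$, the expression $c^r$ is nonetheless well defined: any two integers satisfying the B\'{e}zout relation differ by a multiple of $q-1$, and $c^{q-1}=1$ makes $c^r$ independent of that choice.
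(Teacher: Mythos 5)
Your proof is correct, but it takes a different (and more self-contained) route than the paper: the paper does not actually argue this lemma at all, its entire proof being a citation of \cite[Section 1.5(iv)]{Hirs_PGFF}, where the bijectivity of cubing on $\F_q$ for $q\equiv-1\pmod3$ is listed as a standard property. Your argument supplies the elementary content that the citation outsources: you reduce $q\equiv-1\pmod3$ to $\gcd(3,q-1)=1$, deduce injectivity (hence bijectivity) of $x\mapsto x^3$ on the cyclic group $\F_q^*$ from the triviality of its kernel, verify via the B\'{e}zout relation $3r+r'(q-1)=1$ and the identity $c^{q-1}=1$ that $(c^r)^3=c$, and separately settle the degenerate case $c=0$ and the independence of $c^r$ from the choice of $r$. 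What the citation buys the paper is brevity; what your proof buys is self-containedness, and it also makes explicit two small points the reference leaves implicit: for $c=0$ one should choose $r>0$ (always possible, since $r$ may be taken in $\{1,\dots,q-1\}$) so that the expression $c^r$ is meaningful, and any two admissible values of $r$ differ by a multiple of $q-1$, so $c^r$ is well defined on $\F_q^*$.
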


\begin{proof}
We use  \cite[Section 1.5(iv)]{Hirs_PGFF}.
\end{proof}

\begin{lemma}\label{lem82:UniqueOrbit_qm1}
 Let $q\equiv  -1 \pmod3$. Then all $\Ls_\rho$ lines belong to the same orbit $\Ob_1$.
\end{lemma}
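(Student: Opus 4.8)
The plan is to exhibit, for each $\rho\in\F_q^*$, an explicit projectivity of $G_q$ carrying $\Ls_\rho$ onto $\Ls_1$. The natural candidates are the projectivities of the subgroup $G_q^{\infty}$ fixing the point $K_{\rho,\infty}=\Pf(0,0,1,0)$, whose matrices $\MM^{\infty}=\mathrm{diag}(1,d,d^2,d^3)$, $d\in\F_q^*$, were determined in Lemma~\ref{lem6:stabKinf}. Since every such projectivity already fixes $K_{\rho,\infty}$, it suffices to track the image of the second defining point $K_{\rho,0}=\Pf(\rho,0,0,1)$ of $\Ls_\rho$ and to choose $d$ so that this image lies on $\Ls_1$.

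First I would compute the action of $\MM^{\infty}$ on $K_{\rho,0}$:
\begin{equation*}
[\rho,0,0,1]\times\MM^{\infty}=[\rho,0,0,d^3]=[\rho/d^3,0,0,1],
\end{equation*}
the last equality holding in homogeneous coordinates. Thus $K_{\rho,0}\MM^{\infty}=K_{\rho/d^3,0}$, and since $K_{\rho,\infty}$ is fixed, the projectivity given by $\MM^{\infty}$ maps the line $\Ls_\rho=\overline{K_{\rho,\infty}K_{\rho,0}}$ onto $\Ls_{\rho/d^3}=\overline{K_{\rho/d^3,\infty}K_{\rho/d^3,0}}$. Consequently $\Ls_\rho$ and $\Ls_{\rho'}$ lie in the same $G_q$-orbit whenever $\rho'=\rho/d^3$ for some $d\in\F_q^*$; equivalently, whenever $\rho$ and $\rho'$ represent the same coset of the cubes in $\F_q^*$.

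The decisive step is then to invoke the hypothesis $q\equiv-1\pmod3$. Here $\gcd(3,q-1)=1$, so by Lemma~\ref{lem6:CubicRoots} the cubing map $x\mapsto x^3$ is a bijection of $\F_q^*$; in particular the equation $d^3=\rho$ has a (unique) solution $d\in\F_q^*$ for every $\rho\in\F_q^*$. Choosing this $d$ forces $\rho/d^3=1$, so the corresponding $\MM^{\infty}$ carries $\Ls_\rho$ onto $\Ls_1$. Hence every line $\Ls_\rho$ lies in the orbit $\Ob_1$ generated by $\Ls_1=\Lc$, which is the assertion. I expect no genuine obstacle: the only points requiring care are the homogeneous rescaling $[\rho,0,0,d^3]=[\rho/d^3,0,0,1]$ and the recognition that the surjectivity of cubing---guaranteed precisely by $q\equiv-1\pmod3$---is what collapses all the distinct cube cosets $\rho\cdot(\F_q^*)^3$ into the single class $\F_q^*$.
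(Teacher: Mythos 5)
Your proof is correct and follows essentially the same route as the paper: both use the diagonal projectivities $\mathrm{diag}(1,d,d^2,d^3)$ of $G_q$ together with Lemma~\ref{lem6:CubicRoots} (cubing is a bijection of $\F_q^*$ when $q\equiv-1\pmod3$). The only cosmetic difference is direction — the paper chooses $d^3=1/\rho$ to send $\Ls_1$ to $\Ls_\rho$, while you choose $d^3=\rho$ to send $\Ls_\rho$ to $\Ls_1$ — which is the same argument via the inverse projectivity.
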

\begin{proof}
Consider the line $\Ls_\rho$, $\rho \in\F_q^*$. By Lemma \ref{lem6:CubicRoots} there exists $d \in\F_q^*$ such that $d^3 = 1/\rho$. Let
\begin{equation*}
   \MM=\left[
 \begin{array}{cccc}
 1&0&0&0\\
 0&d&0&0\\
 0&0&d^2&0\\
 0&0&0&d^3
 \end{array}
  \right]
\end{equation*}
and let  $\Psi$ be the projectivity corresponding to   $\MM$. Then
\begin{equation*}
[0,0,1,0]\times\MM=[0,0,d^2,0];~[1,0,0,1]\times\MM=[1,0,0,d^3]=[1,0,0,1/\rho].
\end{equation*}
As $\Pf(0,0,d^2,0)=\Pf(0,0,1,0)$ and  $\Pf(1,0,0,1/\rho)=\Pf(\rho,0,0,1)$, it means that $\Ls_1 \Psi= \Ls_\rho$.
\end{proof}

\begin{lemma}\label{lem6:stabLrhoqm1}
 Let $q\equiv  -1 \pmod3$, $q$ odd. Then $G_q^{\Ls_\rho}$ has order 2  and the matrix $\MM^{\Ls_\rho}$ corresponding to the non-trivial projectivity of $G_q^{\Ls_\rho}$ has the form \eqref{eq21:M} with
 \begin{equation*}
   a = \sqrt[3]{1/2\rho},   ~b = 1,  ~ c = \sqrt[3]{2/\rho^2},   ~ d = -\sqrt[3]{1/2\rho}.
 \end{equation*}
\end{lemma}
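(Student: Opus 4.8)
The plan is to determine the full stabilizer $G_q^{\Ls_\rho}$ in the case $q\equiv-1\pmod3$, $q$ odd, by refining the analysis of Lemma~\ref{lem6:stabil0inf}. There the hypothesis was that $-2\rho$ is a non-cube; now, for $q\equiv-1\pmod3$, Lemma~\ref{lem6:CubicRoots} tells us that \emph{every} element of $\F_q^*$ is a cube, so $-2\rho$ is automatically a cube and we are in the complementary regime. Consequently the argument of Lemma~\ref{lem6:stabil0inf} that ruled out $K_{\rho,\infty}\Psi=K_{\rho,\gamma}$ no longer applies, and we must revisit the two cases $K_{\rho,\infty}\Psi=K_{\rho,\infty}$ and $K_{\rho,\infty}\Psi=K_{\rho,\gamma}$ with $\gamma\in\F_q$ separately.

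First I would treat the case $K_{\rho,\infty}\Psi=K_{\rho,\infty}$. By Lemma~\ref{lem6:stabKinf} the matrix $\MM$ must have the diagonal form \eqref{eq6:MMinf}, and as in the end of the proof of Lemma~\ref{lem6:stabil0inf} the requirement $K_{\rho,0}\Psi=K_{\rho,\gamma}$ forces $d^3=1$. Since $q\equiv-1\pmod3$, the equation $d^3=1$ has only the solution $d=1$ in $\F_q^*$ (there are no nontrivial cube roots of unity because $3\nmid q-1$). This gives only the identity projectivity. So the nontrivial element of the stabilizer, if it exists, must come from the case $K_{\rho,\infty}\Psi=K_{\rho,\gamma}$ with $\gamma\in\F_q$. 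Here I would plug the general matrix \eqref{eq21:M} into $[0,0,1,0]\times\MM=[\rho,0,\gamma,1]$ exactly as in Lemma~\ref{lem6:stabil0inf}, obtaining the relations $3ab^2=\rho$ (up to scaling), $b^2c+2abd=0$, $ad^2+2bcd=\gamma$, and $3cd^2=1$; setting $b=1$ and using $c+2ad=0$ leads to $d^3=-1/2\rho$, which \emph{now} is solvable since $-1/2\rho$ is a cube. This is what produces the claimed values: from $d=-\sqrt[3]{1/2\rho}$ one recovers $a=\sqrt[3]{1/2\rho}=-d$ and $c=-2ad=2\sqrt[3]{1/2\rho}\cdot\sqrt[3]{1/2\rho}$, which should simplify to $\sqrt[3]{2/\rho^2}$, with $\gamma$ determined accordingly.

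The remaining step is to verify that the single matrix so obtained indeed fixes the whole line $\Ls_\rho$ (not merely permutes the two base points $K_{\rho,0},K_{\rho,\infty}$) and that it has order $2$, so that $|G_q^{\Ls_\rho}|=2$. For the order I would compute $(\MM^{\Ls_\rho})^2$ and check it is scalar, or equivalently note that $\Psi$ swaps $K_{\rho,0}$ and $K_{\rho,\infty}$ and hence is an involution on the line; that $\Psi$ fixes $\Ls_\rho$ setwise follows because its image of each base point lies on $\Ls_\rho$ and a projectivity maps lines to lines. The main obstacle I anticipate is bookkeeping the cube-root normalizations consistently: in $\F_q$ with $q\equiv-1\pmod3$ cube roots are unique (Lemma~\ref{lem6:CubicRoots}), so there is no ambiguity, but one must check that the expressions $a=\sqrt[3]{1/2\rho}$, $c=\sqrt[3]{2/\rho^2}$, $d=-\sqrt[3]{1/2\rho}$ satisfy $ad-bc\ne0$ (so that \eqref{eq21:M} is genuinely a projectivity) and simultaneously all four scalar equations after clearing the common proportionality factor. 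Verifying $ad-bc\ne0$ and the self-consistency of the cube-root identities is the only genuinely delicate part; everything else is a direct substitution.
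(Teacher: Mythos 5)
Your first case ($K_{\rho,\infty}\Psi=K_{\rho,\infty}$, yielding only the identity) matches the paper, as does the derivation of $b=1$, $c=-2ad$, $d^3=-1/2\rho$ from the condition $K_{\rho,\infty}\Psi=K_{\rho,\gamma}$. But there is a genuine gap at the decisive step: your claim that ``from $d=-\sqrt[3]{1/2\rho}$ one recovers $a=\sqrt[3]{1/2\rho}=-d$'' does not follow from anything you have established. The equations coming from the image of $K_{\rho,\infty}$ alone leave $a$ completely free: for \emph{every} $a\in\F_q^*$, the matrix \eqref{eq21:M} with $b=1$, $d=-\sqrt[3]{1/2\rho}$, $c=-2ad$ has $ad-bc=3ad\ne0$ and sends $K_{\rho,\infty}$ to the point $K_{\rho,-\rho d^2}$ of $\Ls_\rho$ (a short computation shows the image does not even depend on $a$). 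What pins down $a$ is a second necessary condition that your proposal never imposes: the image of $K_{\rho,0}$ must also lie on $\Ls_\rho$ (this is necessary and sufficient for stabilizing the line, since a projectivity maps lines to lines and two points span $\Ls_\rho$). This is the heart of the paper's proof: one computes $[\rho,0,0,1]\times\MM=[\rho a^3+1,\,\rho a^2c+d,\,\rho ac^2+d^2,\,\rho c^3-1/2\rho]$, rules out $K_{\rho,0}\Psi=K_{\rho,\infty}$ because it forces $ad-bc=0$, and in the case $K_{\rho,0}\Psi=K_{\rho,\gamma'}$ obtains $2\rho^3a^9+3\rho^2a^6-1=0$, i.e.\ $(t+1)^2(2t-1)=0$ with $t=\rho a^3$; the root $t=-1$ again forces $ad-bc=0$, so only $t=1/2$ survives, and Lemma \ref{lem6:CubicRoots} gives the unique value $a=\sqrt[3]{1/2\rho}$, after which $ad-bc\ne0$ reduces to $9\ne0$ in $\F_q$. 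Without this analysis you cannot conclude that $\#G_q^{\Ls_\rho}=2$: on the basis of your equations alone the stabilizer could a priori contain one element for each admissible $a$, or none besides the identity.

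A secondary error: your proposed shortcut for the order-$2$ claim (``$\Psi$ swaps $K_{\rho,0}$ and $K_{\rho,\infty}$ and hence is an involution'') rests on a false premise, since the paper's analysis shows that $K_{\rho,0}\Psi=K_{\rho,\infty}$ is \emph{impossible}; the non-trivial stabilizing projectivity sends both $K_{\rho,\infty}$ and $K_{\rho,0}$ to affine points $K_{\rho,\gamma}$ of the line. In fact no separate order computation is needed at all: once the stabilizer is shown to consist of exactly two projectivities, it is a group of cardinality $2$, which is what the lemma asserts.
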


\begin{proof}
Let a projectivity $\Psi\in G_q^{\Ls_\rho}$ and let $\MM$ be the matrix corresponding to $\Psi$.
If $K_{\rho,\infty} \Psi  =  K_{\rho,\infty} $, we have $\MM=\MM^{\infty}$, see Lemma \ref{lem6:stabKinf} with \eqref{eq6:MMinf}.  Lemma \ref{lem6:CubicRoots} and the proof of Lemma \ref{eq6:MMLrho} imply that in~\eqref{eq6:MMinf} we have $d=1$, so $\Psi$ is the identity projectivity.

Now we consider the case $K_{\rho,\infty} \Psi  =  K_{\rho,\gamma} $ for some $\gamma\in\F_q$. The general form of the matrix $\MM$ corresponding to $\Psi$ is given by \eqref{eq21:M}. We have:
\begin{equation*}
[0,0,1,0]\times\MM=[3ab^2,b^2c+2abd,ad^2+2bcd,3cd^2]=[\rho,0,\gamma,1].
\end{equation*}
This implies $ab^2/\rho=cd^2$ and $a,b,c,d \neq 0$. As $\MM$ is defined up to a factor of proportionality, we can put $b=1$.  From  $a/\rho=cd^2$ and $c+2ad=0$ we obtain $d^3=-1/2\rho$. By Lemma \ref{lem6:CubicRoots} this equation has the unique solution $d=-\sqrt[3]{1/2\rho}$.

Now we consider  $K_{\rho,0} \Psi$. Taking into account $b=1$, the following holds:
\begin{equation}\label{eq6:inf_x_MM}
[\rho,0,0,1]\times\MM = [\rho a^3+1,\rho a^2c+d,\rho ac^2+d^2,\rho c^3-1/2\rho].
\end{equation}
If $K_{\rho,0} \Psi = K_{\rho,\infty}$, then $\rho a^3+1=0$ and $\rho a^2c+d=0$. Lemma \ref{lem6:CubicRoots} implies $a=-\sqrt[3]{1/\rho}$, so $c = -d\sqrt[3]{1/\rho}$. Then $ad-bc=0$, contradiction.
If $K_{\rho,0} \Psi = K_{\rho,\gamma}$ for some $\gamma\in\F_q$, then $(\rho a^3+1)/\rho=\rho c^3-1/2\rho$ and $\rho a^2c+d=0$ from which we obtain: $2\rho^3a^9+3\rho^2a^6-1=0$. Put $t=\rho a^3$. Then we obtain $2t^3+3t^2-1=(t+1)^2(2t-1)=0$. If $t = -1$, then by
Lemma~\ref{lem6:CubicRoots} $a=-1/\sqrt[3]{\rho}$, so $c = -d/\sqrt[3]{\rho}$ and again $ad-bc=0$, contradiction. If $t=1/2$ then by
Lemma \ref{lem6:CubicRoots} $a=\sqrt[3]{1/2\rho}$, so $c = \sqrt[3]{2/\rho^2}$.
Finally, $ad-bc=0$ implies $-\sqrt[3]{1/4\rho^2}=\sqrt[3]{2/\rho^2}$ whence $-1/4\rho^2=2/\rho^2$ and $9/(4\rho) = 0$, contradiction as $q\not\equiv  0 \pmod3$.
\end{proof}

\begin{lemma}\label{lem6:stabil0infqp1size12}
 Let $q\equiv  1 \pmod3$, $q$ odd and let  $-2\rho$ be a cube in $\F_q$. Then $G_q^{\Ls_\rho}$ has order 12  and is isomorphic to the group $\bold{A}_4$. A matrix $\MM^{\Ls_\rho}$  of $G_q^{\Ls_\rho}$ either has the form:
 \begin{equation}\label{eq3:MM ell_0infty}
   \MM^{\Ls_\rho}=\left[
 \begin{array}{cccc}
 1&0&0&0\\
 0&d&0&0\\
 0&0&d^2&0\\
 0&0&0&d^3
 \end{array}
  \right],~d\in\F_q^*,~ d  \T{ is a cubic root of unity},
\end{equation}
or has the form \eqref{eq21:M} with
\begin{equation*}
  a = \T{a cubic root of } 1/2\rho,   ~ b = 1,  ~ c = -d/\rho a^2,   ~ d = \T{a cubic root of } -1/2\rho.
\end{equation*}
\end{lemma}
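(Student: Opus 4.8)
The plan is to compute $G_q^{\Ls_\rho}$ by the same case analysis on the image of $K_{\rho,\infty}$ employed in the proof of Lemma~\ref{lem6:stabLrhoqm1}, and then to recover the abstract group from its faithful action on the $\Tr$-points of $\Ls_\rho$. Since $q\equiv1\pmod3$ and $-2\rho$ is a cube, Theorem~\ref{th5:Lrho&tang}(iv)(b) gives $\mathfrak{n}_q(\rho)=4$, so by Lemma~\ref{lem5:roots}(i) the line $\Ls_\rho$ carries exactly four $\Tr$-points, one on each tangent $\TT_t$ with $t$ ranging over $\{0\}$ and the three cube roots of $-2\rho$; here $K_{\rho,\infty}$ is the $\Tr$-point on $\TT_0$ by Theorem~\ref{th5:Lrho&tang}(ii).

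First I would split a projectivity $\Psi\in G_q^{\Ls_\rho}$ according to $K_{\rho,\infty}\Psi$. If $K_{\rho,\infty}\Psi=K_{\rho,\infty}$, then Lemma~\ref{lem6:stabKinf} forces $\Psi$ to be the diagonal $\MM^\infty$ of \eqref{eq6:MMinf}; since $[\rho,0,0,1]\times\MM^\infty=[\rho,0,0,d^3]$, invariance of the line needs $d^3=1$, and as $q\equiv1\pmod3$ this yields the three diagonal matrices \eqref{eq3:MM ell_0infty}, one for each cube root of unity. If instead $K_{\rho,\infty}\Psi=K_{\rho,\gamma}$ with $\gamma\in\F_q$, I would reproduce the computation of Lemma~\ref{lem6:stabLrhoqm1}: normalising $b=1$ gives $c=-2ad$ and $d^3=-1/2\rho$ (a cube because $-2\rho$ is), and imposing that $K_{\rho,0}\Psi$ again lie on $\Ls_\rho$ forces $a^3=1/2\rho$, the spurious factor $(t+1)^2$ of $(t+1)^2(2t-1)=0$ with $t=\rho a^3$ being discarded since $t=-1$ makes $ad-bc=0$. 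The key difference from the case $q\equiv-1$ treated in Lemmas~\ref{lem6:CubicRoots} and~\ref{lem6:stabLrhoqm1} is that now $a^3=1/2\rho$ and $d^3=-1/2\rho$ have three solutions each; a short check that the first-to-fourth coordinate ratio of $K_{\rho,0}\Psi$ equals $\rho$ and that $ad-bc=3ad\ne0$ shows that all nine pairs $(a,d)$ are admissible and give the nine matrices of the second stated form. Hence $\#G_q^{\Ls_\rho}=3+9=12$.

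It remains to identify the abstract group as $\mathbf{A}_4$. I would exploit the action of $G_q^{\Ls_\rho}$ on the four $\Tr$-points, equivalently on the four roots $t$ of $t^4+2\rho t=0$. This action is faithful: a $\Psi$ fixing all four $\Tr$-points fixes the four tangents through them (each $\Tr$-point lies on a unique tangent of $\C$), hence the four contact points $P(t)$ on $\C$, and since $G_q\cong\mathrm{PGL}(2,q)$ acts sharply $3$-transitively on $\C$ (Theorem~\ref{th21:Hirs}(i)) such a $\Psi$ is the identity. Thus $G_q^{\Ls_\rho}$ embeds into the symmetric group on these four points as a subgroup of order $12$, and the only order-$12$ subgroup of $S_4$ is the alternating group, so $G_q^{\Ls_\rho}\cong\mathbf{A}_4$. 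As a consistency check, the diagonal subgroup of \eqref{eq3:MM ell_0infty} acts on $\C$ by $t\mapsto t/d$, that is, as a $3$-cycle on the three cube roots of $-2\rho$ fixing the parameter $0$, exactly as the stabiliser of a point inside $\mathbf{A}_4$.

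The step I expect to be the main obstacle is the bookkeeping in the second case: one must verify that the constraints arising from $K_{\rho,\infty}\Psi$ and from $K_{\rho,0}\Psi$ are jointly satisfiable for precisely the nine pairs $(a,d)$—neither fewer (the degenerate root $t=-1$ must be excluded) nor more—so that the total count is exactly $12$. Once this enumeration is secured, the diagonal case and the passage to $\mathbf{A}_4$ via the faithful permutation action on the four $\Tr$-points are straightforward.
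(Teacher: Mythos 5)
Your proposal is correct, and its first half --- the enumeration giving $\#G_q^{\Ls_\rho}=3+9=12$ --- is essentially the paper's own argument: the same case split on $K_{\rho,\infty}\Psi$, the same normalisation $b=1$ leading to $c=-2ad$ and $d^3=-1/2\rho$, the same factorisation $(t+1)^2(2t-1)=0$ with $t=\rho a^3$, and the same exclusion of $t=-1$ via $ad-bc=0$. Where you genuinely diverge is the identification of the abstract group. The paper does this by direct computation: Maple \cite{Maple} is used to check that among the twelve matrices there are exactly three involutions (the non-diagonal ones with $d=-a$) and eight elements of order three, and then group tables \cite{groupbook} are invoked for the fact that the only group of order $12$ with this order profile is $\mathbf{A}_4$. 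You instead let $G_q^{\Ls_\rho}$ act on the four $\Tr$-points of $\Ls_\rho$ (available from Lemma \ref{lem5:roots}(i) and Theorem \ref{th5:Lrho&tang}(iv)(b)), prove the action faithful, and conclude via the embedding into $S_4$, whose unique subgroup of order $12$ is $\mathbf{A}_4$. Your route is computer-free and explains structurally why $\mathbf{A}_4$ appears (it is the natural alternating action on a canonical $4$-set), whereas the paper's route requires no geometric input beyond the matrices themselves. Two facts in your faithfulness argument should be made explicit with references: (a) each $\Tr$-point lies on a \emph{unique} tangent, which holds because for $q\not\equiv 0\pmod 3$ (here the characteristic is $>3$) distinct tangents to $\C$ are pairwise skew, see \cite[Chapter 21]{Hirs_PG3q}; and (b) the action of $G_q$ on $\C$ is \emph{sharply} triply transitive, which follows from triple transitivity (Theorem \ref{th21:Hirs}(i)) together with the fact that $\#G_q=q^3-q$ equals the number of ordered triples of distinct points of $\C$. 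With those two citations in place, your argument is complete and constitutes a clean alternative to the paper's computational identification step.
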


\begin{proof}
Preliminarily we note that as $3 | (q-1)$, by \cite[Section 1.5 (v)]{Hirs_PGFF} the equation $x^3=c$ has 3 or no solutions in F.
Let a projectivity $\Psi\in G_q^{\Ls_\rho}$ and let $\MM$ be the matrix corresponding to $\Psi$.
If $K_{\rho,\infty} \Psi  =  K_{\rho,\infty}$, we have $\MM=\MM^{\infty}$, see the proof of Lemma \ref{lem6:stabil0inf} and \eqref{eq6:MMinf}.

Now we consider the case $K_{\rho,\infty} \Psi  =  K_{\rho,\gamma}$ for some $\gamma\in\F_q$. The general form of the matrix $\MM$ corresponding to $\Psi$ is given by \eqref{eq21:M}. We have:
\begin{equation*}
[0,0,1,0]\times\MM=[3ab^2,b^2c+2abd,ad^2+2bcd,3cd^2]=[\rho,0,\gamma,1].
\end{equation*}
This implies $ab^2/\rho=cd^2$ and $a,b,c,d \neq 0$. As $\MM$ is defined up to a factor of proportionality, we can put $b=1$.  From  $a/\rho=cd^2$ and $c+2ad=0$ we obtain $d^3=-1/2\rho$.

Now consider  $K_{\rho,0} \Psi$. The relation \eqref{eq6:inf_x_MM} holds.
If $K_{\rho,0} \Psi = K_{\rho,\infty}$, we have $\rho a^3+1=0$ and $\rho a^2c+d=0$. Then $ad-bc=ad-(-d/\rho a^2)=(\rho a^3d+d)/\rho a^2= (-d+d)/\rho a^2=0$, contradiction.
If $K_{\rho,0} \Psi = K_{\rho,\gamma}$ for some $\gamma\in\F_q$, then $(\rho a^3+1)/\rho=\rho c^3-1/2\rho$ and $\rho a^2c+d=0$ from which we obtain: $c = -d/\rho a^2$, $2\rho^3a^9+3\rho^2a^6-1=0$. Put $t=\rho a^3$. Then we obtain $2t^3+3t^2-1=(t+1)^2(2t-1)=0$. If $t = -1$, then  $a^3=-1/\rho$ and again $ad-bc=0$, contradiction. If $t=1/2$ then $a^3=1/2\rho$. By hypothesis, $1/2\rho$ is a cube because it is the product of the two cubes $-1$ and $-1/2\rho$. Finally, $ad-bc=(\rho a^3d+d)/\rho a^2= (d/2+d)/\rho a^2= 3d/2\rho a^2 \neq 0$ if  $q \not\equiv  0 \pmod3$.

By direct computation using Maple${}^\mathrm{TM}$ \cite{Maple}, a non trivial matrix of the form  \eqref{eq3:MM ell_0infty} has order three, whereas of the other nine matrices, the three matrices having $d=-a$ have order two and the other six have order three.
The only group of order $12$ having  three elements of order two and eight elements of order three is $\bold{A}_4$, see \cite{groupbook}.
\end{proof}

\begin{theorem}\label{th6:main}
\begin{description}
  \item[(i)]
Let $q\equiv  1 \pmod3$. Let $q$ be even or let $-2\rho$ be a non-cube in $\F_q$.  Then the size of the subgroup $G_q^{\Ls_\rho}$  of $G_q$ fixing the $\EnG$-line $\Ls_\rho$ is $\#G_q^{\Ls_\rho}=3$. The size of the orbit of $\Ls_\rho$ under $G_q$ is equal to $(q^3-q)/3.$

  \item[(ii)]
Let $q\equiv  1 \pmod3$. Let $q$ be odd and let $-2\rho$ be a cube in $\F_q$.  Then the size of the subgroup $G_q^{\Ls_\rho}$  of $G_q$ fixing the $\EnG$-line $\Ls_\rho$ is $\#G_q^{\Ls_\rho}=12$ and $G_q^{\Ls_\rho}\cong \bold{A}_4$. The size of the orbit of $\Ls_\rho$ under $G_q$ is equal to $(q^3-q)/12.$

  \item[(iii)]
Let $q\equiv  -1 \pmod3$. Let $q$ be even. Then $\#G_q^{\Ls_\rho}=1$ and the size of the orbit of $\Ls_\rho$ under $G_q$ is equal to $q^3-q.$
  \item[(iv)]
Let $q\equiv  -1 \pmod3$. Let $q$ be odd. Then $\#G_q^{\Ls_\rho}=2$ and the size of the orbit of $\Ls_\rho$ under $G_q$ is equal to $(q^3-q)/2.$
\end{description}
\end{theorem}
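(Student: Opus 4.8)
The plan is to prove Theorem~\ref{th6:main} by combining the stabilizer computations of Lemmas~\ref{lem6:stabil0inf}, \ref{lem6:stabLrhoqm1}, and~\ref{lem6:stabil0infqp1size12} with the orbit-stabilizer theorem applied inside the group $G_q\cong\mathrm{PGL}(2,q)$, whose order is $\#G_q=(q+1)q(q-1)=q^3-q$ for $q\ge5$. Since the orbit $\Ob_\rho$ of $\Ls_\rho$ under $G_q$ satisfies $\#\Ob_\rho=\#G_q/\#G_q^{\Ls_\rho}$, the entire theorem reduces to reading off $\#G_q^{\Ls_\rho}$ in each of the four cases and dividing $q^3-q$ by it. Thus the four orbit-size assertions are immediate corollaries of the four stabilizer-size assertions, and the real content is pinning down $\#G_q^{\Ls_\rho}$.

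For the stabilizer sizes I would argue case by case. In case~(i) ($q\equiv1\pmod3$, and $q$ even or $-2\rho$ a non-cube), Lemma~\ref{lem6:stabil0inf} shows $\MM^{\Ls_\rho}$ must have the diagonal form \eqref{eq6:MMLrho} with $d$ a cube root of unity; since $q\equiv1\pmod3$ the field $\F_q$ contains exactly three cube roots of unity, giving $\#G_q^{\Ls_\rho}=3$ and orbit size $(q^3-q)/3$. In case~(iii) ($q\equiv-1\pmod3$, $q$ even), the same Lemma~\ref{lem6:stabil0inf} applies because for even $q$ its hypothesis holds unconditionally, but now $\F_q$ has a \emph{unique} cube root of unity (namely $d=1$), so only the identity survives, $\#G_q^{\Ls_\rho}=1$, and the orbit has full size $q^3-q$. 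In case~(iv) ($q\equiv-1\pmod3$, $q$ odd), Lemma~\ref{lem6:stabLrhoqm1} directly states $\#G_q^{\Ls_\rho}=2$, whence orbit size $(q^3-q)/2$. In case~(ii) ($q\equiv1\pmod3$, $q$ odd, $-2\rho$ a cube), Lemma~\ref{lem6:stabil0infqp1size12} gives $\#G_q^{\Ls_\rho}=12$ and $G_q^{\Ls_\rho}\cong\mathbf{A}_4$, yielding orbit size $(q^3-q)/12$.

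The only subtlety worth flagging is the bookkeeping of how many cube roots of unity $\F_q$ contains, which is what distinguishes case~(i) from case~(iii) even though both invoke Lemma~\ref{lem6:stabil0infqp1size12}'s companion Lemma~\ref{lem6:stabil0inf}: when $q\equiv1\pmod3$ one has $3\mid(q-1)$ so $x^3=1$ has three solutions, while for $q\equiv-1\pmod3$ one has $\gcd(3,q-1)=1$ so $x^3=1$ has the single solution $x=1$ (this is exactly the content cited from \cite[Section~1.5(iv)(v)]{Hirs_PGFF} and used in Lemma~\ref{lem6:CubicRoots}). I expect \textbf{no genuine obstacle} here: all the heavy lifting — identifying the explicit form of the stabilizer matrices and ruling out spurious projectivities sending $K_{\rho,0}$ or $K_{\rho,\infty}$ to other points of the line — has already been carried out in the preceding lemmas. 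The proof of the theorem is therefore essentially an assembly step: invoke the appropriate lemma in each case to get $\#G_q^{\Ls_\rho}$, then apply orbit-stabilizer with $\#G_q=q^3-q$.
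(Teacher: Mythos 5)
Your proposal is correct and follows essentially the same route as the paper's own proof: each case reads off $\#G_q^{\Ls_\rho}$ from Lemmas~\ref{lem6:stabil0inf}, \ref{lem6:stabLrhoqm1}, and~\ref{lem6:stabil0infqp1size12} (with the cube-roots-of-unity count separating cases~(i) and~(iii)), and then applies the orbit-stabilizer relation $\#\Ob_\rho=\#G_q/\#G_q^{\Ls_\rho}$ with $\#G_q=q^3-q$, which the paper cites as \cite[Lemma 2.44(ii)]{Hirs_PGFF}.
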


\begin{proof}
\begin{description}
  \item[(i)]
We take the matrix $\MM^{\Ls_\rho}$.  By  \cite[Section 1.5 (iii)]{Hirs_PGFF}, the equation $d^3=1$ has $3$ solutions if  $q\equiv  1 \pmod3$.
By \cite[Lemma 2.44(ii)]{Hirs_PGFF}, the size of the orbit of $\Ls_\rho$ under $G_q$ is $\#G_q/\#G_q^{\Ls_\rho}=(q^3-q)/3$.
  \item[(ii)]
We apply  Lemma \ref{lem6:stabil0infqp1size12} and \cite[Lemma 2.44(ii)]{Hirs_PGFF}.
  \item[(iii)]
By  \cite[Section 1.5 (ii)]{Hirs_PGFF}, the equation $d^3=1$ has a unique solution if  $q\equiv  -1 \pmod3$. So,  $\#G_q^{\Ls_\rho}=1$ and $\#G_q/\#G_q^{\Ls_\rho}=q^3-q.$
  \item[(iv)]
We apply Lemma \ref{lem6:stabLrhoqm1} and \cite[Lemma 2.44(ii)]{Hirs_PGFF}. \qedhere
\end{description}
\end{proof}

\section{A cubic equation and incidence matrices, even $\boldsymbol{q}$}\label{sec7:cubEqEven}

We consider the cubic equation regarding $t$:
\begin{align}
& \widetilde{F}_{\rho,\gamma}(t)=t^3+\gamma t^2+\rho=0,~\gamma\in\F_q,~\rho\in\F_q^*, ~q\T{ is even}.\label{eq71:cubeqrho}
\end{align}
For $\widetilde{F}_{\rho,\gamma}(t)$, the discriminant $\widetilde{\Delta}$ and the invariant $\widetilde{\delta}$, defined in \cite[Section 1.8, equation (1.15), Lemma 1.18]{Hirs_PGFF}, are as follows:
\begin{align}
 &\widetilde{\Delta}=\rho^2\ne0,~\widetilde{\delta}=\frac{\gamma^3}{\rho}+1.\label{eq71:discrimtildeeven}
   \end{align}
Let $q$ be even. Let $\mathrm{Tr}_2(\eta)$ be the absolute trace of an element $\eta\in\F_q$. For the fixed $\rho\in\F_q^*$, we denote
 \begin{align}\
&\widetilde{\Wb}_q(\rho)\triangleq\#\left\{\gamma\;|\;\mathrm{Tr}_2\left(\frac{\gamma^3}{\rho}+1\right)=1,~\gamma\in\F_q,~
q=2^c\right\}.\label{eq71:Wtilde}
   \end{align}
   We denote $\widetilde{\Nb}_m(\rho)$  the number of $\gamma$ such that the equation $\widetilde{F}_{\rho,\gamma}(t)$ has exactly $m$ distinct solutions $t$ in $\F_q,~m=0,1,2,3$. As $\gamma\in\F_q$, we have
   \begin{equation}\label{eq71:sum of roots}
    \widetilde{\Nb}_0(\rho)+\widetilde{\Nb}_1(\rho)+\widetilde{\Nb}_2(\rho)+\widetilde{\Nb}_3(\rho)=q.
   \end{equation}

\begin{lemma}\label{lem71:N1&N2}
  Let $q$ be even. Let $\widetilde{\delta}$ be as in \eqref{eq71:discrimtildeeven}. Let $\widetilde{\Wb}_q(\rho)$ be as in \eqref{eq71:Wtilde}. For the equation $\widetilde{F}_{\rho,\gamma}(t)$ \eqref{eq71:cubeqrho}, the following holds:
  \begin{description}
    \item[(i)] $\widetilde{F}_{\rho,\gamma}(t)$ has exactly one root in $\F_q$ if and only if the absolute trace   $\mathrm{Tr}_2(\widetilde{\delta})=1$.
In other words,
\begin{equation}\label{eq71:N1(mu)}
 \widetilde{\Nb}_1(\rho)=\widetilde{\Wb}_q(\rho).
\end{equation}

    \item[(ii)] For all admissible $\gamma,\rho$, it is not possible that the equation $\widetilde{F}_{\rho,\gamma}(t)$ has exactly two roots  in $\F_q$, i.e.
    \begin{align}\label{eq71:roots}
    &  \widetilde{\Nb}_2(\rho)=0,~    \widetilde{\Nb}_0(\rho)+\widetilde{\Nb}_1(\rho)+\widetilde{\Nb}_3(\rho)=q.
    \end{align}
  \end{description}
\end{lemma}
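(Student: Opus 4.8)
The plan is to base everything on the separability of $\widetilde{F}_{\rho,\gamma}(t)$ and on the even-$q$ case of the classification of cubics in \cite[Section 1.8, Lemma 1.18]{Hirs_PGFF}. First I would record that $\widetilde{F}_{\rho,\gamma}$ is separable: in characteristic $2$ its derivative is $\widetilde{F}_{\rho,\gamma}'(t)=3t^2+2\gamma t=t^2$, and since $\widetilde{F}_{\rho,\gamma}(0)=\rho\ne0$ the only candidate for a repeated root, $t=0$, is not a root. Hence $\gcd(\widetilde{F}_{\rho,\gamma},\widetilde{F}_{\rho,\gamma}')=1$ and $\widetilde{F}_{\rho,\gamma}$ has no multiple roots; equivalently this is the content of $\widetilde{\Delta}=\rho^2\ne0$ in \eqref{eq71:discrimtildeeven}, which is exactly the hypothesis needed to invoke \cite[Lemma 1.18]{Hirs_PGFF}.

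For part (ii) I would argue that a monic cubic over the field $\F_q$ can never have exactly two distinct roots in $\F_q$. Indeed, if $r_1,r_2\in\F_q$ are two distinct roots of $\widetilde{F}_{\rho,\gamma}$, then dividing by $(t-r_1)(t-r_2)$ leaves a monic linear factor $t-r_3$ with $r_3\in\F_q$, so the third root lies in $\F_q$ as well; together with separability (which forbids $r_3\in\{r_1,r_2\}$) this forces three distinct roots. Thus for every admissible pair $\gamma,\rho$ the number of roots lies in $\{0,1,3\}$, so no value of $\gamma$ contributes to the count $\widetilde{\Nb}_2(\rho)$, i.e. $\widetilde{\Nb}_2(\rho)=0$; the remaining identity in \eqref{eq71:roots} is then immediate from \eqref{eq71:sum of roots}. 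The same dichotomy is also delivered directly by the even-$q$ part of \cite[Lemma 1.18]{Hirs_PGFF}, where the case $\mathrm{Tr}_2(\widetilde{\delta})=0$ yields either three roots or none.

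For part (i) I would apply the even-$q$ statement of \cite[Lemma 1.18]{Hirs_PGFF} to $\widetilde{F}_{\rho,\gamma}$: since $\widetilde{\Delta}=\rho^2\ne0$, the cubic has exactly one root in $\F_q$ precisely when $\mathrm{Tr}_2(\widetilde{\delta})=1$, where $\widetilde{\delta}=\gamma^3/\rho+1$ is the invariant recorded in \eqref{eq71:discrimtildeeven}. This is exactly the asserted equivalence in (i). Summing over $\gamma\in\F_q$ and comparing with the definition \eqref{eq71:Wtilde} of $\widetilde{\Wb}_q(\rho)$ gives $\widetilde{\Nb}_1(\rho)=\#\{\gamma\in\F_q\mid \mathrm{Tr}_2(\gamma^3/\rho+1)=1\}=\widetilde{\Wb}_q(\rho)$, which is \eqref{eq71:N1(mu)}.

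The only genuine obstacle is the trace criterion itself, i.e. the even-$q$ half of \cite[Lemma 1.18]{Hirs_PGFF}. If one wanted a self-contained derivation rather than a citation, I would depress the cubic by $t\mapsto t+\gamma$ to reach $s^3+\gamma^2 s+\rho$ and then, for $\gamma\ne0$, normalize by $s\mapsto\gamma u$ to the trinomial $u^3+u+\rho/\gamma^3$, treating $\gamma=0$ (where $\widetilde{\delta}=1$ and $t^3=\rho$ has one root or else none/three according to the parity encoded in $\mathrm{Tr}_2(1)$) separately; the unique-root criterion for $u^3+u+c$ in terms of $\mathrm{Tr}_2(1/c+1)$ is the standard additive-character computation, and this is the one step I expect to be technical rather than routine. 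Since \cite{Hirs_PGFF} is available, I would simply cite it.
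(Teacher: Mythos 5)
Your proposal is correct, and for part (i) it follows essentially the paper's route: both arguments reduce to the trace criterion for cubics over fields of even order in Hirschfeld's book (the paper cites \cite[Corollary 1.15(ii)]{Hirs_PGFF}, you cite the even-$q$ case of \cite[Lemma 1.18]{Hirs_PGFF}; the content invoked --- exactly one root in $\F_q$ iff $\mathrm{Tr}_2(\widetilde{\delta})=1$, given $\widetilde{\Delta}\ne0$ --- is the same), after which \eqref{eq71:N1(mu)} follows by counting $\gamma$ against the definition \eqref{eq71:Wtilde}. Where you genuinely depart from the paper is part (ii): the paper simply cites \cite[Theorem 1.34]{Hirs_PGFF} (a cubic with nonzero discriminant cannot have exactly two roots in $\F_q$) and then invokes \eqref{eq71:sum of roots}, whereas you prove the needed fact from scratch --- in characteristic $2$ the derivative of $\widetilde{F}_{\rho,\gamma}$ is $t^2$ while $\widetilde{F}_{\rho,\gamma}(0)=\rho\ne0$, so the cubic is separable, and a separable cubic with two roots in $\F_q$ splits over $\F_q$ into three distinct linear factors. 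Your version buys self-containedness (the explicit separability computation also substantiates the hypothesis $\widetilde{\Delta}=\rho^2\ne0$ rather than just quoting it), at the cost of some length; the paper buys brevity by outsourcing both halves to the book. One caution: your opening assertion in part (ii), that a monic cubic over $\F_q$ ``can never have exactly two distinct roots in $\F_q$'', is false as a general statement (e.g. $(t-1)^2(t-2)$ has exactly two distinct roots); your actual argument does use separability to exclude $r_3\in\{r_1,r_2\}$, so it is sound for $\widetilde{F}_{\rho,\gamma}$, but the claim should be stated for separable (squarefree) cubics only.
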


\begin{proof}
\begin{description}
    \item[(i)]
  We use \eqref{eq71:discrimtildeeven} and \cite[Corollary 1.15(ii)]{Hirs_PGFF}.
    \item[(ii)] By  \eqref{eq71:discrimtildeeven}, $\widetilde{\Delta}\ne0$. In this case, by \cite[Theorem 1.34]{Hirs_PGFF}, the corresponding cubic equation cannot have exactly two roots in $\F_q$. Finally, we use \eqref{eq71:sum of roots}. \qedhere
  \end{description}
\end{proof}


\begin{lemma}\label{lem71:q2m-1}
 Let $q=2^{2m-1}\equiv-1\pmod3$, $m\ge2$. We have
  \begin{equation}\label{eq71:q=2 2m-1}
   \widetilde{\Wb}_{2^{2m-1}}(\rho)=\frac{q}{2}=2^{2m-2},~\forall\rho\in\F_q^*.
  \end{equation}
\end{lemma}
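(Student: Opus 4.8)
The plan is to use the $\F_2$-linearity of the absolute trace to turn the counting problem into counting the kernel of a nonzero linear functional, and to exploit the two numerical features of the hypothesis $q=2^{2m-1}\equiv-1\pmod3$ separately. First, since $\mathrm{Tr}_2$ is additive over $\F_q$, I would split
\begin{equation*}
\mathrm{Tr}_2\!\left(\frac{\gamma^3}{\rho}+1\right)=\mathrm{Tr}_2\!\left(\frac{\gamma^3}{\rho}\right)+\mathrm{Tr}_2(1).
\end{equation*}
The key observation is that $\mathrm{Tr}_2(1)=\sum_{i=0}^{c-1}1=c\bmod 2$ with $c=2m-1$ odd, hence $\mathrm{Tr}_2(1)=1$. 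This is precisely where the \emph{odd} exponent $2m-1$ enters. Consequently the defining condition $\mathrm{Tr}_2(\gamma^3/\rho+1)=1$ is equivalent to $\mathrm{Tr}_2(\gamma^3/\rho)=0$, so that $\widetilde{\Wb}_q(\rho)=\#\{\gamma\in\F_q\mid \mathrm{Tr}_2(\gamma^3/\rho)=0\}$.

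Next I would use the congruence $q\equiv-1\pmod3$, which gives $q-1\equiv1\pmod3$, hence $\gcd(3,q-1)=1$. By the standard fact on cube roots in finite fields (see \cite[Section 1.5(ii)]{Hirs_PGFF}, already invoked in the excerpt via Lemma \ref{lem6:CubicRoots}), the cubing map $\gamma\mapsto\gamma^3$ is then a bijection of $\F_q^*$, and it fixes $0$, so it is a bijection of $\F_q$. Composing it with the bijection $\delta\mapsto\delta/\rho$ (multiplication by the nonzero constant $1/\rho$), as $\gamma$ runs over $\F_q$ the value $\gamma^3/\rho$ runs over all of $\F_q$ exactly once. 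Therefore the count is unchanged by the substitution $\varepsilon=\gamma^3/\rho$, giving
\begin{equation*}
\widetilde{\Wb}_q(\rho)=\#\{\varepsilon\in\F_q\mid \mathrm{Tr}_2(\varepsilon)=0\}.
\end{equation*}
In particular this count no longer depends on $\rho$, which already yields the claimed uniformity over $\rho\in\F_q^*$.

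Finally, $\mathrm{Tr}_2\colon\F_q\to\F_2$ is a surjective $\F_2$-linear functional on the $(2m-1)$-dimensional $\F_2$-vector space $\F_q$, so its kernel is a hyperplane of dimension $2m-2$, of cardinality $2^{2m-2}=q/2$. This establishes $\widetilde{\Wb}_{2^{2m-1}}(\rho)=q/2=2^{2m-2}$ for every $\rho\in\F_q^*$, as required. I do not expect any genuine obstacle here: the argument is a chain of elementary reductions. The only two points that must be pinned down carefully are that $\mathrm{Tr}_2(1)=1$ holds \emph{because} the degree $2m-1$ is odd (for an even power of $2$ this would fail and change the answer), and that $\gcd(3,q-1)=1$ makes cubing a bijection (the role of $q\equiv-1\pmod3$); both are recorded in \cite{Hirs_PGFF}.
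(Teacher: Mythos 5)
Your proof is correct and follows essentially the same route as the paper's: both arguments rest on the two facts that $q\equiv-1\pmod 3$ makes the cube map a bijection of $\F_q$ (so $\gamma^3/\rho$ sweeps out the whole field) and that the absolute trace $\mathrm{Tr}_2$ takes each value on exactly half of $\F_q$. The only cosmetic difference is that the paper absorbs the $+1$ into the bijection $\gamma\mapsto\gamma^3/\rho+1$ and counts trace-one elements directly, whereas you strip it off by additivity using $\mathrm{Tr}_2(1)=1$ and count the kernel instead; these are equivalent, since for $q=2^c$ the condition $q\equiv-1\pmod 3$ is precisely the condition that $c$ is odd.
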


\begin{proof}
  For a fixed $\rho$, when $\gamma$ runs over $\F_q$ with $q=2^{2m-1}\equiv-1\pmod3$, the values of  $\gamma^3/\rho+1$, also run over $\F_q$. Half of the field elements have absolute trace one.
\end{proof}

\begin{lemma}\label{lem71:q2m}
 Let $q=2^{2m}\equiv1\pmod3$, $m\ge2$. We have
  \begin{equation}\label{eq71:q=2 2m}
   \widetilde{\Wb}_{2^{2m}}(\rho)=\left\{\begin{array}{lcl@{}}
                                    2^{2m-1}+(-2)^m =\frac{1}{2}q+(-1)^m\sqrt{q}& \T{if} &\rho\T{ is a cube in }\F_q \\
                                   2^{2m-1}+(-2)^{m-1}=\frac{1}{2}q-(-1)^m\cdot\frac{1}{2}\sqrt{q} & \T{if}&\rho\T{ is a non-cube in }\F_q
                                  \end{array}
   \right..
  \end{equation}
\end{lemma}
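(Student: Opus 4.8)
The plan is to rewrite $\widetilde{\Wb}_{2^{2m}}(\rho)$ as an additive character sum and then evaluate it through a cubic Gauss sum. First, since $q=2^{2m}$ with $2m$ even, the absolute trace of $1$ vanishes, $\mathrm{Tr}_2(1)=0$, so by $\F_2$-additivity of $\mathrm{Tr}_2$ we have $\mathrm{Tr}_2(\gamma^3/\rho+1)=\mathrm{Tr}_2(\gamma^3/\rho)$ and hence, by \eqref{eq71:Wtilde}, $\widetilde{\Wb}_{2^{2m}}(\rho)=\#\{\gamma\in\F_q\mid \mathrm{Tr}_2(\gamma^3/\rho)=1\}$. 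Writing $\psi(x)=(-1)^{\mathrm{Tr}_2(x)}$ for the canonical additive character of $\F_q$, the indicator of $\mathrm{Tr}_2=1$ is $\tfrac12(1-\psi(\cdot))$, so
\[
\widetilde{\Wb}_{2^{2m}}(\rho)=\tfrac12\bigl(q-S(\rho)\bigr),\qquad S(\rho):=\sum_{\gamma\in\F_q}\psi(\gamma^3/\rho).
\]
Thus the whole lemma reduces to evaluating the cubic exponential sum $S(\rho)$.

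Next I would express $S(\rho)$ via the cubic character. Since $q\equiv1\pmod3$, there is a multiplicative character $\chi$ of $\F_q^*$ of order $3$, and for $\beta\neq0$ the number of cube roots of $\beta$ equals $1+\chi(\beta)+\chi^2(\beta)$. Splitting off $\gamma=0$ and using the standard identities $\sum_{\beta\in\F_q^*}\psi(c\beta)=-1$ and $\sum_{\beta}\chi^j(\beta)\psi(c\beta)=\overline{\chi^j(c)}\,G(\chi^j)$ with $c=1/\rho$, the trivial contributions cancel and one obtains the clean expression
\[
S(\rho)=\chi(\rho)\,G(\chi)+\overline{\chi(\rho)}\,G(\overline{\chi}),
\]
where $G(\chi)=\sum_{x\in\F_q^*}\chi(x)\psi(x)$ is the cubic Gauss sum. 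At this point the answer is governed entirely by $G(\chi)$ and by whether $\chi(\rho)=1$ (i.e.\ $\rho$ a cube) or $\chi(\rho)$ is a primitive cube root of unity (i.e.\ $\rho$ a non-cube), in which latter case $\chi(\rho)+\overline{\chi(\rho)}=-1$.

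The main obstacle, and the core of the argument, is the evaluation of $G(\chi)$ over $\F_{2^{2m}}$; this is the \emph{semi-primitive} case, since $2\equiv-1\pmod3$. I would compute it by Davenport--Hasse lifting from the base field $\F_4$. Writing $\F_4^*=\{1,w,w^2\}$ with $w^2+w+1=0$, one has $\mathrm{Tr}_2(1)=0$ and $\mathrm{Tr}_2(w)=\mathrm{Tr}_2(w^2)=1$, so for a cubic character $\chi_0$ of $\F_4^*$ the Gauss sum is $G(\chi_0)=1-\chi_0(w)-\chi_0(w^2)=1-(-1)=2$, because $\chi_0(w)$ and $\chi_0(w^2)$ are the two primitive complex cube roots of unity. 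Since the cubic character $\chi$ of $\F_{2^{2m}}^*$ equals $\chi_0\circ N$ for the norm $N\colon\F_{2^{2m}}^*\to\F_4^*$, the Davenport--Hasse relation gives $G(\chi)=(-1)^{m-1}G(\chi_0)^m=(-1)^{m-1}2^m$. This value is real, whence $G(\overline{\chi})=\overline{G(\chi)}=(-1)^{m-1}2^m$ as well (note $\chi(-1)=1$ since $-1=1$ in characteristic $2$). Substituting into $S(\rho)$ yields $S(\rho)=(-1)^{m-1}2^{m+1}$ when $\rho$ is a cube and $S(\rho)=(-1)^m2^m$ when $\rho$ is a non-cube; plugging these into $\widetilde{\Wb}_{2^{2m}}(\rho)=\tfrac12(q-S(\rho))$ and simplifying with $q=2^{2m}$ and $\sqrt q=2^m$ reproduces exactly the two values $2^{2m-1}+(-2)^m$ and $2^{2m-1}+(-2)^{m-1}$ claimed in \eqref{eq71:q=2 2m}.
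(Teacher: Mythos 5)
Your proof is correct, and its reduction step is exactly the paper's: the paper likewise uses $\mathrm{Tr}_2(1)=0$ to drop the $+1$, introduces the trace-zero count, and arrives at $\widetilde{\Wb}_{2^{2m}}(\rho)=2^{2m-1}-\tfrac12 S(1/\rho,0)$ with $S(a,0)=\sum_{x\in\F_q}(-1)^{\mathrm{Tr}_2(ax^3)}$, which is precisely your identity $\widetilde{\Wb}_{2^{2m}}(\rho)=\tfrac12\bigl(q-S(\rho)\bigr)$. Where you genuinely diverge is in the evaluation of the cubic exponential sum: the paper at this point simply cites Carlitz's explicit formula for $q=2^{2m}$ (namely $(-1)^{m+1}2^{m+1}$ when $a$ is a cube and $(-1)^m2^m$ when it is not) and finishes by observing that $\rho$ and $1/\rho$ are cubes simultaneously, whereas you re-derive that evaluation from scratch: you decompose $S(\rho)$ into cubic Gauss sums via $\#\{\gamma : \gamma^3=\beta\}=1+\chi(\beta)+\chi^2(\beta)$, compute $G(\chi_0)=2$ directly over $\F_4$ using $\mathrm{Tr}_2(w)=\mathrm{Tr}_2(w^2)=1$, and lift by Davenport--Hasse in the semi-primitive case $2\equiv-1\pmod 3$ to get $G(\chi)=(-1)^{m-1}2^m$, with $G(\overline{\chi})=G(\chi)$ since $\chi(-1)=1$ in characteristic $2$. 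All of these steps are sound, and your final values of $S(\rho)$ reproduce Carlitz's formula exactly, so the two proofs necessarily agree. What the paper's route buys is brevity, at the price of resting on an external reference; what yours buys is a self-contained argument whose only black box is the standard Davenport--Hasse relation, and which makes transparent \emph{why} the answer depends only on whether $\chi(\rho)=1$ or $\chi(\rho)+\overline{\chi(\rho)}=-1$. Either treatment is acceptable as a proof of the lemma.
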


\begin{proof}
 In  \cite{Carlitz}, for a field $\F_{q}$, $q=p^n$, $p$ prime, an exponential sum
 \begin{equation*}
   S(a,0)\triangleq\sum_{x\in\F_q}exp\left(\frac{2\pi i}{p}\mathrm{Tr}_p(ax^3)\right), a\in \F_q\T{ is a constant},
 \end{equation*}
   is considered, where $\mathrm{Tr}_p(ax^3)$ is the absolute trace of $ax^3$. For $q=2^n$, in the literature, see e.g. \cite{FanWangXu,ZinHellesCharp} and the references therein,
   this sum is presented in the form $\sum_{x\in\F_q}(-1)^{\mathrm{Tr}_2(ax^3)}$. For $q=2^{2m}$, in \cite{Carlitz}, it is proved:
   \begin{equation}\label{eq71:S(a,0)}
     S(a,0)=\sum_{x\in\F_q}(-1)^{\mathrm{Tr}_2(ax^3)}=\left\{\begin{array}{lcl@{}}
                                    (-1)^{m+1}2^{m+1} & \T{if} &a\T{ is a cube in }\F_q \\
                                   (-1)^{m}2^{m} & \T{if}&a\T{ is a non-cube in }\F_q
                                  \end{array}
   \right..
   \end{equation}

 For $q=2^{2m}$, we have $\mathrm{Tr}_2(1) =  0$ that implies
 \begin{align*}
&\widetilde{\Wb}_{2^{2m}}(\rho)=\#\left\{\gamma\;|\;\mathrm{Tr}_2\left(\frac{\gamma^3}{\rho}\right)=1,~\gamma\in\F_q,~
q=2^{2m}\right\}.
   \end{align*}
   We denote
 \begin{align*}
&\widetilde{\widetilde{\Wb}}_{2^{2m}}(\rho)\triangleq\#\left\{\gamma\;|\;\mathrm{Tr}_2\left(\frac{\gamma^3}{\rho}\right)=0,~\gamma\in\F_q,~
q=2^{2m}\right\}.
   \end{align*}
   Obviously, $\widetilde{\widetilde{\Wb}}_{2^{2m}}(\rho)+\widetilde{\Wb}_{2^{2m}}(\rho)=2^{2m}$ and $\widetilde{\widetilde{\Wb}}_{2^{2m}}(\rho)-\widetilde{\Wb}_{2^{2m}}(\rho)=S(1/\rho,0)$, that gives
   \begin{equation}\label{eq71:widetilde Wb}
     \widetilde{\Wb}_{2^{2m}}(\rho)=2^{2m-1}-\frac{1}{2}S\left(\frac{1}{\rho},0\right).
   \end{equation}
  If $\rho$ is a cube (resp. a non-cube) in $\F_q$ then $1/\rho$ also is a cube (resp. a non-cube). Therefore the assertion \eqref{eq71:q=2 2m} follows from \eqref{eq71:S(a,0)}, \eqref{eq71:widetilde Wb}.
\end{proof}

\begin{remark}
 The 1-st row of \eqref{eq71:q=2 2m} follows from the context of \cite[Section 4]{CerPavDM}. This is noted in \cite[equation (4.3)] {DMP_IncO6JG}. In \cite{CerPavDM} the results of \cite{WolfNumbSolut} are used.
\end{remark}

\begin{lemma}\label{lem72:cubic_eqrho}
Let $q$ be even. Let $\gamma,t\in\F_q$. Let the point $K_{\rho,\gamma}=\Pf(\rho,0,\gamma,1)$ belong to the osculating plane $\pi_\T{osc}(t)$. Then the values of $\rho,\gamma,$ and $t$ satisfy the cubic equation $\widetilde{F}_{\rho,\gamma}(t)$ of \eqref{eq71:cubeqrho}.
\end{lemma}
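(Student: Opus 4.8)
The plan is to verify directly that the condition ``$K_{\rho,\gamma}\in\pi_\T{osc}(t)$'' translates into the stated cubic equation by substituting the homogeneous coordinates of the point into the defining equation of the osculating plane. First I would recall from \eqref{eq21:osc_plane} that for $t\in\F_q$ the osculating plane is $\pi_\T{osc}(t)=\boldsymbol{\pi}(1,-3t,3t^2,-t^3)$, so a point $\Pf(x_0,x_1,x_2,x_3)$ lies on it exactly when
\begin{equation*}
  x_0-3tx_1+3t^2x_2-t^3x_3=0.
\end{equation*}
Then I would substitute the coordinates of $K_{\rho,\gamma}=\Pf(\rho,0,\gamma,1)$ from \eqref{eq3:K}, giving $x_0=\rho$, $x_1=0$, $x_2=\gamma$, $x_3=1$.

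The substitution collapses the middle term (since $x_1=0$) and yields $\rho+3t^2\gamma-t^3=0$. The only remaining step is to reconcile this with the claimed equation $\widetilde{F}_{\rho,\gamma}(t)=t^3+\gamma t^2+\rho=0$ from \eqref{eq71:cubeqrho}. Here I would invoke the hypothesis that $q$ is even: in characteristic $2$ we have $3\equiv1$, so $3t^2\gamma=\gamma t^2$, and the sign on $-t^3$ is irrelevant since $-1=1$. Hence $\rho+3t^2\gamma-t^3=t^3+\gamma t^2+\rho$, and the plane-incidence condition is precisely $\widetilde{F}_{\rho,\gamma}(t)=0$.

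I should also address the case $t=\infty$ raised implicitly by allowing the point to lie on \emph{any} osculating plane; but the statement restricts to $t\in\F_q$ (it speaks of the cubic equation $\widetilde{F}_{\rho,\gamma}(t)$ in the variable $t\in\F_q$), so $\pi_\T{osc}(\infty)=\boldsymbol{\pi}(0,0,0,1)$ need not be considered here, and in any case $K_{\rho,\gamma}$ has $x_3=1\ne0$, so it never lies on $\pi_\T{osc}(\infty)$. This is consistent with the finite-$t$ formulation.

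The proof is essentially a one-line computation, so there is no serious obstacle; the only point requiring care is the characteristic-$2$ reduction of the coefficients $3$ and $-1$, which is exactly what makes the clean monic form $t^3+\gamma t^2+\rho$ appear and which is why the hypothesis ``$q$ is even'' is stated. I would therefore present the substitution, perform the characteristic-$2$ simplification explicitly, and conclude.
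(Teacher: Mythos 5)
Your proof is correct and matches the paper's own argument: the paper simply notes that for even $q$ the osculating plane becomes $\boldsymbol{\pi}(1,t,t^2,t^3)$ and substitutes $K_{\rho,\gamma}$, which is exactly your characteristic-$2$ reduction performed in the opposite order (you substitute first, then reduce the coefficients $3$ and $-1$). The extra remark about $\pi_\T{osc}(\infty)$ is harmless but not needed, since the statement only concerns $t\in\F_q$.
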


 \begin{proof}
 For even $q$, we have $\pi_\T{osc}(t)=\boldsymbol{\pi}(1,t,t^2,t^3)$, $t\in\F_q$, that implies the assertion.
   \end{proof}

  \begin{theorem}
 Let $q$ be even. For the orbit $\Ob_\rho$, generated by a line $\Ls_\rho$, the following holds.
  \begin{align}\label{eq72:solut&incidrho}
 &\Pb_{\Tr}=1,~\Pb_{0_\Gamma}=\widetilde{\Nb}_{0}(\rho),~\Pb_{1_\Gamma}=
 \widetilde{\Nb}_{1}(\rho)=\widetilde{\Wb}_q(\rho),~\Pb_{3_\Gamma}=\widetilde{\Nb}_{3}(\rho).
      \end{align}
\end{theorem}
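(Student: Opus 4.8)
The plan is to compute the four incidence numbers $\Pb_{\Tr}, \Pb_{0_\Gamma}, \Pb_{1_\Gamma}, \Pb_{3_\Gamma}$ directly from the definitions, by counting on a single fixed line $\Ls_\rho$ (this is legitimate by Lemma~\ref{lem4:orb the same}, which guarantees the count is the same for every line in the orbit $\Ob_\rho$). For even $q$ the point types off the cubic are classified by how many osculating $\Gamma$-planes pass through the point: a $\Tr$-point, a $0_\Gamma$-point, a $1_\Gamma$-point, or a $3_\Gamma$-point lie on exactly $2,0,1,3$ osculating planes respectively (the $\Tr$ case via Lemma~\ref{lem4:Tpoint=2Gammapoint}). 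So the strategy is: for each point $K_{\rho,\gamma}$ on $\Ls_\rho$, count the number of osculating planes $\pi_\T{osc}(t)$ containing it, and read off its type.

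\emph{First} I would dispose of $\Pb_{\Tr}$. The $\Tr$-points on $\Ls_\rho$ are exactly the points lying on a tangent to $\C$, and by Theorem~\ref{th5:Lrho&tang}(iii) (even $q$ case) the line $\Ls_\rho$ meets the tangents in the single point $K_{\rho,\infty}$, coming from the unique root $t=0$; hence $\mathfrak{n}_q(\rho)=1$ and $\Pb_{\Tr}=1$. \emph{Next}, for the remaining points $K_{\rho,\gamma}$ with $\gamma\in\F_q$, I invoke Lemma~\ref{lem72:cubic_eqrho}: such a point lies in $\pi_\T{osc}(t)$ precisely when $t$ satisfies the cubic $\widetilde{F}_{\rho,\gamma}(t)=t^3+\gamma t^2+\rho=0$. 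Thus the number of osculating planes through $K_{\rho,\gamma}$ equals the number of distinct roots $t\in\F_q$ of $\widetilde{F}_{\rho,\gamma}$. By definition \eqref{eq71:cubeqrho}--\eqref{eq71:sum of roots} of $\widetilde{\Nb}_m(\rho)$, the number of $\gamma$ giving exactly $m$ roots is $\widetilde{\Nb}_m(\rho)$, so the number of $\gamma$-points of each type is: $0_\Gamma$ for $m=0$, $1_\Gamma$ for $m=1$, $3_\Gamma$ for $m=3$ (and $m=2$ is impossible by Lemma~\ref{lem71:N1&N2}(ii)). This immediately yields $\Pb_{0_\Gamma}=\widetilde{\Nb}_0(\rho)$, $\Pb_{1_\Gamma}=\widetilde{\Nb}_1(\rho)$, and $\Pb_{3_\Gamma}=\widetilde{\Nb}_3(\rho)$. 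Finally, $\widetilde{\Nb}_1(\rho)=\widetilde{\Wb}_q(\rho)$ is exactly Lemma~\ref{lem71:N1&N2}(i), equation~\eqref{eq71:N1(mu)}.

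\emph{The main subtlety} I would be careful about is the identification of ``number of osculating planes through $K_{\rho,\gamma}$'' with ``number of distinct roots $t\in\F_q$ of $\widetilde{F}_{\rho,\gamma}$,'' and its consistency with the point-type definition. One must check that the count does \emph{not} include the plane $\pi_\T{osc}(\infty)=\boldsymbol{\pi}(0,0,0,1)$ (a point $K_{\rho,\gamma}$ with $\gamma\in\F_q$ has last coordinate $1\ne0$, so it never lies in $\pi_\T{osc}(\infty)$), and that the root $t=0$ giving $K_{\rho,\infty}\in\TT_0$ is handled separately from the $\gamma\in\F_q$ points — which is precisely why $\Pb_{\Tr}$ is peeled off first. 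I expect the genuine obstacle to be merely bookkeeping: ensuring the partition of $\Ls_\rho$ into $K_{\rho,\infty}$ plus the $q$ points $K_{\rho,\gamma}$ ($\gamma\in\F_q$) matches the root-counting decomposition $\widetilde{\Nb}_0+\widetilde{\Nb}_1+\widetilde{\Nb}_3=q$, and that no point is double-counted. Once these alignments are verified, the result follows by reading off the definitions, with no further computation needed.
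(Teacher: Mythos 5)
Your proof is correct and takes essentially the same approach as the paper's: both peel off the unique $\Tr$-point $K_{\rho,\infty}$ via Theorem \ref{th5:Lrho&tang}, convert the count of osculating planes through $K_{\rho,\gamma}$, $\gamma\in\F_q$, into root-counting for $\widetilde{F}_{\rho,\gamma}$ via Lemma \ref{lem72:cubic_eqrho}, rule out the $m=2$ case by Lemmas \ref{lem4:Tpoint=2Gammapoint} and \ref{lem71:N1&N2}(ii), and pass to the whole orbit by Lemma \ref{lem4:orb the same}. Your additional verifications (that $K_{\rho,\gamma}\notin\pi_\T{osc}(\infty)$ for $\gamma\in\F_q$, and the partition bookkeeping) are details the paper leaves implicit.
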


\begin{proof}
By Lemma \ref{lem72:cubic_eqrho}, if, for a fixed $\gamma$,  the equation $\widetilde{F}_{\rho,\gamma}(t)$ \eqref{eq71:cubeqrho} has exactly $m$ distinct solutions $t$ in $\F_q$ then the point $K_{\rho,\gamma}$ belongs to exactly $m$ distinct osculating planes. So, the set $\Ls_{\rho}\setminus\{K_{\rho,\infty}\}$ contains $\widetilde{\Nb}_m(\rho)$ points belonging to exactly $m$ distinct osculating planes. In particular, if $m=2$, they are $\Tr$-points, see Lemma \ref{lem4:Tpoint=2Gammapoint}. But, by Lemma \ref{lem71:N1&N2}(ii), $\widetilde{\Nb}_2(\rho)=0$ for all $\rho$. Also, by Theorem \ref{th5:Lrho&tang}, we have on $\Ls_{\rho}$ one $\Tr$-point $K_{\rho,\infty}$. Finally, we use Lemma~\ref{lem4:orb the same}.
\end{proof}

  \begin{theorem}\label{th72:incidrho2m-1}
 Let $q$ be even. Let $\widetilde{\Wb}_q(\rho)$ be as in \eqref{eq71:Wtilde}, \eqref{eq71:q=2 2m-1}, \eqref{eq71:q=2 2m}. Let the orbit $\Ob_\rho$ be  generated by a line $\Ls_\rho$. Then, for the point-line incidence
matrix corresponding to the orbit the following holds:

 Let $q=2^{2m-1}\equiv  -1\pmod3$. Then $\#\Ob_\rho=q^3-q$ for all $\rho$; $\widetilde{\Wb}_q(\rho)=q/2$,  and
\begin{align}
&\Pb_{\Tr}=1,~\Lb_{\Tr}=q-1;~\Pb_{1_\Gamma}=\frac{q}{2},~\Lb_{1_\Gamma}=q;\db\label{eq7:2 2m-1}\\
&\Pb_{3_\Gamma}=\frac{q-2}{6},~\Lb_{3_\Gamma}=q-2;~\Pb_{0_\Gamma}=\frac{q+1}{3},~\Lb_{0_\Gamma}=q+1.\notag
 \end{align}

 Let $q=2^{2m}\equiv1\pmod3$. Then $\#\Ob_\rho=\frac{1}{3}(q^3-q)$ for all $\rho$, $\widetilde{\Wb}_q(\rho)$ is as in \eqref{eq71:q=2 2m}, and
\begin{align}
&\Pb_{\Tr}=1,~\Lb_{\Tr}=\frac{1}{3}(q-1),~
\Pb_{1_\Gamma}=\widetilde{\Wb}_q(\rho),~\Lb_{1_\Gamma}=\frac{2}{3}\widetilde{\Wb}_q(\rho),\db\label{eq7:2 2m}\\
&\Pb_{3_\Gamma}=\frac{q-1-\widetilde{\Wb}_q(\rho)}{3},~\Lb_{3_\Gamma}=\frac{2(q-1-\widetilde{\Wb}_q(\rho))}{3},~
\Pb_{0_\Gamma}=\Lb_{0_\Gamma}=\frac{2q-2\widetilde{\Wb}_q(\rho)+1}{3}.\notag
 \end{align}

The plane-line incidence matrix contains, according to \eqref{eq4:=0}--\eqref{eq4:Lb=Lambda}, the same values of the point-line incidence matrix, but in this
case they refer to $\Pi_\pi, \Lambda_\pi$ instead of $\Pb_\pk, \Lb_\pk$.
\end{theorem}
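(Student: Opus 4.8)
The plan is to derive each incidence number from the general relations already established and from the root-counting lemmas, treating the two parities of the exponent separately. First I would recall from Theorem~\ref{th6:main}(iii)(iv) the orbit sizes: $\#\Ob_\rho=q^3-q$ when $q=2^{2m-1}\equiv-1\pmod3$ and $\#\Ob_\rho=\tfrac13(q^3-q)$ when $q=2^{2m}\equiv1\pmod3$. I would then take as input the four point-counts computed in the preceding theorem, namely $\Pb_{\Tr}=1$, $\Pb_{1_\Gamma}=\widetilde{\Nb}_1(\rho)=\widetilde{\Wb}_q(\rho)$, $\Pb_{0_\Gamma}=\widetilde{\Nb}_0(\rho)$, and $\Pb_{3_\Gamma}=\widetilde{\Nb}_3(\rho)$, together with the constraints \eqref{eq71:roots} that $\widetilde{\Nb}_2(\rho)=0$ and $\widetilde{\Nb}_0(\rho)+\widetilde{\Nb}_1(\rho)+\widetilde{\Nb}_3(\rho)=q$.

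The core of the argument is to convert these raw point-counts into the displayed closed forms using the two linear identities in \eqref{eq4:EnGorbit3}. Substituting $\Pb_{\Tr}=1$ into the first identity gives $\Pb_{3_\Gamma}=(q+1-\Pb_{1_\Gamma}-2)/3=(q-1-\widetilde{\Wb}_q(\rho))/3$, and the second then yields $\Pb_{0_\Gamma}=\Pb_{\Tr}+2\Pb_{3_\Gamma}=(2q-2\widetilde{\Wb}_q(\rho)+1)/3$; this already matches the second display. For the companion line-counts I would apply the reciprocity formula $\Lb_{\pk}=\Pb_{\pk}\cdot\#\Ob_\rho/\#\M_\pk$ from \eqref{eq4:obtainLamb1}, inserting the orbit size for the relevant parity and the point-orbit sizes $\#\M_\Tr=q^2+q$, $\#\M_{1_\Gamma}=(q^3-q)/2$, $\#\M_{3_\Gamma}=(q^3-q)/6$, $\#\M_{0_\Gamma}=(q^3-q)/3$ from Theorem~\ref{th21:Hirs}(ii)(iii). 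For instance, with $\#\Ob_\rho=\tfrac13(q^3-q)$ one gets $\Lb_{\Tr}=1\cdot\tfrac13(q^3-q)/(q^2+q)=\tfrac13(q-1)$, reproducing the first entry of \eqref{eq7:2 2m}; the remaining three line-counts follow by the same one-line substitution.

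For the case $q=2^{2m-1}$ the plan specializes the above with $\widetilde{\Wb}_q(\rho)=q/2$ from Lemma~\ref{lem71:q2m-1}, so that $\Pb_{1_\Gamma}=q/2$, $\Pb_{3_\Gamma}=(q-2)/6$, $\Pb_{0_\Gamma}=(q+1)/3$, and then the reciprocity with $\#\Ob_\rho=q^3-q$ turns each of these into the integer line-counts $\Lb_{\Tr}=q-1$, $\Lb_{1_\Gamma}=q$, $\Lb_{3_\Gamma}=q-2$, $\Lb_{0_\Gamma}=q+1$ of \eqref{eq7:2 2m-1}; I would note that divisibility by $3$ holds here because $q\equiv-1\pmod3$. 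The final sentence of the theorem, asserting that the plane-line matrix carries the identical numbers, is immediate from Theorem~\ref{th4:the same} and needs no further work.

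The main obstacle is not conceptual but arithmetic bookkeeping: one must verify that every quotient in the reciprocity step is genuinely an integer and simplifies exactly to the stated form, which relies delicately on the congruence class of $q$ modulo $3$ and on the parity-dependent value of $\widetilde{\Wb}_q(\rho)$. In particular, in the even-exponent case the counts $\Lb_{1_\Gamma}$, $\Lb_{3_\Gamma}$, $\Lb_{0_\Gamma}$ are expressed directly in terms of $\widetilde{\Wb}_q(\rho)$ rather than reduced to a single numeral, so I would be careful to present them in the same unreduced shape as the statement and to confirm, via \eqref{eq71:q=2 2m}, that these are integers for each cube/non-cube value of $\rho$.
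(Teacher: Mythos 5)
Your proposal is correct and follows essentially the same route as the paper's proof: both take $\Pb_{\Tr}=1$ and $\Pb_{1_\Gamma}=\widetilde{\Wb}_q(\rho)$ from \eqref{eq72:solut&incidrho}, derive $\Pb_{3_\Gamma}$ and $\Pb_{0_\Gamma}$ from \eqref{eq4:EnGorbit3} (the paper gets $\Pb_{0_\Gamma}$ via \eqref{eq71:roots}, you via the second identity of \eqref{eq4:EnGorbit3}, which is an immaterial difference), specialize with Lemma \ref{lem71:q2m-1} in the $q=2^{2m-1}$ case, compute the $\Lb_{\pk}$ by the reciprocity \eqref{eq4:obtainLamb1}, and finish with Theorem \ref{th4:the same}. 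One trivial slip: the orbit size $\tfrac13(q^3-q)$ for $q=2^{2m}\equiv1\pmod3$ comes from Theorem \ref{th6:main}(i) (even $q$), not part (iv), which concerns odd $q$; the values you use are nevertheless the correct ones.
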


\begin{proof}
For both the cases \eqref{eq7:2 2m-1} and \eqref{eq7:2 2m}, by \eqref{eq72:solut&incidrho}, we have $\Pb_{\Tr}=1$, $\Pb_{1_\Gamma}=\widetilde{\Nb}_{1}(\rho)=\widetilde{\Wb}_q(\rho)$. Also, we use \eqref{eq4:obtainLamb1} to obtain $\Lb_{\pk}$ and take the sizes of orbits $\M_\pk$ from \eqref{eq21:pi(pk)} and the orbits $\Ob_\rho$ from Theorem \ref{th6:main}.

For \eqref{eq7:2 2m-1}, by Lemma \ref{lem71:q2m-1}, $\widetilde{\Wb}_q(\rho)=q/2$. Now, from~\eqref{eq4:EnGorbit3}, we have  $\Pb_{3_\Gamma}=(q-2)/6=\widetilde{\Nb}_{3}(\rho)$.  Then, by \eqref{eq71:roots}, we obtain  $\widetilde{\Nb}_{0}(\rho)=(q+1)/3=\Pb_{0_\Gamma}$.

For \eqref{eq7:2 2m}, from~\eqref{eq4:EnGorbit3}, we have  $\Pb_{3_\Gamma}=(q-1-\widetilde{\Wb}_q(\rho))/3=\widetilde{\Nb}_{3}(\rho)$.  Then, by \eqref{eq71:roots}, we obtain  $\widetilde{\Nb}_{0}(\rho)=(2q-2\widetilde{\Wb}_q(\rho)+1)/3=\Pb_{0_\Gamma}$.

The last assertion follows from Theorem \ref{th4:the same}.
\end{proof}

\section{Orbits  $\boldsymbol{\Ob_\rho}$, even $\boldsymbol{q}$}\label{sec8:orbits even}
\begin{corollary}\label{cor72:two dist orb}
 Let $q=2^{2m}\equiv1\pmod3$. Let $\rho'$ be a cube in $\F_q$ whereas $\rho''$ be a non-cube. Then the $\frac{1}{3}(q^3-q)$-orbits $\Ob_{\rho'}$ and $\Ob_{\rho''}$ generated by  $\Ls_{\rho'}$- and $\Ls_{\rho''}$-lines, respectively, are distinct.
\end{corollary}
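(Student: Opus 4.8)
The plan is to separate the two orbits by means of a numerical invariant, since their common cardinality $\tfrac13(q^3-q)$ (Theorem \ref{th72:incidrho2m-1}) rules out distinguishing them by size. The natural invariant to use is the point-orbit datum $\Pb_{1_\Gamma}$, the number of $1_\Gamma$-points lying on a line of the orbit. By Lemma \ref{lem4:orb the same}(ii) this count is the same for all lines of a given orbit, hence it is genuinely a function of the orbit; consequently, if the two orbits coincided, the corresponding values of $\Pb_{1_\Gamma}$ would have to agree. I would therefore establish the contrapositive by showing these values differ.

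First I would read off from Theorem \ref{th72:incidrho2m-1}, equation \eqref{eq7:2 2m}, the identity $\Pb_{1_\Gamma}=\widetilde{\Wb}_q(\rho)$ valid for $q=2^{2m}\equiv1\pmod3$ and any admissible $\rho$. Then I would apply Lemma \ref{lem71:q2m} to evaluate this quantity in the two regimes dictated by the hypothesis: since $\rho'$ is a cube, $\Pb_{1_\Gamma}(\Ob_{\rho'})=\widetilde{\Wb}_q(\rho')=\tfrac12 q+(-1)^m\sqrt q$, whereas since $\rho''$ is a non-cube, $\Pb_{1_\Gamma}(\Ob_{\rho''})=\widetilde{\Wb}_q(\rho'')=\tfrac12 q-(-1)^m\tfrac12\sqrt q$. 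The cube/non-cube dichotomy of $\rho$ is exactly the case split governing $\widetilde{\Wb}_q(\rho)$, which is the entire content of Lemma \ref{lem71:q2m}.

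The remaining step is the elementary check that these two values are never equal. Their difference is $(-1)^m\tfrac32\sqrt q$, equivalently $(-2)^m-(-2)^{m-1}=(-2)^{m-1}(-3)=-3(-2)^{m-1}$, which is nonzero for every $m\ge2$. Hence $\Pb_{1_\Gamma}(\Ob_{\rho'})\ne\Pb_{1_\Gamma}(\Ob_{\rho''})$, and because $\Pb_{1_\Gamma}$ is constant on each orbit, the orbits $\Ob_{\rho'}$ and $\Ob_{\rho''}$ must be distinct. I do not anticipate any real obstacle here: the argument is a direct consequence of the formula for $\widetilde{\Wb}_q(\rho)$ already proved in Lemma \ref{lem71:q2m} together with the orbit-invariance of Lemma \ref{lem4:orb the same}, and the only thing requiring a moment's attention is confirming that the two exponential-sum values coming from cubes and non-cubes genuinely never coincide.
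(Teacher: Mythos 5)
Your proposal is correct and follows exactly the paper's own route: the paper's one-line proof invokes Lemma \ref{lem71:q2m} and Theorem \ref{th72:incidrho2m-1}, i.e.\ precisely the orbit invariant $\Pb_{1_\Gamma}=\widetilde{\Wb}_q(\rho)$ from \eqref{eq7:2 2m}, which takes the value $\tfrac12 q+(-1)^m\sqrt q$ for cubes and $\tfrac12 q-(-1)^m\tfrac12\sqrt q$ for non-cubes, and these never coincide. You have merely spelled out the details (including the orbit-invariance from Lemma \ref{lem4:orb the same} and the nonvanishing of the difference $(-1)^m\tfrac32\sqrt q$) that the paper leaves implicit.
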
 

\begin{proof}
  The assertion follows from Lemma \ref{lem71:q2m} and Theorem \ref{th72:incidrho2m-1}.
\end{proof}


\begin{theorem}\label{th72:diforbLs}
 \begin{description}
  \item[(i)]
Let $q=2^{2m}\equiv1\pmod3$.  Two lines $\Ls_{\rho'}$ and $\Ls_{\rho''}$ of type \eqref{eq3:Lrho} belong to different orbits of $G_q$ if and only if $\log \rho'\not\equiv \log \rho''\pmod3$.

  \item[(ii)]
Let $q=2^{2m-1}\equiv  -1\pmod3$. Then two lines $\Ls_{\rho'}$ and $\Ls_{\rho''}$ of type \eqref{eq3:Lrho} always belong to the same orbit of $G_q$.
\end{description}
\end{theorem}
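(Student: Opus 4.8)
The plan is to reduce both parts to the action of the diagonal subgroup $G_q^\infty$ of $G_q$ fixing $\Pf(0,0,1,0)$. Part (ii) requires no fresh argument: since $q=2^{2m-1}\equiv-1\pmod3$, Lemma~\ref{lem82:UniqueOrbit_qm1}---which is stated for every $q\equiv-1\pmod3$, even or odd---already places all $\Ls_\rho$-lines in the single orbit $\Ob_1$, so I would dispose of it in one line. For part (i) I would first restate the condition in multiplicative terms: by Lemma~\ref{lem82:logofcubes}, when $q\equiv1\pmod3$ the congruence $\log\rho'\equiv\log\rho''\pmod3$ holds if and only if $\rho'/\rho''$ is a cube in $\F_q^*$, equivalently $\rho'$ and $\rho''$ lie in the same class $\Rk_m$ of \eqref{eq4:Rmdefin}. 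Thus the goal becomes to show that $\Ls_{\rho'}$ and $\Ls_{\rho''}$ are $G_q$-equivalent if and only if $\rho'/\rho''$ is a cube.

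For the ``if'' direction I would exhibit an explicit projectivity. Writing $\rho'/\rho''=d^3$ with $d\in\F_q^*$, take the matrix $\MM^\infty$ of \eqref{eq6:MMinf} with this $d$. A direct computation gives $[0,0,1,0]\times\MM^\infty=[0,0,d^2,0]=[0,0,1,0]$ and $[\rho',0,0,1]\times\MM^\infty=[\rho',0,0,d^3]=[\rho'',0,0,1]$ projectively, so $\MM^\infty$ fixes $K_{\rho',\infty}=K_{\rho'',\infty}$ and carries $K_{\rho',0}$ to $K_{\rho'',0}$; hence it maps $\Ls_{\rho'}$ onto $\Ls_{\rho''}$, and the two lines share an orbit.

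The substance is the ``only if'' direction, and the key observation---which I expect to be the crux---is that for even $q$ the \emph{unique} $\Tr$-point on $\Ls_\rho$ is $K_{\rho,\infty}=\Pf(0,0,1,0)$, and this point does not depend on $\rho$, by Theorem~\ref{th5:Lrho&tang}(ii)(iii) (recall $\mathfrak{n}_q(\rho)=1$ for even $q$). Now suppose $\Psi\in G_q$ satisfies $\Ls_{\rho'}\Psi=\Ls_{\rho''}$. Since $\Psi$ fixes $\C$ setwise it preserves the type of every point, in particular it sends $\Tr$-points to $\Tr$-points; therefore it must carry the unique $\Tr$-point of $\Ls_{\rho'}$ to the unique $\Tr$-point of $\Ls_{\rho''}$, that is, it fixes $\Pf(0,0,1,0)$. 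Consequently $\Psi\in G_q^\infty$, so by Lemma~\ref{lem6:stabKinf} it equals $\MM^\infty$ for some $d\in\F_q^*$. The computation of the previous paragraph then gives $\Ls_{\rho'}\MM^\infty=\Ls_{\rho'/d^3}$, whence $\rho''=\rho'/d^3$ and $\rho'/\rho''=d^3$ is a cube, as required.

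I would close by remarking that this argument is strictly finer than Corollary~\ref{cor72:two dist orb}: the incidence invariant $\widetilde{\Wb}_q(\rho)$ only separates cubes from non-cubes and cannot distinguish $\Rk_1$ from $\Rk_2$, whereas the rigidity of the $\rho$-independent $\Tr$-point forces the three classes into three genuinely distinct orbits. The single delicate point to verify with care is that $\Pf(0,0,1,0)$ really is the sole $\Tr$-point for \emph{all} $\rho$, so that $\Psi$ has no freedom in where to send it; this is exactly what Theorem~\ref{th5:Lrho&tang} supplies, and it is the reason the whole classification collapses onto the diagonal subgroup $G_q^\infty$.
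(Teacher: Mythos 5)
Your proof is correct, and it follows the paper's overall skeleton: both arguments reduce the problem to showing that any $\Psi\in G_q$ with $\Ls_{\rho'}\Psi=\Ls_{\rho''}$ must fix $\Pf(0,0,1,0)$, hence lie in the diagonal subgroup $G_q^{\infty}$ of Lemma \ref{lem6:stabKinf}, after which $[\rho',0,0,1]\times\MM^{\infty}=[\rho',0,0,d^3]$ forces $d^3=\rho'/\rho''$, and Lemma \ref{lem82:logofcubes} converts the cube condition into the congruence on discrete logarithms; part (ii) is settled by Lemma \ref{lem82:UniqueOrbit_qm1} in both treatments. The one genuine difference is how the crucial step---ruling out $K_{\rho',\infty}\Psi=K_{\rho'',\gamma}$ with $\gamma\in\F_q$---is handled. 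The paper does it by direct computation: writing $\MM^\Psi$ in the general form \eqref{eq21:M} and using that $q$ is even, the relation $[0,0,1,0]\times\MM^\Psi=[ab^2,b^2c,ad^2,cd^2]=[\rho'',0,\gamma,1]$ forces $c,d\ne0$, then $b=0$, then $\rho''=ab^2=0$, a contradiction. You instead invoke the geometric invariant already established in Theorem \ref{th5:Lrho&tang}(ii)(iii): for even $q$ the line $\Ls_\rho$ carries exactly one $\Tr$-point, namely $\Pf(0,0,1,0)$ independently of $\rho$, and since elements of $G_q$ preserve point types, $\Psi$ has no choice but to fix that point. Both arguments are valid and rely only on material proved before Section \ref{sec8:orbits even}; yours is the more conceptual, making transparent why the classification collapses onto $G_q^{\infty}$, while the paper's is self-contained linear algebra that does not need the tangent-intersection analysis of Section \ref{sec5:intersec}. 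Your closing remark is also apt: Corollary \ref{cor72:two dist orb}, resting on $\widetilde{\Wb}_q(\rho)$, which by Lemma \ref{lem71:q2m} sees only the cube/non-cube dichotomy, cannot separate the two non-cube classes $\Rk_1$ and $\Rk_2$, so a projectivity argument of this kind is genuinely needed to obtain three distinct orbits.
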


\begin{proof}
    \begin{description}
    \item[(i)]   Let $\Psi\in G_q$ be a projectivity such that $\Ls_{\rho'}\Psi =\Ls_{\rho''}$.
Suppose $K_{\rho',\infty} \Psi  =  K_{\rho'',\gamma}$ for some $\gamma\in\F_q$. The general form of the matrix $\MM^\Psi$ corresponding to $\Psi$ is given by \eqref{eq21:M}. We have for $q$ even:
\begin{equation*}
[0,0,1,0]\times\MM^\Psi=[ab^2,b^2c,ad^2,cd^2]=[\rho'',0,\gamma,1].
\end{equation*}
This implies $c,d \neq 0$, $b=0$ and $\rho''=0$, contradiction.

 Thus, $K_{\rho',\infty} \Psi= K_{\rho'',\infty}$, i.e. $\Pf(0,0,1,0)\Psi=\Pf(0,0,\gamma,0)$, that implies  $a,d \neq 0$, $b=c=0$, so the matrix $\MM^{\Psi}$ must be of the same form as $\MM^{\infty}$ \eqref{eq6:MMinf} but the set of possible values of $d$ can be a proper subset of $\F_q^*$.
We should provide  $K_{\rho',0} \Psi  =  K_{\rho'',\gamma}$ for some $\gamma\in\F_q$, i.e. $[\rho',0,0,1]\MM^{\Psi}=[\rho'',0,\gamma,1]$.
As $ [\rho',0,0,1]\times\MM^{\infty} = [\rho',0,0,d^3]$, it can happen only if $d^3=\rho'/\rho''$. \\
By Lemma \ref{lem82:logofcubes}, $\rho'/\rho''$ is a cube if and only if log $(\rho'/\rho'') \equiv0\pmod3$ that happens if and only if log $\rho' \equiv$ log $\rho'' \pmod3$.
    \item[(ii)]  We use Lemma \ref{lem82:UniqueOrbit_qm1}.  \qedhere
\end{description}
\end{proof}

\begin{corollary}
  Let $q$ be even.
  \begin{description}
    \item[(i)] Let $\alpha$ be a primitive element of $\F_q$.  If $q=2^{2m}\equiv1\pmod3$, there are three distinct $\frac{1}{3}(q^3-q)$-orbits generated by $\Ls_\rho$-lines with $\rho=\alpha^j$, $j=0,1,-1$, respectively.
    \item[(ii)] If $q=2^{2m-1}\equiv  -1\pmod3$, all $\Ls_\rho$-lines generate the same $(q^3-q)$-orbit.
    \item[(iii)] All the orbits generated by $\Ls_\rho$-lines are different from the ones generated by $\ell_\mu$-lines.
  \end{description}
\end{corollary}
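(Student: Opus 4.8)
The plan is that this corollary is essentially a bookkeeping consequence of the three main results already established for even $q$: Theorem~\ref{th72:diforbLs} on when two $\Ls_\rho$-lines share an orbit, Theorem~\ref{th6:main} (equivalently Theorem~\ref{th72:incidrho2m-1}) on orbit sizes, and Theorem~\ref{th5:Lrho&tang}(iii) comparing $\Ob_\rho$ with $\Os_\mu$. Accordingly, each of the three items reduces to invoking the correct earlier statement and tracking discrete logarithms modulo $3$; there is no genuinely new content, so the work is organizational rather than technical.

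For item (i), I would start from Theorem~\ref{th72:diforbLs}(i), which says that $\Ls_{\rho'}$ and $\Ls_{\rho''}$ lie in the same $G_q$-orbit precisely when $\log\rho'\equiv\log\rho''\pmod 3$. The chosen representatives $\rho=\alpha^{j}$ with $j=0,1,-1$ have $\log\rho=0,\,1,\,q-2$; since $q\equiv1\pmod3$ we get $q-2\equiv 2\pmod3$, so the three logarithms are $0,1,2\pmod3$, pairwise distinct. Hence the three lines generate three distinct orbits, and because $3\mid(q-1)$ the classes $\Rk_0,\Rk_1,\Rk_2$ of \eqref{eq4:Rmdefin} partition $\F_q^*$ and orbit membership depends only on $\log\rho\bmod 3$, these three orbits in fact exhaust all orbits generated by $\Ls_\rho$-lines. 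Each has size $\tfrac13(q^3-q)$ by Theorem~\ref{th6:main}(i) (equivalently by the orbit size recorded in Theorem~\ref{th72:incidrho2m-1}). Item (ii) is even shorter: Theorem~\ref{th72:diforbLs}(ii) states that for $q=2^{2m-1}\equiv-1\pmod3$ any two $\Ls_\rho$-lines lie in one and the same orbit, whose size is $q^3-q$ by Theorem~\ref{th6:main}(iii).

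For item (iii), the decisive invariant is the number of $\Tr$-points on a line, which is constant along an orbit by Lemma~\ref{lem4:orb the same}(ii). By Theorem~\ref{th5:Lrho&tang}(iii) every $\Ls_\rho$-line carries exactly one $\Tr$-point (namely $K_{\rho,\infty}$, as $\mathfrak{n}_q(\rho)=1$ for even $q$), whereas by Theorem~\ref{th23:orbitellmu}(iii) every $\ell_\mu$-line carries $\mathfrak{n}_q(\mu)=2$ $\Tr$-points. Since $1\neq 2$, no $\Ob_\rho$ can equal any $\Os_\mu$; this is precisely the ``Moreover'' clause already stated in Theorem~\ref{th5:Lrho&tang}(iii), so item (iii) is immediate. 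The only place demanding any care is the residue computation $q-2\equiv 2\pmod 3$ in item (i) together with the completeness remark that the three representatives capture every orbit; everything else is a direct citation.
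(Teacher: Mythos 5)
Your proposal is correct and follows essentially the same route as the paper, whose proof is a one-line citation of Theorems \ref{th5:Lrho&tang}, \ref{th6:main}, and \ref{th72:diforbLs} --- exactly the three results you invoke, with item (iii) indeed being the ``Moreover'' clause of Theorem \ref{th5:Lrho&tang}(iii) via the $\Tr$-point count $1\neq 2$. Your added details (the computation $\log\alpha^{-1}=q-2\equiv 2\pmod 3$ and the completeness observation that the classes $\Rk_0,\Rk_1,\Rk_2$ exhaust all $\Ls_\rho$-lines) are correct and merely make explicit what the paper leaves implicit.
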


\begin{proof}
  The assertions follow from Theorems \ref{th5:Lrho&tang}, \ref{th6:main}, and \ref{th72:diforbLs}.
\end{proof}

\section{A cubic equation and incidence matrices, odd $\boldsymbol{q}$}\label{sec8:cubEqIncOdd q}

We consider a cubic equation regarding $t$.
\begin{align}\label{eq81:cubEqodd}
 F_{\rho,\gamma}(t)=t^3-3\gamma t^2-\rho=0,~\gamma,t,\rho\in\F_q^*,~ q\not\equiv0\pmod3.
\end{align}
For $F_{\rho,\gamma}(t)$, the discriminant $\Delta$ and the Hessian $H(T)$, its coefficients $A_i$ and roots $\beta_{1,2}$, defined in \cite[Section 1.8, equation (1.14), Lemma 1.18, Theorem 1.28]{Hirs_PGFF}, are as follows:
\begin{align}\label{eq81:discrim}
& \Delta=-27\rho(4\gamma^3+\rho),~\gamma,\rho\in\F_q^*;~H(T)=A_0T^2+A_1T+A_2;\db\\
& A_0=-9\gamma^2,~A_1=-9\rho,~A_2=9\rho\gamma,~\beta_{1,2}=\frac{-\rho\pm\rho\sqrt{1+4\rho^{-1}\gamma^3}}{2\gamma^2},~\gamma,\rho\in\F_q^*.\notag
   \end{align}
  We denote $\Nb_m(\rho)$  the number of $\gamma\in\F_q^*$ such that the equation $F_{\rho,\gamma}(t)$ \eqref{eq81:cubEqodd}  has exactly $m$ distinct solutions $t$ in $\F_q^*,~m=0,1,2,3$.

\begin{lemma}\label{lem81:1root}
Let $q\equiv\xi\pmod3$. The equation $F_{\rho,\gamma}(t)$ \eqref{eq81:cubEqodd} has exactly $1$ root $t$ in $\F_q$ if and only if $4\gamma^3+\rho\ne0$ and, also,
$1+4\rho^{-1}\gamma^3$ is a square \emph{(}resp. non-square\emph{)} in $\F_q$ for $\xi=-1$ \emph{(}resp. $\xi=1$\emph{)}.
\end{lemma}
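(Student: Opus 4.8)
The plan is to reduce the root count to the classical discriminant criterion for cubics over $\F_q$ and then to translate that criterion into the stated square/non-square condition by a short quadratic-character computation. First I would invoke the classification recalled from \cite[Lemma 1.18, Theorem 1.28]{Hirs_PGFF}: for odd $q$ with $q\not\equiv0\pmod3$, a cubic has exactly one root in $\F_q$ precisely when its discriminant is a nonzero non-square. Since by \eqref{eq81:discrim} we have $\Delta=-27\rho(4\gamma^3+\rho)$ with $\rho\in\F_q^*$, the condition $\Delta\ne0$ is equivalent to $4\gamma^3+\rho\ne0$, which accounts for the first requirement in the statement; it then remains only to decide when the nonzero $\Delta$ is a non-square.

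For the character computation I would factor $\Delta=-27\rho^2(1+4\rho^{-1}\gamma^3)$ and apply the multiplicativity of $\eta$ from \eqref{eq5:quadr-char}. Using $\eta(\rho^2)=1$ and $\eta(-27)=\eta(-3)\eta(9)=\eta(-3)$, this gives $\eta(\Delta)=\eta(-3)\,\eta(1+4\rho^{-1}\gamma^3)$. Hence $\Delta$ is a non-square if and only if $\eta(1+4\rho^{-1}\gamma^3)=-\eta(-3)$, so that the whole equivalence hinges on the value of $\eta(-3)$.

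Next I would evaluate $\eta(-3)$ via the standard fact that $-3$ is a square in $\F_q$ if and only if $\F_q$ contains a primitive cube root of unity, i.e. if and only if $q\equiv1\pmod3$. Thus $\eta(-3)=1$ when $\xi=1$ and $\eta(-3)=-1$ when $\xi=-1$. Substituting into the relation above, for $\xi=1$ the cubic has exactly one root iff $1+4\rho^{-1}\gamma^3$ is a non-square, and for $\xi=-1$ iff it is a square; together with $4\gamma^3+\rho\ne0$ this is exactly the assertion.

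The step I expect to require the most care is the boundary case $\Delta=0$, i.e. $4\gamma^3+\rho=0$, where the discriminant criterion alone does not a priori exclude a single (triple) root. I would settle this by comparing coefficients: a triple root $r$ of $t^3-3\gamma t^2-\rho$ would force $r=\gamma$ and simultaneously $3r^2=0$, hence $r=\gamma=0$, contradicting $\gamma\in\F_q^*$. Consequently, when $\Delta=0$ the cubic has a double root together with a distinct simple root, both lying in $\F_q$, giving exactly two roots and never one. This confirms that the requirement $4\gamma^3+\rho\ne0$ is genuinely necessary and closes the equivalence.
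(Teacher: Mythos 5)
Your proof is correct, but it takes a genuinely different route from the paper's. The paper stays entirely inside Hirschfeld's Hessian formalism: for $\Delta=0$ it invokes \cite[Corollary 1.30]{Hirs_PGFF} (exactly one root would force all Hessian coefficients $A_i$ to vanish, which \eqref{eq81:discrim} rules out), and for $\Delta\ne0$ it invokes \cite[Theorem 1.34, Table 1.3]{Hirs_PGFF}, by which the root count is governed by whether the Hessian $H(T)$ has roots in $\F_q$, with the $\xi=\pm1$ dichotomy supplied by that table; since the Hessian roots $\beta_{1,2}$ in \eqref{eq81:discrim} involve $\sqrt{1+4\rho^{-1}\gamma^3}$, the square/non-square condition is immediate. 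You instead use the classical discriminant criterion (a cubic over $\F_q$, $q$ odd, with $\Delta\ne0$ has exactly one root in $\F_q$ iff $\Delta$ is a non-square) and then translate it via multiplicativity of $\eta$: writing $\Delta=-27\rho^2(1+4\rho^{-1}\gamma^3)$ gives $\eta(\Delta)=\eta(-3)\,\eta(1+4\rho^{-1}\gamma^3)$, and $\eta(-3)=1$ exactly when $q\equiv1\pmod3$. Your route is more self-contained and makes transparent \emph{why} the answer depends on $q\bmod 3$ --- it is precisely the quadratic character of $-3$ --- while the paper's route gets the condition with no character computation because the relevant square root is already sitting in the formula for $\beta_{1,2}$. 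Your treatment of the degenerate case $\Delta=0$ (excluding a triple root by comparing coefficients, which needs $\gamma\ne0$ and the characteristic conditions) is the hands-on analogue of the paper's appeal to $A_i\ne0$.

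Two minor points. First, your citation of \cite[Lemma 1.18, Theorem 1.28]{Hirs_PGFF} for the discriminant criterion is imprecise: in the paper those results are used only for the definitions of $\Delta$ and $H(T)$; the root-count classification is \cite[Theorem 1.34, Table 1.3]{Hirs_PGFF}. Second, your assertion that in the double-root case both roots lie in $\F_q$ deserves one line of justification, e.g.: the double root is a common root of $F_{\rho,\gamma}$ and its derivative $3t(t-2\gamma)$, and since $F_{\rho,\gamma}(0)=-\rho\ne0$ it must equal $2\gamma\in\F_q$, after which the simple root lies in $\F_q$ by polynomial division. (For the lemma itself you only need that the count is never $1$ when $\Delta=0$, which this gives.)
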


\begin{proof}
  By \cite[Corollary 1.30]{Hirs_PGFF}, for $\Delta=0$, $F_{\rho,\gamma}(t)$ has 1 root in $\F_q$ if all $A_i=0$. But $A_i\ne0$, see \eqref{eq81:discrim}. By \cite[Theorem 1,34, Table 1.3]{Hirs_PGFF}, for $\Delta\ne0$, $F_{\rho,\gamma}(t)$ has 1 root  in $\F_q$, if $H(T)$ has roots or not in $\F_q$ according to $\xi=-1$ or $\xi=1$, respectively.
\end{proof}

For $\beta\in\F_q$, let $\eta(\beta)$ be as in \eqref{eq5:quadr-char}.   We denote
\begin{align}\label{eq81:Nk}
 & \Nk_{q,\rho}\triangleq\#\{\gamma\,|\,\gamma\in\F_q^*,~\eta(1+4\rho^{-1}\gamma^3)=-1\},~q\equiv1\pmod3.
\end{align}

\begin{lemma}\label{lem81:N1}
\begin{description}
    \item[(i)] Let $q\equiv-1\pmod3$ be odd. Then $\Nb_1(\rho)=(q-3)/2$.

    \item[(ii)]  Let $q\equiv1\pmod3$ be odd. Then $\Nb_1(\rho)=\Nk_{q,\rho}$.
  \end{description}

\end{lemma}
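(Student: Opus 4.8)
The plan is to reduce both parts directly to Lemma \ref{lem81:1root}, which characterizes when $F_{\rho,\gamma}(t)$ has exactly one root in $\F_q$, and then to count the admissible $\gamma$. First I would observe that since $\rho\in\F_q^*$ we have $F_{\rho,\gamma}(0)=-\rho\ne0$, so every root in $\F_q$ is automatically a root in $\F_q^*$; hence $\Nb_1(\rho)$ counts exactly those $\gamma\in\F_q^*$ satisfying the condition of Lemma \ref{lem81:1root}. Next I would record the identity $1+4\rho^{-1}\gamma^3=\rho^{-1}(\rho+4\gamma^3)$, so that $1+4\rho^{-1}\gamma^3=0$ if and only if $4\gamma^3+\rho=0$. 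Consequently, whenever $\eta(1+4\rho^{-1}\gamma^3)=\pm1$ the quantity is nonzero, and the side condition $4\gamma^3+\rho\ne0$ of Lemma \ref{lem81:1root} holds automatically. This collapses the characterization to a single quadratic-character condition on $1+4\rho^{-1}\gamma^3$.

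For part (ii), with $\xi=1$, Lemma \ref{lem81:1root} then says $\Nb_1(\rho)=\#\{\gamma\in\F_q^*\mid \eta(1+4\rho^{-1}\gamma^3)=-1\}$, which is precisely the definition \eqref{eq81:Nk} of $\Nk_{q,\rho}$. So part (ii) is immediate.

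For part (i), with $\xi=-1$, the count becomes $\#\{\gamma\in\F_q^*\mid \eta(1+4\rho^{-1}\gamma^3)=1\}$. The key step is that for $q\equiv-1\pmod3$ we have $3\nmid q-1$, so the cubing map $\gamma\mapsto\gamma^3$ is a bijection of $\F_q^*$. Hence, as $\gamma$ runs over $\F_q^*$, the value $w=\gamma^3$ runs over $\F_q^*$, and since $4\rho^{-1}\ne0$ the value $u=1+4\rho^{-1}w$ runs bijectively over $\F_q\setminus\{1\}$. It then remains to count the nonzero squares in $\F_q\setminus\{1\}$: there are $(q-1)/2$ nonzero squares in $\F_q$, of which exactly one, namely $u=1$, is excluded, giving $(q-1)/2-1=(q-3)/2$.

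I do not expect a serious obstacle: part (ii) is a definition-chase, and part (i) rests on the single fact that cubing is a bijection when $3\nmid q-1$, together with the routine bookkeeping of removing the value $u=1$ from the range. The only point requiring care is verifying that the auxiliary condition $4\gamma^3+\rho\ne0$ is subsumed by the character condition, which the identity $1+4\rho^{-1}\gamma^3=\rho^{-1}(\rho+4\gamma^3)$ settles cleanly.
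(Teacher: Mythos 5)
Your proof is correct and follows essentially the same route as the paper: both parts reduce to Lemma \ref{lem81:1root}, with (ii) a definition chase (the case $4\gamma^3+\rho=0$ gives character $0$ and so cannot affect the count) and (i) resting on the fact that cubing is a bijection of $\F_q^*$ when $q\equiv-1\pmod 3$, so that $1+4\rho^{-1}\gamma^3$ sweeps out $\F_q\setminus\{1\}$, which contains exactly $(q-3)/2$ nonzero squares. Your additional observation that $F_{\rho,\gamma}(0)=-\rho\ne 0$ (so roots in $\F_q$ are automatically roots in $\F_q^*$) merely makes explicit a point the paper leaves tacit.
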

\begin{proof}
 \begin{description}
    \item[(i)]
If $\gamma$ runs over $\F_q^*\setminus\{-\sqrt[3]{\rho/4}\}$ then $\Delta\ne0$ and $1+4\rho^{-1}\gamma^3$ runs over  $\F_q^*\setminus\{1\}$ where there are exactly $(q-1)/2$ non-squares and $(q-3)/2$ non-zero squares. Now we use Lemma \ref{lem81:1root}.

    \item[(ii)] If $\Delta=0$ then $\rho=-4\gamma^3$ and $\eta(1+4\rho^{-1}\gamma^3)=0$. So, the case $\Delta=0$ does not influence the value $\Nk_{q,\rho}$. Now the assertion follows from Lemma \ref{lem81:1root}.\qedhere
  \end{description}
\end{proof}

\begin{lemma}\label{lem82:cubEq}
Let $q$ be odd. Let $\gamma,t\in\F_q^*$. Let the point $K_{\rho,\gamma}=\Pf(\rho,0,\gamma,1)$  belong to the osculating plane $\pi_\T{osc}(t)$. Then the values of $\rho,\gamma,t$ satisfy the cubic equation $F_{\rho,\gamma}(t)$~\eqref{eq81:cubEqodd}.
\end{lemma}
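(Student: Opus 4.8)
The plan is to mirror the computation already carried out for even $q$ in Lemma~\ref{lem72:cubic_eqrho}, but now using the odd-characteristic form of the osculating plane. First I would recall from \eqref{eq21:osc_plane} that for $t\in\F_q$ the osculating plane is $\pi_\T{osc}(t)=\boldsymbol{\pi}(1,-3t,3t^2,-t^3)$, so that a point $\Pf(x_0,x_1,x_2,x_3)$ lies in $\pi_\T{osc}(t)$ precisely when $x_0-3tx_1+3t^2x_2-t^3x_3=0$. This is the incidence condition I will substitute into.

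The key step is then to plug in the coordinates of $K_{\rho,\gamma}=\Pf(\rho,0,\gamma,1)$ from \eqref{eq3:K}, i.e.\ $x_0=\rho$, $x_1=0$, $x_2=\gamma$, $x_3=1$. The incidence equation becomes
\begin{equation*}
\rho-3t\cdot 0+3t^2\gamma-t^3=0,
\end{equation*}
that is $-t^3+3\gamma t^2+\rho=0$, and multiplying by $-1$ gives exactly $t^3-3\gamma t^2-\rho=0$, which is $F_{\rho,\gamma}(t)$ of \eqref{eq81:cubEqodd}. So the statement follows by a direct substitution, just as in the even case.

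I would also add the short remark that the hypotheses $\gamma,t\in\F_q^*$ are consistent here: the case $t=\infty$ need not be treated because the osculating plane $\pi_\T{osc}(\infty)=\boldsymbol{\pi}(0,0,0,1)$ contains $K_{\rho,\gamma}$ iff $x_3=0$, which fails for every point of the form $\Pf(\rho,0,\gamma,1)$; and $t=0$ gives the plane $\boldsymbol{\pi}(1,0,0,0)$, whose incidence would force $\rho=0$, excluded since $\rho\in\F_q^*$. Hence the relevant osculating planes correspond to $t\in\F_q^*$, matching the domain in \eqref{eq81:cubEqodd}.

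There is essentially no obstacle: this is a one-line verification of the incidence condition, with the only point of care being the correct sign convention for $\pi_\T{osc}(t)$ in odd characteristic (the $(1,-3t,3t^2,-t^3)$ form valid for $q\not\equiv0\pmod3$) and the overall sign normalization that turns the raw equation into the canonical form $F_{\rho,\gamma}(t)$. The proof will therefore be a single displayed substitution followed by the remark that $\widetilde{F}$ in \eqref{eq71:cubeqrho} and $F$ in \eqref{eq81:cubEqodd} arise from the two characteristic-dependent shapes of the osculating plane.
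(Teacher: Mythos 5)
Your proof is correct and is essentially the paper's own argument: the paper likewise just invokes the form $\pi_\T{osc}(t)=\boldsymbol{\pi}(1,-3t,3t^2,-t^3)$ and lets the substitution of $(\rho,0,\gamma,1)$ yield $t^3-3\gamma t^2-\rho=0$. Your added remarks on excluding $t=0$ and $t=\infty$ are accurate (and consistent with Lemma~\ref{lem3:pi0inf}) but not needed for the statement as posed.
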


 \begin{proof}
We have $\pi_\T{osc}(t)=\boldsymbol{\pi}(1,-3t,3t^2,-t^3)$, $t\in\F_q$, that implies the assertions.
   \end{proof}

 \begin{theorem}\label{th82:PT P1G}
 Let $q$ be odd. Let $\Nk_{q,\rho}$ be as in \eqref{eq81:Nk}. For the orbit $\Ob_\rho$, generated by a line $\Ls_\rho$, the following holds.
  \begin{align*}
 &\Pb_{\Tr}=\left\{\begin{array}{ccl}
                     2 & \T{if} & q\equiv-1\pmod3 \\
                     1 & \T{if} & q\equiv1\pmod3,~-2\rho\T{ is a non-cube in }\F_q \\
                     4 & \T{if} & q\equiv1\pmod3,~-2\rho\T{ is a cube in }\F_q
                   \end{array}
 \right.;\db\\
 &\Pb_{1_\Gamma}= \Nb_{1}(\rho)+1=(q-1)/2 \T{ if }  q\equiv-1\pmod3;\db \\
  &\Pb_{1_\Gamma}=\Nb_{1}(\rho)= \Nk_{q,\rho} \T{ if } q\equiv1\pmod3.
      \end{align*}
\end{theorem}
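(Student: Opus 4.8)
The quantities $\Pb_\Tr$ and $\Pb_{1_\Gamma}$ count points of a prescribed type on one line of $\Ob_\rho$, and by Lemma \ref{lem4:orb the same}(ii) this count does not depend on the line chosen, so the plan is to compute both on the representative line $\Ls_\rho=\{K_{\rho,\gamma}\mid\gamma\in\F_q^+\}$. For $\Pb_\Tr$ I would invoke Lemma \ref{lem5:roots}(i), which identifies $\Pb_\Tr$ with $\mathfrak{n}_q(\rho)$, the number of roots of $t^4+2\rho t=0$. When $q\equiv-1\pmod3$ the cubing map is a bijection of $\F_q$, so $-2\rho$ is automatically a cube and Theorem \ref{th5:Lrho&tang}(iv)(a) gives $\mathfrak{n}_q(\rho)=2$. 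When $q\equiv1\pmod3$, Theorem \ref{th5:Lrho&tang}(iii) yields $\mathfrak{n}_q(\rho)=1$ for $-2\rho$ a non-cube, while Theorem \ref{th5:Lrho&tang}(iv)(b) yields $\mathfrak{n}_q(\rho)=4$ for $-2\rho$ a cube. This disposes of the first display.

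For $\Pb_{1_\Gamma}$ the idea is to determine, point by point along $\Ls_\rho$, how many osculating planes pass through each point, since a point off $\C$ is a $1_\Gamma$-point precisely when it lies on exactly one osculating plane. Every $K_{\rho,\gamma}$ lies off $\C$ because $\Ls_\rho$ is external to the cubic (Lemma \ref{lem3:l0infInEnG}). For $\gamma\in\F_q^*$, Lemma \ref{lem82:cubEq} shows that $K_{\rho,\gamma}\in\pi_\T{osc}(t)$ with $t\in\F_q$ if and only if $F_{\rho,\gamma}(t)=0$; moreover $t=0$ is never a root, since $F_{\rho,\gamma}(0)=-\rho\ne0$, and $K_{\rho,\gamma}\notin\pi_\T{osc}(\infty)$ by Lemma \ref{lem3:pi0inf}. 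Hence the number of osculating planes through $K_{\rho,\gamma}$ equals the number of roots of $F_{\rho,\gamma}(t)$ in $\F_q^*$, so $K_{\rho,\gamma}$ is a $1_\Gamma$-point exactly for the $\Nb_1(\rho)$ values of $\gamma$ giving a single such root.

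It then remains to inspect the two endpoints $K_{\rho,\infty}$ and $K_{\rho,0}$, which are excluded from the count $\Nb_m(\rho)$ (defined only over $\gamma\in\F_q^*$). By Lemma \ref{lem3:pi0inf} the point $K_{\rho,\infty}$ lies on exactly two osculating planes, hence is a $\Tr$-point and never a $1_\Gamma$-point. The same lemma shows $K_{\rho,0}$ is a $1_\Gamma$-point if and only if $\rho$ is a cube and $q\equiv-1\pmod3$. Thus for $q\equiv1\pmod3$ the point $K_{\rho,0}$ contributes nothing and $\Pb_{1_\Gamma}=\Nb_1(\rho)=\Nk_{q,\rho}$ by Lemma \ref{lem81:N1}(ii); whereas for $q\equiv-1\pmod3$ every $\rho$ is a cube, so $K_{\rho,0}$ adds one extra $1_\Gamma$-point and $\Pb_{1_\Gamma}=\Nb_1(\rho)+1=(q-3)/2+1=(q-1)/2$ by Lemma \ref{lem81:N1}(i).

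The only genuine care in this argument is the bookkeeping at $\gamma\in\{0,\infty\}$: because $\Nb_m(\rho)$ ranges only over $\gamma\in\F_q^*$, these two endpoints must be handled separately, and one must confirm that the type of $K_{\rho,0}$ is indeed governed by the cube-versus-non-cube status of $\rho$ together with the residue of $q$ modulo $3$, exactly as recorded in Lemma \ref{lem3:pi0inf}. No further calculation is needed, since the counts $\Nb_1(\rho)$ have already been evaluated in Lemma \ref{lem81:N1}.
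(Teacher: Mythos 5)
Your proof is correct and follows essentially the same route as the paper: $\Pb_{\Tr}$ is read off from Theorem \ref{th5:Lrho&tang} via $\mathfrak{n}_q(\rho)$, and $\Pb_{1_\Gamma}$ is obtained by combining Lemma \ref{lem82:cubEq} with the count $\Nb_1(\rho)$ of Lemma \ref{lem81:N1}, handling the two special points $K_{\rho,0}$ and $K_{\rho,\infty}$ separately through Lemma \ref{lem3:pi0inf}. Your write-up is in fact slightly more careful than the paper's (e.g., noting $F_{\rho,\gamma}(0)=-\rho\ne0$ and citing Lemma \ref{lem4:orb the same}(ii) explicitly), but the underlying argument is identical.
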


\begin{proof}
The values of $\Pb_{\Tr}$ are taken from Theorem \ref{th5:Lrho&tang}.

By Lemma \ref{lem82:cubEq}, if, for a fixed $\gamma$,  the equation $F_{\rho,\gamma}(t)$ \eqref{eq81:cubEqodd} has exactly $m$ distinct solutions $t$ in $\F_q$ then the point $K_{\rho,\gamma}$, $\gamma\in\F_q^*$,  belongs to exactly $m$ distinct osculating planes. So, the set $\Ls_{\rho}\setminus\{K_{\rho,0},K_{\rho,\infty}\}$ contains $\Nb_1(\rho)$ points belonging to exactly $1$ osculating plane.
 Also, by Lemma \ref{lem3:pi0inf}, $K_{\rho,\infty}$ belongs to exactly two osculating planes, $K_{\rho,0}$ is a $1_\Gamma$-point if and only if $q\equiv-1\pmod3$. For $\Nb_{1}(\rho)$ we apply Lemma \ref{lem81:N1}.
\end{proof}

 \begin{theorem}\label{th82:incidOdd}
 Let $q$ be odd. Let $\Nk_{q,\rho}$ be as in \eqref{eq81:Nk}. Let the orbit $\Ob_\rho$ be  generated by a line $\Ls_\rho$. Then, for the point-line incidence
matrix corresponding to the orbit the following holds:

     Let $q\equiv-1\pmod3$. Then $\#\Ob_\rho=(q^3-q)/2$ for all $\rho$ and we have
   \begin{align}\label{eq82:incid-1}
 &  \Pb_{\Tr}=2,~\Lb_{\Tr}=q-1;~\Pb_{1_\Gamma}=\Lb_{1_\Gamma}=\frac{q-1}{2};
\db\\
 &\Pb_{3_\Gamma}=
 \frac{q-5}{6},~\Lb_{3_\Gamma}=\frac{q-5}{2};~\Pb_{0_\Gamma}=\frac{q+1}{3},~\Lb_{0_\Gamma}=\frac{q+1}{2}.\notag
      \end{align}

    Let $q\equiv1\pmod3$. Let $-2\rho$ be a non-cube in $\F_q$. Then $\#\Ob_\rho=(q^3-q)/3$  and
   \begin{align}\label{eq82:incid=1 -2rho non-cube}
 &  \Pb_{\Tr}=1,~\Lb_{\Tr}=\frac{q-1}{3};~\Pb_{1_\Gamma}=\Nk_{q,\rho},~\Lb_{1_\Gamma}=\frac{2}{3}\Nk_{q,\rho};~
\Pb_{3_\Gamma}=
 \frac{q-1-\Nk_{q,\rho}}{3},\db\\
 &\Lb_{3_\Gamma}= \frac{2(q-1-\Nk_{q,\rho})}{3};~\Pb_{0_\Gamma}=\Lb_{0_\Gamma}=\frac{2q+1-2\Nk_{q,\rho}}{3}.\notag
      \end{align}

     Let $q\equiv1\pmod3$. Let $-2\rho$ be a cube in $\F_q$. Then $\#\Ob_\rho=(q^3-q)/12$  and
   \begin{align}\label{eq82:incid=1 -2rho cube}
 &  \Pb_{\Tr}=4,~\Lb_{\Tr}=\frac{q-1}{3};~\Pb_{1_\Gamma}=\Nk_{q,\rho},~\Lb_{1_\Gamma}=\frac{1}{6}\Nk_{q,\rho};~\Pb_{3_\Gamma}=
 \frac{q-7-\Nk_{q,\rho}}{3},\db\\
 &\Lb_{3_\Gamma}= \frac{q-7-\Nk_{q,\rho}}{6};~\Pb_{0_\Gamma}=\frac{2(q-1-\Nk_{q,\rho})}{3},~\Lb_{0_\Gamma}=\frac{q-1-\Nk_{q,\rho}}{6}.\notag
      \end{align}

The plane-line incidence matrix contains, according to \eqref{eq4:=0}--\eqref{eq4:Lb=Lambda}, the same values of the point-line incidence matrix, but in this
case they refer to $\Pi_\pi, \Lambda_\pi$ instead of $\Pb_\pk, \Lb_\pk$.
\end{theorem}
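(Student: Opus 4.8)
The plan is to treat this theorem as a bookkeeping assembly of the counting already carried out in Theorem \ref{th82:PT P1G}, combined with the general incidence identities of Section \ref{sec:useful}. The genuinely substantive input---the enumeration of osculating planes through the non-special points of $\Ls_\rho$ via the cubic $F_{\rho,\gamma}(t)$ of \eqref{eq81:cubEqodd}, together with the behaviour of $K_{\rho,0}$ and $K_{\rho,\infty}$---is already packaged in the values of $\Pb_{\Tr}$ and $\Pb_{1_\Gamma}$ supplied by Theorem \ref{th82:PT P1G}. So I would begin by recording, in each of the three cases, the pair $(\Pb_{\Tr},\Pb_{1_\Gamma})$: namely $(2,(q-1)/2)$ for $q\equiv-1\pmod3$; $(1,\Nk_{q,\rho})$ for $q\equiv1\pmod3$ with $-2\rho$ a non-cube; and $(4,\Nk_{q,\rho})$ for $q\equiv1\pmod3$ with $-2\rho$ a cube. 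Here $\Pb_{1_\Gamma}=\Nb_1(\rho)$ (or $\Nb_1(\rho)+1$) is converted to the closed form via Lemma \ref{lem81:N1}.

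Next I would feed these into the two identities \eqref{eq4:EnGorbit3}, which express $\Pb_{3_\Gamma}=(q+1-\Pb_{1_\Gamma}-2\Pb_{\Tr})/3$ and $\Pb_{0_\Gamma}=\Pb_{\Tr}+2\Pb_{3_\Gamma}$ purely in terms of the quantities just fixed. A short substitution then yields the stated $\Pb_{3_\Gamma}$ and $\Pb_{0_\Gamma}$ in each case; for instance, when $q\equiv-1\pmod3$ one gets $\Pb_{3_\Gamma}=(q-5)/6$ and hence $\Pb_{0_\Gamma}=(q+1)/3$, while the two $q\equiv1\pmod3$ cases are entirely analogous with $\Nk_{q,\rho}$ carried through unchanged. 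To pass from the point counts $\Pb_\pk$ to the line counts $\Lb_\pk$, I would then invoke $\Lb_\pk=\Pb_\pk\cdot\#\Ob_\rho/\#\M_\pk$ from \eqref{eq4:obtainLamb1}, drawing the orbit size $\#\Ob_\rho$ from Theorem \ref{th6:main} (respectively $(q^3-q)/2$, $(q^3-q)/3$, $(q^3-q)/12$) and the point-orbit sizes from Theorem \ref{th21:Hirs}(ii) via \eqref{eq21:sizeMj=Nj}, i.e. $\#\M_\Tr=q^2+q$, $\#\M_{3_\Gamma}=(q^3-q)/6$, $\#\M_{1_\Gamma}=(q^3-q)/2$, $\#\M_{0_\Gamma}=(q^3-q)/3$. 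The common factor $q^3-q$ cancels cleanly in every quotient, leaving the closed forms stated. Finally, the claim about the plane-line incidence matrix is immediate from Theorem \ref{th4:the same}, which transfers each $(\Pb_\pk,\Lb_\pk)$ to the corresponding $(\Pi_\pi,\Lambda_\pi)$ (this rests on $\Ls_\rho=\Ls_\rho\A$ from Lemma \ref{lem3:LsU=Ls} and Proposition \ref{prop4:Pb=Pirho}).

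There is no conceptual obstacle here: once Theorem \ref{th82:PT P1G} is in hand the argument is pure arithmetic. The only point demanding care is the three-way case split and checking that all the divisions by $3$, $6$, and $12$ return the asserted integers consistently with \eqref{eq4:EnGorbit3}. I would give closest scrutiny to the most error-prone case, $q\equiv1\pmod3$ with $-2\rho$ a cube, where $\#\Ob_\rho=(q^3-q)/12$ forces the denominator $6$ in $\Lb_{1_\Gamma}$, $\Lb_{3_\Gamma}$, and $\Lb_{0_\Gamma}$, and I would confirm that the free parameter $\Nk_{q,\rho}$ propagates identically through both the point and the line columns, so that the final expressions are internally consistent.
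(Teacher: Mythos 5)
Your proposal is correct and follows essentially the same route as the paper's own proof: take $\Pb_{\Tr}$ and $\Pb_{1_\Gamma}$ from Theorem \ref{th82:PT P1G}, derive $\Pb_{3_\Gamma}$ and $\Pb_{0_\Gamma}$ via \eqref{eq4:EnGorbit3}, compute the $\Lb_\pk$ from \eqref{eq4:obtainLamb1} using the orbit sizes of Theorem \ref{th6:main} and the point-orbit sizes of Theorem \ref{th21:Hirs}, and conclude the plane-line statement from Theorem \ref{th4:the same}. All the case-by-case arithmetic you indicate checks out against the stated formulas.
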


\begin{proof}
The sizes $\#\Ob_\rho$ are taken from Theorem \ref{th6:main}. For all the cases \eqref{eq82:incid-1}--\eqref{eq82:incid=1 -2rho cube}, in the beginning, we take the values $\Pb_\Tr$ and $\Pb_{1_\Gamma}$ from Theorem \ref{th82:PT P1G}. Then, by \eqref{eq4:EnGorbit3}, we obtain $\Pb_{3_\Gamma}$ and $\Pb_{0_\Gamma}$.  Finally, we apply \eqref{eq4:obtainLamb1} to calculate $\Lb_\pk$. We take the sizes  $\#\M_\pk$, $\pk\in\Mk\setminus\{\C\}$, from Theorem \ref{th21:Hirs}(ii)(iii). The last assertion follows from Theorem \ref{th4:the same}.
\end{proof}

\begin{corollary}
 Let $q\equiv1\pmod3$ be odd. Let $\Nk_{q,\rho}$ be as in \eqref{eq81:Nk}. Then $3|\Nk_{q,\rho}$. Moreover, $6|\Nk_{q,\rho}$  if  $-2\rho$ is a cube in $\F_q$. 
\end{corollary}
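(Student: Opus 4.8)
The plan is to obtain both divisibilities essentially for free from the integral incidence counts already computed in Theorem~\ref{th82:incidOdd}, after first isolating the factor $3$ by an elementary fibre argument directly from the definition \eqref{eq81:Nk}.

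First I would make the factor $3$ transparent. Since $q\equiv1\pmod3$, the cubing map $\gamma\mapsto\gamma^3$ on $\F_q^*$ is $3$-to-$1$: its fibres are the cosets $\gamma\cdot\{1,\omega,\omega^2\}$, where $\omega$ is a primitive cube root of unity in $\F_q$. The expression $1+4\rho^{-1}\gamma^3$, and hence its quadratic character $\eta(1+4\rho^{-1}\gamma^3)$, depends only on $\gamma^3$, so it is constant on each fibre. Therefore the set counted by $\Nk_{q,\rho}$ is a disjoint union of $3$-element fibres, whence $\Nk_{q,\rho}=3M$ with $M=\#\{u\in\F_q^*\mid\eta(1+4\rho^{-1}u)=-1,~u\T{ a cube}\}$. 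In particular $3\mid\Nk_{q,\rho}$ for every $\rho$, which is the first assertion.

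For the refinement I would invoke the integrality of the incidence entries. By definition, $\Lb_{1_\Gamma}$ is the number of lines of the orbit $\Ob_\rho$ through a fixed $1_\Gamma$-point, hence a nonnegative integer. Theorem~\ref{th82:incidOdd} evaluates it as $\Lb_{1_\Gamma}=\tfrac23\Nk_{q,\rho}$ when $-2\rho$ is a non-cube, see \eqref{eq82:incid=1 -2rho non-cube}, and as $\Lb_{1_\Gamma}=\tfrac16\Nk_{q,\rho}$ when $-2\rho$ is a cube, see \eqref{eq82:incid=1 -2rho cube}. Integrality of $\tfrac16\Nk_{q,\rho}$ forces $6\mid\Nk_{q,\rho}$ in the cube case, which is the ``moreover'' part; since the two cases exhaust $\F_q^*$ and $6\mid\Nk_{q,\rho}$ implies $3\mid\Nk_{q,\rho}$, this is consistent with the first assertion (and integrality of $\tfrac23\Nk_{q,\rho}$, with $\gcd(2,3)=1$, independently reconfirms $3\mid\Nk_{q,\rho}$ in the non-cube case).

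I expect no real obstacle, since the corollary is in effect a consistency condition imposed on the integer entries of the incidence matrix of Theorem~\ref{th82:incidOdd}. The only genuinely hard point would arise if one insisted on a proof independent of that incidence computation: there the extra factor of $2$ in the cube case amounts to the parity $2\mid M$. Writing $-2\rho=c^3$ gives $4\rho^{-1}=(-2/c)^3$, so $4\rho^{-1}$ is itself a cube and $M=\#\{w\in\F_q^*\mid\eta(1+w)=-1,~w\T{ a cube}\}$; establishing $2\mid M$ directly would require controlling the cubic character sum $\sum_{w\T{ cube}}\eta(1+w)$, and it is precisely this that the integrality route lets us avoid.
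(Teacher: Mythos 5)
Your proof is correct, and it differs from the paper's in an instructive way on the first assertion while coinciding with it on the second. The paper proves both divisibilities at once from integrality of the incidence entries of Theorem \ref{th82:incidOdd}: since $\Lb_{1_\Gamma}$ is a count of lines through a point, $\Lb_{1_\Gamma}=\frac{2}{3}\Nk_{q,\rho}$ in \eqref{eq82:incid=1 -2rho non-cube} forces $3\mid\Nk_{q,\rho}$ (as $\gcd(2,3)=1$), and $\Lb_{1_\Gamma}=\frac{1}{6}\Nk_{q,\rho}$ in \eqref{eq82:incid=1 -2rho cube} forces $6\mid\Nk_{q,\rho}$; the two cases together cover all $\rho\in\F_q^*$. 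You instead obtain $3\mid\Nk_{q,\rho}$ by an elementary fibre argument: for $q\equiv1\pmod3$ the cubing map is $3$-to-$1$ on $\F_q^*$ and the condition $\eta(1+4\rho^{-1}\gamma^3)=-1$ depends only on $\gamma^3$, so the counted set is a disjoint union of $3$-element cosets of the cube roots of unity; only for the factor $6$ do you fall back on the paper's integrality route. What each approach buys: yours makes the divisibility by $3$ completely independent of the orbit and incidence machinery (so it holds for every $\rho$ with no case distinction and doubles as a consistency check on Theorem \ref{th82:incidOdd}), and it isolates exactly what is genuinely geometric in the corollary, namely the parity of $M=\frac{1}{3}\Nk_{q,\rho}$ when $-2\rho$ is a cube; the paper's argument is shorter and uniform, treating both divisibilities as immediate by-products of the incidence computation. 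Your closing observation is also accurate: a direct, purely arithmetic proof of that parity would amount to evaluating a mixed quadratic--cubic character sum over $\F_q$, and it is precisely this that the integrality argument sidesteps.
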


\begin{proof}
  As the value $\Lb_{1_\Gamma}$ must be an integer, the assertions follow from \eqref{eq82:incid=1 -2rho non-cube} and \eqref{eq82:incid=1 -2rho cube}.
\end{proof}

\section{Orbits $\boldsymbol{\Ob_\rho}$, odd $\boldsymbol{q}$}\label{sec10:orbits odd q}

\begin{lemma}\label{lem82:a}
Let $q\equiv1\pmod3$ be odd. Let $\Rk_m$ be as in \eqref{eq4:Rmdefin}.
  \begin{description}
    \item[(i)] The values of $\rho\in\F_q^*$ can be partitioned into three classes $\Rk_0,\Rk_1,\Rk_2$ such that
    \begin{align*}
    &  \#\Rk_m=\frac{q-1}{3},~m=0,1,2.
    \end{align*}

    \item[(ii)] Let $\log(-2)\equiv\psi\pmod3$, $\psi\in\{0,1,2\}$.
    We have the following.

     There exist two classes $\Rk_m$ such that $-2\rho$ is a non-cube in $\F_q$ for $\rho\in\Rk_m$:
    \begin{align*}
      &\Rk_1,\Rk_2  \T{ if }\psi=0; ~\Rk_0,\Rk_1   \T{ if }\psi=1 ;~\Rk_0,\Rk_2   \T{ if }\psi=2.
    \end{align*}

     There exists one class  $\Rk_m$ such that $-2\rho$ is a cube in $\F_q$  for $\rho\in\Rk_m$:
    \begin{align*}\label{eq82:-2cube}
      &\Rk_0 \T{ if }\psi=0; ~\Rk_2   \T{ if }\psi=1 ;~\Rk_1   \T{ if }\psi=2.
    \end{align*}
  \end{description}
\end{lemma}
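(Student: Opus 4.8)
The plan is to reduce everything to the additivity of the discrete logarithm modulo $q-1$ together with Lemma \ref{lem82:logofcubes}.

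For part (i), I would first observe that since $q\equiv1\pmod3$ we have $3\mid(q-1)$. The map $\rho\mapsto\log\rho$ is a bijection from $\F_q^*$ onto $\{0,1,\dots,q-2\}$. Because $q-1$ is a multiple of $3$, the integers $0,1,\dots,q-2$ split evenly into the three residue classes modulo $3$, each containing exactly $(q-1)/3$ of them. Hence the preimages $\Rk_0,\Rk_1,\Rk_2$ defined in \eqref{eq4:Rmdefin} partition $\F_q^*$ into three classes of size $(q-1)/3$ each, which is the claim.

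For part (ii), the key relation is that for $\rho\in\Rk_m$ one has $\log(-2\rho)=\log(-2)+\log\rho\pmod{q-1}$, and since $3\mid(q-1)$ this reduces to $\log(-2\rho)\equiv\psi+m\pmod3$. By Lemma \ref{lem82:logofcubes}, $-2\rho$ is a cube in $\F_q$ if and only if $\log(-2\rho)\equiv0\pmod3$, that is, if and only if $m\equiv-\psi\pmod3$. Running through the three values of $\psi$ gives: for $\psi=0$ the cube case is $m\equiv0$ (class $\Rk_0$); for $\psi=1$ it is $m\equiv2$ (class $\Rk_2$); for $\psi=2$ it is $m\equiv1$ (class $\Rk_1$); in each case the remaining two classes yield non-cubes. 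This matches the stated lists exactly.

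There is no serious obstacle here: the whole argument rests on the additivity of $\log$ modulo $q-1$ and on the fact that $3\mid(q-1)$ lets one pass cleanly to residues modulo $3$. The only point requiring minimal care is to note that $-2\neq0$, so that $\log(-2)$ and hence $\psi$ are well defined, which holds since $q$ is odd.
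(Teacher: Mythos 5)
Your proof is correct and follows exactly the route the paper intends: part (i) from the equidistribution of residues of $\log\rho$ modulo $3$ when $3\mid(q-1)$, and part (ii) from the additivity of $\log$ combined with Lemma \ref{lem82:logofcubes}, which is precisely the lemma the paper's own (very terse) proof invokes. You have simply written out the details that the paper declares ``obvious.''
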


\begin{proof}
  The case (i) is obvious. The case (ii) follows from Lemma \ref{lem82:logofcubes}.
\end{proof}

\begin{lemma}\label{lem82:equivsamelog}
Let $q$ be odd. Let $\rho_1, \rho_2 \in \F_q^*$. If $\log \rho_1 \equiv \log \rho_2 \pmod3$, i.e. $\rho_1, \rho_2$ belong to the same class $\Rk_m$, then the lines   $\Ls_{\rho_1}$, $\Ls_{\rho_2}$ belong to the same orbit.
\end{lemma}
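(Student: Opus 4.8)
The plan is to exhibit an explicit projectivity $\Psi\in G_q$ carrying $\Ls_{\rho_1}$ onto $\Ls_{\rho_2}$, constructed from a diagonal matrix of the type $\MM^\infty$ in \eqref{eq6:MMinf}. The hypothesis $\log\rho_1\equiv\log\rho_2\pmod3$ means $\rho_1/\rho_2$ has discrete logarithm divisible by $3$; the key observation is that this is exactly the condition under which $\rho_1/\rho_2$ is a cube in $\F_q^*$. First I would establish this. For $q\equiv1\pmod3$ this is precisely Lemma \ref{lem82:logofcubes}. For $q\equiv-1\pmod3$ every element of $\F_q^*$ is a cube (by Lemma \ref{lem6:CubicRoots}, the cubing map is a bijection), so $\rho_1/\rho_2$ is trivially a cube and the hypothesis on logarithms is automatically satisfied; in that case the statement also follows directly from Lemma \ref{lem82:UniqueOrbit_qm1}, which already shows all $\Ls_\rho$ lie in a single orbit. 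Thus the substantive case is $q\equiv1\pmod3$.

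Next I would write down the map. Choose $d\in\F_q^*$ with $d^3=\rho_1/\rho_2$, which exists by the cube condition just established, and let $\Psi$ be the projectivity with matrix
\begin{equation*}
 \MM=\left[
 \begin{array}{cccc}
 1&0&0&0\\
 0&d&0&0\\
 0&0&d^2&0\\
 0&0&0&d^3
 \end{array}
 \right].
\end{equation*}
This is a genuine element of $G_q$ since it is of the form \eqref{eq6:MMinf} (equivalently, the form \eqref{eq21:M} with $a=1$, $b=c=0$, so $ad-bc=d^3\ne0$). I then compute the images of the two defining points of $\Ls_{\rho_1}$ exactly as in the proof of Lemma \ref{lem82:UniqueOrbit_qm1}: one finds $[0,0,1,0]\times\MM=[0,0,d^2,0]$, which represents $\Pf(0,0,1,0)=K_{\rho_2,\infty}$, and $[\rho_1,0,0,1]\times\MM=[\rho_1,0,0,d^3]=[\rho_1,0,0,\rho_1/\rho_2]$, which upon scaling by $\rho_2/\rho_1$ represents $\Pf(\rho_2,0,0,1)=K_{\rho_2,0}$.

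Since a projectivity sends a line to the line through the images of two of its points, $\Ls_{\rho_1}\Psi$ is the line through $K_{\rho_2,\infty}$ and $K_{\rho_2,0}$, which by \eqref{eq3:Lrho} is exactly $\Ls_{\rho_2}$. Hence $\Ls_{\rho_1}$ and $\Ls_{\rho_2}$ lie in the same $G_q$-orbit, as claimed. I do not anticipate a genuine obstacle here: the construction is a direct generalization of Lemma \ref{lem82:UniqueOrbit_qm1}, and the only point requiring care is the translation between the logarithmic condition and cubehood, which is handled uniformly by Lemma \ref{lem82:logofcubes} in the $q\equiv1\pmod3$ case and by the bijectivity of cubing in the $q\equiv-1\pmod3$ case. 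The mildest subtlety is simply being explicit that the cube root $d$ exists under the hypothesis, so that $\MM$ is well defined.
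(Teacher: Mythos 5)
Your proposal is correct and is essentially the paper's own proof: the same diagonal matrix of type \eqref{eq6:MMinf} with $d^3=\rho_1/\rho_2$, the same computation on the two defining points $K_{\rho_1,\infty}$ and $K_{\rho_1,0}$, and the same conclusion. Your only addition is a more explicit justification that $\rho_1/\rho_2$ is a cube (via Lemma \ref{lem82:logofcubes} for $q\equiv1\pmod3$ and the bijectivity of cubing for $q\equiv-1\pmod3$), a point the paper passes over quickly.
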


\begin{proof}
Consider $\rho_1/\rho_2$: it is a cube as $\log(\rho_1/\rho_2) = \log\rho_1 - \log\rho_2$. By hypothesis, $(\log\rho_1 - \log\rho_2)\pmod 3=0$.
Let $d \in\F_q^*$ such that $d^3 = \rho_1/\rho_2$, let
\begin{equation*}
   \MM=\left[
 \begin{array}{cccc}
 1&0&0&0\\
 0&d&0&0\\
 0&0&d^2&0\\
 0&0&0&d^3
 \end{array}
  \right]
\end{equation*}
and let  $\Psi$ be the projectivity corresponding to   $\MM$. Then
\begin{equation*}
[0,0,1,0]\times\MM=[0,0,d^2,0];~[\rho_1,0,0,1]\times\MM=[\rho_1,0,0,d^3]=[1,0,0,d^3/\rho_1]=[1,0,0,1/\rho_2].
\end{equation*}
As $\Pf(0,0,d^2,0)=\Pf(0,0,1,0)$ and  $\Pf(1,0,0,1/\rho_2)=\Pf(\rho_2,0,0,1)$, it means that $\Ls_{\rho_1} \Psi= \Ls_{\rho_2}$.
\end{proof}

\begin{lemma}\label{lem82:two orbits}
Let $q \equiv1\pmod3$ be odd.
  \begin{description}
    \item[(i)]
Let $\Rk_{m'}$ and $\Rk_{m''}$ be the two classes of values of $\rho$ such that, in accordance with Lemma \emph{\ref{lem82:a}(ii)}, $-2\rho$ is a non-cube in $\F_q$ for $\rho\in\Rk_{m'}\cup\Rk_{m''}$. Let $\rho'\in \Rk_{m'}$, $\rho''\in \Rk_{m''}$. Then the lines $\Ls_{\rho'}$ and $\Ls_{\rho''}$ generate two distinct $\frac{1}{3}(q^3-q)$-orbits $\Ob_{\rho'}$ and $\Ob_{\rho''}$, respectively, every of which contains $(q-1)/3$ lines $\Ls_\rho$ with $\rho$ belonging to the same class $\Rk_{m^\bullet}$.
    \item[(ii)]
Let $\Rk_{m'''}$  be the class of values of $\rho$ such that, in accordance with Lemma \emph{\ref{lem82:a}(ii)}, $-2\rho$ is a cube in $\F_q$ for $\rho\in\Rk_{m'''}$. Let $\rho'''\in \Rk_{m'''}$. Then $m'''\notin\{m',m''\}$ and the line $\Ls_{\rho'''}$ generates the $\frac{1}{12}(q^3-q)$-orbit $\Ob_{\rho'''}$, containing $(q-1)/3$ lines $\Ls_\rho$with $\rho\in\Rk_{m'''}$.
  \end{description}
\end{lemma}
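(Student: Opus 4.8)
The plan is to assemble the statement from three ingredients already in place: Lemma~\ref{lem82:equivsamelog} (lines $\Ls_\rho$ with $\rho$ in one class $\Rk_m$ all lie in a single orbit), Lemma~\ref{lem82:a} (the three classes have equal size $(q-1)/3$, exactly two of them, $\Rk_{m'}$ and $\Rk_{m''}$, giving a non-cube $-2\rho$ and the third, $\Rk_{m'''}$, a cube), and Theorem~\ref{th6:main} (orbit sizes $(q^3-q)/3$ when $-2\rho$ is a non-cube and $(q^3-q)/12$ when it is a cube). The only genuinely new point is the \emph{distinctness} of the two orbits in part~(i); all the counting assertions and part~(ii) then follow by size comparison.

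For part~(i) I would first record the lower count: by Lemma~\ref{lem82:equivsamelog} every $\Ls_\rho$ with $\rho\in\Rk_{m'}$ lies in $\Ob_{\rho'}$, and since distinct $\rho$ give distinct lines (their coordinate vectors $(0,\rho,0,0,0,-1)$ differ by \eqref{eq3:cordVecElrho}) and $\#\Rk_{m'}=(q-1)/3$, the orbit contains at least $(q-1)/3$ such lines; likewise for $\Ob_{\rho''}$. Both orbits have size $(q^3-q)/3$ by Theorem~\ref{th6:main}(i). The core step is $\Ob_{\rho'}\ne\Ob_{\rho''}$ when $m'\ne m''$, which I would prove by contradiction. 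Suppose $\Psi\in G_q$ satisfies $\Ls_{\rho'}\Psi=\Ls_{\rho''}$. Since $-2\rho'$ and $-2\rho''$ are non-cubes, Theorem~\ref{th5:Lrho&tang}(iii) shows each line carries exactly one $\Tr$-point, namely $K_{\rho',\infty}$ and $K_{\rho'',\infty}$; as $\Psi$ permutes the point orbit $\M_\Tr$ and maps $\Ls_{\rho'}$ onto $\Ls_{\rho''}$, it must send $K_{\rho',\infty}$ to $K_{\rho'',\infty}$, i.e. $\Psi$ fixes $\Pf(0,0,1,0)$. By Lemma~\ref{lem6:stabKinf} its matrix is then $\MM^\infty=\mathrm{diag}(1,d,d^2,d^3)$. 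Computing $[\rho',0,0,1]\,\MM^\infty=[\rho',0,0,d^3]$ and forcing this image to be the remaining point $K_{\rho'',\gamma}$ of $\Ls_{\rho''}$ (note $\gamma\in\F_q$ since $K_{\rho',0}\ne K_{\rho',\infty}$ and $\Psi$ is injective) yields $\gamma=0$ and $d^3=\rho'/\rho''$, so $\rho'/\rho''$ is a cube. By Lemma~\ref{lem82:logofcubes} this gives $\log\rho'\equiv\log\rho''\pmod3$, i.e. $m'=m''$, a contradiction. This rules out any $\Ls_\rho$ with $\rho\in\Rk_{m''}$ from $\Ob_{\rho'}$; lines with $\rho\in\Rk_{m'''}$ are excluded because their orbits have the different size $(q^3-q)/12$. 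Hence $\Ob_{\rho'}$ contains exactly the $(q-1)/3$ lines $\Ls_\rho$ with $\rho\in\Rk_{m'}$, and symmetrically for $\Ob_{\rho''}$.

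For part~(ii), the relation $m'''\notin\{m',m''\}$ is immediate from Lemma~\ref{lem82:a}(ii), since the three classes partition $\F_q^*$ and $\{m',m''\}$ are precisely those on which $-2\rho$ is a non-cube. Because $-2\rho'''$ is a cube, Theorem~\ref{th6:main}(ii) gives $\#\Ob_{\rho'''}=(q^3-q)/12$, differing from the size $(q^3-q)/3$ of the orbits in part~(i); thus $\Ob_{\rho'''}$ is automatically distinct from $\Ob_{\rho'}$ and $\Ob_{\rho''}$, and no $\Ls_\rho$ with $\rho\in\Rk_{m'}\cup\Rk_{m''}$ can lie in it. Lemma~\ref{lem82:equivsamelog} with $\#\Rk_{m'''}=(q-1)/3$ then shows $\Ob_{\rho'''}$ contains exactly $(q-1)/3$ lines $\Ls_\rho$, all with $\rho\in\Rk_{m'''}$.

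The main obstacle is the distinctness argument in part~(i): unlike the even case (Theorem~\ref{th72:diforbLs}(i)), the off-diagonal entries of the general matrix $\MM$ in \eqref{eq21:M} do not collapse here, so one cannot exclude $K_{\rho',\infty}\Psi=K_{\rho'',\gamma}$ by a short coordinate computation. The right leverage is the invariant count of $\Tr$-points: the non-cube hypothesis forces a \emph{unique} tangent point on each line, which pins $\Psi$ to fix $\Pf(0,0,1,0)$ and reduces it to the diagonal form of Lemma~\ref{lem6:stabKinf}, after which the cube condition on $\rho'/\rho''$ drops out.
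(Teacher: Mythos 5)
Your proof is correct, and its overall scaffolding (Lemma \ref{lem82:equivsamelog} for membership, Lemma \ref{lem82:a} for the class sizes, Theorem \ref{th6:main} for orbit sizes, size comparison for part (ii)) matches the paper's. Where you genuinely diverge is the core distinctness step in part (i). The paper rules out the case $K_{\rho',\infty}\Psi=K_{\rho'',\gamma}$, $\gamma\in\F_q$, by a direct computation with the general matrix \eqref{eq21:M}: from $[0,0,1,0]\times\MM=[\rho'',0,\gamma,1]$ it extracts $ab^2/\rho''=cd^2$, $c+2ad=0$ (with $b=1$), hence $d^3=-1/(2\rho'')$, impossible because $-2\rho''$ is a non-cube. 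You instead invoke Theorem \ref{th5:Lrho&tang}(iii): the non-cube hypothesis makes $K_{\rho,\infty}$ the \emph{unique} $\Tr$-point on each line, and since $\Psi\in G_q$ preserves the orbit $\M_\Tr$, it must fix $\Pf(0,0,1,0)$, which pins its matrix to the diagonal form of Lemma \ref{lem6:stabKinf}; from there both proofs conclude identically ($d^3=\rho'/\rho''$, then Lemma \ref{lem82:logofcubes}). Your invariant-based argument is cleaner and reuses machinery already established, at the cost of depending on Theorem \ref{th5:Lrho&tang}(iii) rather than being self-contained; note that the two uses of the non-cube hypothesis are mathematically equivalent, since that theorem itself rests on $t^4+2\rho t=0$ having only the root $t=0$. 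One small correction: your closing claim that one \emph{cannot} exclude $K_{\rho',\infty}\Psi=K_{\rho'',\gamma}$ by a short coordinate computation is not accurate — the paper does exactly this, and the computation is only slightly longer than in the even case because it needs the non-cube hypothesis to kill $d^3=-1/(2\rho'')$. This misstatement does not affect the validity of your proof.
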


\begin{proof}
  \begin{description}
    \item[(i)]
  We do similarly to Proof of Theorem \ref{th72:diforbLs}. Let $\Psi\in G_q$ be a projectivity such that $\Ls_{\rho'}\Psi =\Ls_{\rho''}$. We consider the case $K_{\rho',\infty} \Psi  =  K_{\rho'',\gamma}$ for some $\gamma\in\F_q$. The general form of the matrix $\MM^\Psi$ corresponding to $\Psi$ is given by \eqref{eq21:M}. We have:
\begin{equation*}
[0,0,1,0]\times\MM^\Psi=[3ab^2,b^2c+2abd,ad^2+2bcd,3cd^2]=[\rho'',0,\gamma,1].
\end{equation*}
This implies $ab^2/\rho''=cd^2$ and $a,b,c,d \neq 0$.
 As $\MM$ is defined up to a factor of proportionality, we can put $b=1$.  From  $a/\rho''=cd^2$ and $c+2ad=0$ we obtain $d^3=-1/2\rho''$, contradiction as $-1/2\rho''$ (together with $-2\rho''$) is not a cube in $\F_q$.

 Thus, $K_{\rho',\infty} \Psi\ne K_{\rho'',\gamma}$ with $\gamma\in\F_q$. The only possible case is $K_{\rho',\infty} \Psi= K_{\rho'',\infty}$, i.e. $\Pf(0,0,1,0)\Psi=\Pf(0,0,\gamma,0)$, see Lemma~\ref{lem6:stabKinf}.
The matrix $\MM^{\Psi}$ must be of the same form as $\MM^{\infty}$ \eqref{eq6:MMinf} but the set of possible values of $d$ can be a proper subset of $\F_q^*$.
We should provide  $K_{\rho',0} \Psi  =  K_{\rho'',\gamma}$ for some $\gamma\in\F_q$, i.e. $[\rho',0,0,1]\MM^{\Psi}=[\rho'',0,\gamma,1]$.
As $ [\rho',0,0,1]\times\MM^{\infty} = [\rho',0,0,d^3]$, it can happen only if $d^3=\rho'/\rho''$. But, $\log(\rho'/\rho'')\not\equiv0\pmod3$ since $m'\ne m''$. So, due to Lemma \ref{lem82:logofcubes},
$\rho'/\rho''$ is not a cube, contradiction. Thus, a projectivity $\Psi\in G_q$ sending  $\Ls_{\rho'}$ to $\Ls_{\rho''}$ does not exist.

Finally, we use Theorem \ref{th6:main}(i) and Lemmas \ref{lem82:a}(i), \ref{lem82:equivsamelog}.
    \item[(ii)]
    We use Theorem \ref{th6:main}(ii) and Lemmas \ref{lem82:a}, \ref{lem82:equivsamelog}. \qedhere
  \end{description}
\end{proof}

\begin{theorem}
 Let $q\equiv1\pmod3$ be odd.
  \begin{description}
    \item[(i)] Let $\rho_1\ne\rho_2$. Then two lines $\Ls_{\rho_1}$ and $\Ls_{\rho_2}$ belong to distinct orbits under $G_q$ if and only if $\log \rho_1\not\equiv\log \rho_2 \pmod3$, i.e. $\rho_1,\rho_2$ belong to distinct classes $\Rk_m$. All $\Ls_\rho$-lines generate three distinct orbits $\Ob_\rho$ every of which contains $(q-1)/3$ $\Ls_\rho$-lines with $\rho$ belonging to the same class $\Rk_m$.

        \item[(ii)] Two orbits $\Ob_{\rho}$, say $\Ob_{\rho}^{(1)}$ and $\Ob_{\rho}^{(2)}$, have size $\frac{1}{3}(q^3-q)$ and are generated by lines $\Ls_{\rho}$ such that $-2\rho$ is a non-cube in $\F_q$, in accordance with Lemma \emph{\ref{lem82:a}(ii)}.  The orbits $\Ob_{\rho}^{(1)}$ and $\Ob_{\rho}^{(2)}$ are different from any orbit~$\Os_\mu$ of \emph{\cite[Section 7]{DMP_OrbLineO6MJOM}}, see also Theorem \emph{\ref{th23:orbitellmu}}.

          \item[(iii)]
            The third orbit $\Ob_{\rho}$, say $\Ob_{\rho}^{(3)}$,  has size $\frac{1}{12}(q^3-q)$ and is generated by a line $\Ls_{\rho}$ such that $-2\rho$ is a cube in $\F_q$, in accordance with Lemma \emph{\ref{lem82:a}(ii)}.

        \item[(iv)] If $q\not\equiv1\pmod{12}$ or $-1/3$ is not a fourth degree  in $\F_q$, i.e. the condition $\Upsilon_{q,\mu}$ \eqref{eq23:Upsilon} does not hold, then the  orbit $\Ob_{\rho}^{(3)}$ is different from any orbit~$\Os_\mu$.

        \item[(v)] If $q\equiv1\pmod{12}$ and $-1/3$ is a fourth degree in $\F_q$, then $\Ob_\rho^{(3)}=\Os_{-1/3}$.
  \end{description}
\end{theorem}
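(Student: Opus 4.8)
The plan is to derive (i)--(iv) by bookkeeping with the orbit sizes and tangent counts already established, and to treat (v) as the one genuine identification.

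For (i), Lemma \ref{lem82:equivsamelog} shows that $\log\rho_1\equiv\log\rho_2\pmod3$ places $\Ls_{\rho_1}$ and $\Ls_{\rho_2}$ in a common orbit, while Lemma \ref{lem82:two orbits} shows that representatives of distinct classes $\Rk_m$ fall into distinct orbits. Since Lemma \ref{lem82:a}(i) partitions $\F_q^*$ into the three equal classes $\Rk_0,\Rk_1,\Rk_2$ of size $(q-1)/3$, this yields exactly three orbits, each consuming the $(q-1)/3$ lines $\Ls_\rho$ with a fixed value of $\log\rho\bmod 3$, and gives the stated equivalence. Sorting these three classes by the cube character of $-2\rho$ via Lemma \ref{lem82:a}(ii) then yields (ii) and (iii): the two classes with $-2\rho$ a non-cube produce orbits of size $(q^3-q)/3$ by Theorem \ref{th6:main}(i), and Theorem \ref{th5:Lrho&tang}(iii) separates them from every $\Os_\mu$ because there $\mathfrak{n}_q(\rho)=1$ whereas $\mathfrak{n}_q(\mu)\in\{0,2,4\}$; the remaining class, with $-2\rho$ a cube, produces the orbit $\Ob_\rho^{(3)}$ of size $(q^3-q)/12$ by Theorem \ref{th6:main}(ii).

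Part (iv) is purely numerical. Scanning the four cases of Theorem \ref{th23:orbitellmu}(i), an orbit $\Os_\mu$ attains the size $(q^3-q)/12$ only when the condition $\Upsilon_{q,\mu}$ of \eqref{eq23:Upsilon} holds, which forces $\mu=-1/3$, $q\equiv1\pmod{12}$, and $-1/3$ a fourth power. If $q\not\equiv1\pmod{12}$ or $-1/3$ is not a fourth power, then $\Upsilon_{q,\mu}$ fails for every admissible $\mu$, so no $\Os_\mu$ has size $(q^3-q)/12$; since $\Ob_\rho^{(3)}$ does, it is distinct from all of them. This simultaneously shows that the fourth-power condition is \emph{necessary} for any coincidence.

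The substance is (v), an equality rather than a separation. First I would check that the tangent data match: for $\Ls_\rho$ with $-2\rho$ a cube, Theorem \ref{th5:Lrho&tang}(iv)(b) gives $\mathfrak{n}_q(\rho)=4$, and for $\mu=-1/3$ a direct computation via Lemma \ref{lem5:nqmu} gives $\Sc=16/3$, $A_++A_-=-2$, and $A_+A_-=-1/3$; under $q\equiv1\pmod{12}$ one has $\eta(3)=\eta(-1)=1$, so $\eta(\Sc)=1$ and $\eta(A_+)=\eta(A_-)$, and using that $-1/3$ is a fourth power one verifies $\eta(A_+)=\eta(A_-)=1$, whence $\mathfrak{n}_q(-1/3)=4$ matches $\mathfrak{n}_q(\rho)=4$. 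With both orbits of size $(q^3-q)/12$, both with stabilizer $\cong\mathbf{A}_4$, and both carrying four $\Tr$-points per line, the two are indistinguishable by the invariants computed so far, so to force equality I would exhibit a projectivity: taking $\MM$ of the form \eqref{eq21:M} and imposing that the base points $\Pf(1,0,1,0)$ and $\Pf(0,-1/3,0,1)$ of $\ell_{-1/3}$ map to points of $\Ls_\rho$ (second coordinate zero, first-to-fourth ratio $\rho$) produces polynomial conditions on $a,b,c,d$ that I would solve, the fourth-power hypothesis guaranteeing the needed roots lie in $\F_q$ and pinning down a $\rho$ in the cube class $\Rk_{m'''}$. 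The main obstacle is precisely this solvability step: the identity $A_+A_-=-1/3$ ties the tangent configuration of $\ell_{-1/3}$ to the value whose fourth-power status appears in $\Upsilon$, and the delicate point is to show that this condition, already necessary by (iv), is also \emph{sufficient} to realize the projective equivalence. A shorter but less self-contained alternative would invoke the classification of $\OO_6$-orbits from \cite[Th. 8.1, Conj. 8.2]{DMP_OrbLineMJOM}, proved in \cite{KaPatPradOrbits}, in which the minimal size $(q^3-q)/12$ is attained by a unique orbit under $\Upsilon$; equality of sizes would then force $\Ob_\rho^{(3)}=\Os_{-1/3}$ at once.
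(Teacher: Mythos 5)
Parts (i)--(iv) of your proposal are correct and essentially coincide with the paper's own argument: (i) via Lemmas \ref{lem82:logofcubes}, \ref{lem82:equivsamelog}, \ref{lem82:two orbits}; (ii)--(iii) via Theorem \ref{th6:main} and Lemma \ref{lem82:a}(ii); (iv) by observing that no case of Theorem \ref{th23:orbitellmu}(i) produces the size $\frac{1}{12}(q^3-q)$ unless $\Upsilon_{q,\mu}$ \eqref{eq23:Upsilon} holds. The one small divergence is in (ii): you separate $\Ob_{\rho}^{(1)},\Ob_{\rho}^{(2)}$ from every $\Os_\mu$ by the tangent counts of Theorem \ref{th5:Lrho&tang}(iii) ($\mathfrak{n}_q(\rho)=1$ versus $\mathfrak{n}_q(\mu)\in\{0,2,4\}$), while the paper compares orbit sizes ($\frac{1}{3}(q^3-q)$ never occurs among the $\Os_\mu$); both are legitimate, and in fact the separation you invoke is already stated as part of Theorem \ref{th5:Lrho&tang}(iii).

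The genuine gap is in (v), which you yourself flag as "the main obstacle." You correctly identify the strategy the paper uses --- build $\Psi\in G_q$ with $\ell_{-1/3}\Psi=\Ls_\rho$ by imposing that $\Pf(1,0,1,0)$ and $\Pf(0,-1/3,0,1)$ are sent to points of $\Ls_\rho$ --- but you stop at "polynomial conditions on $a,b,c,d$ that I would solve." That solvability is the entire mathematical content of (v), and it is precisely where both hypotheses enter: after normalizing $a=1$ and eliminating $c=-2bd/(b^2+1)$ from the resulting equations, the system reduces to the quartic $3b^4+6b^2-1=0$, and the paper shows this quartic splits into four linear factors over $\F_q$ whose roots are explicit combinations of $\gamma$ (with $\gamma^4=-1/3$, the fourth-power hypothesis) and $\iota$ (with $\iota^2=-1$, available since $q\equiv1\pmod{12}$ implies $q\equiv1\pmod 4$); one then needs $d^3=(\iota+1)\gamma^3/\rho=(1-\iota)^3\gamma^3/\delta^3$, solvable because $\delta^3=-2\rho$ is a cube for the class generating $\Ob_\rho^{(3)}$, and finally the nondegeneracy check $ad-bc\ne0$. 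Without carrying this out, the claim $\Ob_\rho^{(3)}=\Os_{-1/3}$ is unproven: matching sizes, stabilizers ($\mathbf{A}_4$) and tangent counts ($\mathfrak{n}_q=4$) are necessary conditions only, as you acknowledge. Your fallback via \cite{KaPatPradOrbits} would close the gap only if the classification proved there asserts that under $\Upsilon_{q,\mu}$ the size $\frac{1}{12}(q^3-q)$ is attained by a \emph{unique} orbit; that uniqueness is asserted by you but not established anywhere in this paper, so as written (v) remains incomplete on either route.
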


\begin{proof}
  \begin{description}
    \item[(i)]  We use Lemmas \ref{lem82:logofcubes}, \ref{lem82:two orbits}.

    \item[(ii)]  The sizes of the orbits $\Ob_{\rho}^{(1)}$, $\Ob_{\rho}^{(2)}$ follow from Theorem \ref{th6:main}(i). By Theorem \ref{th23:orbitellmu}(iii),  the sizes of the orbits $\Ob_\rho^{(1)}$ , $\Ob_\rho^{(2)}$ and the orbits $\Os_\mu$ are distinct.

    \item[(iii)]  We use Theorem \ref{th6:main}(ii) and Lemma \ref{lem82:two orbits}(ii).

    \item[(iv)] By hypothesis and Theorem \ref{th23:orbitellmu}(iii),  the sizes of the orbit $\Ob_\rho^{(3)}$  and the orbits $\Os_\mu$ are distinct.

    \item[(v)]
By hypothesis, there exist $\gamma, \delta \in \F_q^*$ such that $\gamma^4 = -1/3, \delta^3 = -2\rho$. Moreover $q\equiv1\pmod{12}$ implies $q\equiv1\pmod4$, so, by  \cite[Section 1.5(ix)]{Hirs_PGFF}, there exists $\iota \in \F_q^*$ such that $\iota^2 = -1$.
 We construct a projectivity $\Psi\in G_q$ such that $\ell_{-1/3}\Psi =\Ls_{\rho}$. The general form of the matrix $\MM^\Psi$ corresponding to $\Psi$ is given by \eqref{eq21:M}. We have:
\begin{equation*}
[1,0,1,0]\times\MM^\Psi=[a^3 + 3ab^2, a^2c + 2abd + b^2c,   ac^2 + ad^2 + 2bcd,   c^3 + 3cd^2]=[\rho,0,\eta,1].
\end{equation*}
 \begin{equation*}
[0,-1/3,0,1]\times\MM^\Psi=
\end{equation*}
 \begin{equation*}
[-a^2b + b^3,   -1/3a^2d - 2/3abc + b^2d,   -2/3acd - 1/3bc^2 + bd^2 ,  -c^2d+ d^3]=[\rho,0,\eta',1].
\end{equation*}
This implies $a,b,c,d \neq 0$, so we put $a=1$ as $\MM^\Psi$ is defined up to a factor of proportionality. Moreover
\begin{equation}\label{th_equiv_ell_L_1}
c + 2bd + b^2c=0.
\end{equation}
 \begin{equation}\label{th_equiv_ell_L_2}
 -d - 2bc + 3b^2d=0.
\end{equation}
 \begin{equation}\label{th_equiv_ell_L_3}
(1 + 3b^2)/( c^3 + 3cd^2)=
(-b + b^3)/ (-c^2d+ d^3)=\rho.
\end{equation}
By \eqref{th_equiv_ell_L_1}, $c = -2bd/(b^2+1)$. Substituting the value of $c$ in \eqref{th_equiv_ell_L_2} we obtain
 \begin{equation*}
d(3b^4 + 6*b^2 - 1)=3d(b + 1/2(3\iota + 3)\gamma^3 + 1/2(-\iota + 1)\gamma)
(b + 1/2(-3\iota + 3)\gamma^3 + 1/2(\iota + 1)\gamma)
\end{equation*}
 \begin{equation*}
(b + 1/2(3\iota - 3)\gamma^3 + 1/2(-\iota - 1)\gamma)
(b + 1/2(-3\iota - 3)\gamma^3 + 1/2(\iota - 1)\gamma)=0.
\end{equation*}
Therefore we we can put $b= - 1/2(3\iota + 3)\gamma^3 - 1/2(-\iota + 1)\gamma$. Then \eqref{th_equiv_ell_L_3} becomes $(\iota + 1)\gamma^3/d^3=\rho$, whence
$d^3=(\iota + 1)\gamma^3/\rho=-2(\iota + 1)\gamma^3/(-2\rho)=(1-\iota )^3\gamma^3/\delta^3$, so we can put $d=(1-\iota )\gamma/\delta$. Finally, $ad-bc=d-bc\neq 0$. In fact $d-bc=((3\iota + 3)\gamma^3 + (-3\iota + 3)\gamma)/\delta$. If $(\iota + 1)\gamma^2 + -\iota + 1=0$, then  $\gamma^2 = (\iota - 1)/(\iota + 1)=(\iota - 1)^2/((\iota + 1)(\iota - 1))=\iota$, whence $-1/3 = \gamma^4= \iota^2=-1$, contradiction as $q$ is odd.  \qedhere
  \end{description}
\end{proof}

\begin{theorem}\label{th82:q=-1mod3}
Let $q\equiv-1\pmod3$ be odd. Then all $\Ls_\rho$-lines generate the same $\frac{1}{2}(q^3-q)$-orbit $\Ob_1$ that is the orbit $\Os_\Lc$ \emph{\cite[Lemma 3.4(i), Theorem 3.5(iv)]{DMP_OrbLineO6MJOM}}. Moreover, this orbit $\Ob_1$ is different from any orbit $\Os_\mu$ of \emph{\cite[Section 7]{DMP_OrbLineO6MJOM}} except when $q\equiv  -1 \pmod{12}$; in this case the orbit  $\Ob_1$ coincides with the orbit $\Os_{-1/3}$ generated by the line $\ell_{-1/3}$ of \emph{\cite{DMP_OrbLineO6MJOM}}.
\end{theorem}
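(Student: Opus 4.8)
The plan is to deduce the whole statement from the properties of the single line $\Lc=\Ls_1$. First I would note that Lemma~\ref{lem82:UniqueOrbit_qm1} holds for every $q\equiv-1\pmod3$ and already shows that all lines $\Ls_\rho$, $\rho\in\F_q^*$, lie in one orbit, which we call $\Ob_1$; its size $(q^3-q)/2$ comes from Theorem~\ref{th6:main}(iv). Since $\Lc=\Ls_1$ by the remark following \eqref{eq3:Lrho}, the orbit $\Ob_1$ generated by $\Ls_1$ is, by definition, the orbit $\Os_\Lc$ of $\Lc$ studied in \cite{DMP_OrbLineO6MJOM}; the size is consistent with the value recorded for $\xi=-1$, $q$ odd, in Theorem~\ref{th22:orbitell0inf}. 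This settles the first assertion.

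For the comparison with the orbits $\Os_\mu$, I would use the two orbit invariants supplied by the excerpt as a coarse filter. Orbit size eliminates every $\Os_\mu$ with $\mu$ a square: for $q\equiv-1\pmod3$ the condition $\Upsilon_{q,\mu}$ of \eqref{eq23:Upsilon} requires $q\equiv1\pmod{12}$ and so cannot occur, whence by Theorem~\ref{th23:orbitellmu}(i) such orbits have size $(q^3-q)/4\ne\#\Ob_1$. The number of $\Tr$-points is the second invariant: every line of $\Ob_1$ carries exactly two $\Tr$-points by Theorem~\ref{th82:incidOdd}, whereas a line of $\Os_\mu$ carries $\mathfrak{n}_q(\mu)\in\{0,2,4\}$ of them by Theorem~\ref{th23:orbitellmu}(ii) and Lemma~\ref{lem5:nqmu}. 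Hence the only $\Os_\mu$ that could coincide with $\Ob_1$ have $\mu$ a non-square and $\mathfrak{n}_q(\mu)=2$, and these must be examined individually.

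Before anything else I would test the candidate $\mu=-1/3$: here $\Sc=(\mu-1)(9\mu-1)=16/3$, so $\eta(\Sc)=\eta(3)$, and since $-3$ is a non-square for $q\equiv-1\pmod3$ one has $\eta(3)=-\eta(-1)$. Thus $\eta(\Sc)=1$ exactly when $q\equiv-1\pmod{12}$ and $\eta(\Sc)=-1$ when $q\equiv5\pmod{12}$, the latter forcing $\mathfrak{n}_q(-1/3)=0\ne2$ by Lemma~\ref{lem5:nqmu}. This already excludes the coincidence $\Ob_1=\Os_{-1/3}$ for $q\equiv5\pmod{12}$ via the $\Tr$-point invariant, while for $q\equiv-1\pmod{12}$ a short check ($A_+A_-=-1/3$, a non-square, so exactly one of $A_\pm$ is a square) confirms $\mathfrak{n}_q(-1/3)=2$; thus both invariants are consistent with the coincidence claimed for $q\equiv-1\pmod{12}$.

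To settle the remaining non-square candidates, including $\mu=-1/3$ at $q\equiv-1\pmod{12}$, I would invoke the relationship between $\Os_\Lc$ and the $\Os_\mu$ established in \cite[Lemma 3.4(i), Theorem 3.5(iv)]{DMP_OrbLineO6MJOM}: because $\Ob_1=\Os_\Lc$, the desired comparison is exactly the one carried out there, yielding non-coincidence for all $\mu$ unless $q\equiv-1\pmod{12}$, where $\Os_\Lc=\Os_{-1/3}$. A self-contained alternative would mimic part~(v) of the preceding theorem: seek $\Psi\in G_q$ with matrix \eqref{eq21:M} satisfying $\ell_{-1/3}\Psi=\Ls_\rho$, reduce to polynomial conditions on the entries, and show that these are solvable precisely when $\sqrt{-1}$ exists, i.e.\ when $q\equiv3\pmod4$. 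The main obstacle is this surviving range: orbit size and $\Tr$-point count do not separate $\Ob_1$ from every $\Os_\mu$ with $\mathfrak{n}_q(\mu)=2$, so one genuinely needs either the finer point-orbit distribution of $\Os_\mu$ from \cite{DMP_OrbLineO6MJOM} or the explicit projectivity, and the delicate point is proving that such a projectivity exists for no $\mu$ other than $-1/3$ and for no residue of $q$ other than $q\equiv-1\pmod{12}$.
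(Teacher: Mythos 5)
Your proposal is correct and follows essentially the same route as the paper: Lemma~\ref{lem82:UniqueOrbit_qm1} together with Theorem~\ref{th6:main}(iv) and the identification $\Lc=\Ls_1$ give the single orbit $\Ob_1=\Os_\Lc$ of size $(q^3-q)/2$, and the decisive comparison with the orbits $\Os_\mu$ is, in both your proposal and the paper, delegated to the prior work \cite{DMP_OrbLineO6MJOM} (the paper cites its Theorem~7.7 together with the observation that $-1/3$ is a non-square when $q\equiv-1\pmod{12}$, whereas your reference to Lemma~3.4(i)/Theorem~3.5(iv) of that paper points at the results identifying $\Os_\Lc$ and its size rather than at the comparison theorem itself). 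Your filtering by orbit size and $\Tr$-point count is correct and correctly flagged by you as insufficient to settle the surviving non-square cases with $\mathfrak{n}_q(\mu)=2$, which is exactly the point at which the paper, like you, falls back on the earlier paper's comparison result.
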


\begin{proof}
If $q\equiv  -1 \pmod{12}$ then $q\equiv  -1 \pmod{3}$ and by \cite[Section 1.5(xii)]{Hirs_PGFF} $-1/3$ is not a square. Then we use Lemma \ref{lem82:UniqueOrbit_qm1} and\cite[Theorem 7.7]{DMP_OrbLineO6MJOM}.\\
\end{proof}

\textbf {Acknowledgments}
The research of S. Marcugini and F. Pambianco was supported in part by the Italian
National Group for Algebraic and Geometric Structures and their Applications (GNSAGA -
INDAM) (Contract No. U-UFMBAZ-2019-000160, 11.02.2019) and by University of Perugia (Project No. 98751: Strutture Geometriche, Combinatoria e loro Applicazioni, Base Research Fund 2017--2019; Fighting Cybercrime with OSINT, Research Fund 2021). \\

\textbf {Data availability} Not applicable.\\

\textbf  {Conflict of interest} On behalf of all authors, the corresponding author states
that there is no conflict of interest. \\

\end{document}